\newtheorem{theorem}{Theorem}[section]
\newtheorem{lemma}[theorem]{Lemma}
\theoremstyle{definition}
\newtheorem*{definition}{Definition}
\numberwithin{equation}{section}
\numberwithin{figure}{section}
\newcommand{\SL}{\textnormal{\textbf{SL}}_{\mathbf{2}}}
\newcommand{\PATHP}{\mathscr{P}}
\newcommand{\FR}{\textnormal{\textbf{FR}}}
\renewcommand{\leq}{\leqslant}
\renewcommand{\geq}{\geqslant}
\DeclareMathOperator{\sgn}{sgn}
\setlist[enumerate]{leftmargin=20pt,itemsep=0pt,topsep=0pt}
\setlist[enumerate,1]{label=\emph{(\roman*)}}
\renewcommand\section{\@startsection {section}{1}{\z@}%
                                   {-3.5ex \@plus -1ex \@minus -.2ex}%
                                   {1.3ex \@plus.2ex}%
                                   {\normalfont\large\scshape}}
\title{ \vspace{-6ex}\bf \large Classifying $\textnormal{\textbf{SL}}_{\mathbf{2}}$-tilings}
\author{ \vspace{-3ex}\normalsize Ian Short\thanks{
		School of Mathematics and Statistics, The Open University, Milton Keynes, MK7 6AA, United Kingdom.\newline{}
		2010 Mathematics Subject Classification: Primary 05E15; Secondary 11B57.\newline{}
		Key words: frieze, Farey graph, $\text{SL}_2$-tiling.}}
\date{\vspace{-4ex}}
\tikzstyle arrowstyle=[scale=1]
\tikzstyle directed=[postaction={decorate,decoration={markings,
  mark=at position 0.5 with {\arrow[arrowstyle]{{latex}}}}}]
\tikzstyle reverse directed=[postaction={decorate,decoration={markings,
  mark=at position 0.5 with {\arrow[arrowstyle]{{latex reversed}}}}}]
\tikzset{pointer/.style 2 args={draw,fill,single arrow,
    single arrow tip angle=45,
    single arrow head extend=#1,
    single arrow head indent=0pt,
    inner sep=0pt,
    rotate=#2}}
\definecolor{grey}{RGB}{210,210,210}
\definecolor{gammacol}{RGB}{33,120,33}
\definecolor{deltacol}{RGB}{44,90,160}
\definecolor{alphacol}{RGB}{170,0,0}
\definecolor{betacol}{RGB}{171,55,200}
\pgfmathsetmacro{\rad}{1.0}
\pgfmathsetmacro{\radplus}{1.13}
\pgfmathsetmacro{\radplusminus}{1.1}
\def\hgline[#1](#2)(#3:#4:#5)
     \pgfmathsetmacro{\thetaone}{#3}
	\pgfmathsetmacro{\thetatwo}{#4}
	\pgfmathsetmacro{\theta}{(\thetaone+\thetatwo)/2}
	\pgfmathsetmacro{\phi}{abs(\thetaone-\thetatwo)/2}
	\pgfmathsetmacro{\close}{less(abs(\phi-90),0.0001)}
    		\pgfmathsetmacro{\R}{#5*tan(\phi)}
		\pgfmathsetmacro{\startangle}{\thetaone-90}
	     \pgfmathsetmacro{\finishangle}{\startangle+2*\phi-180}
\def\dirhgline[#1](#2,#3,#4)(#5:#6:#7)
     \pgfmathsetmacro{\thetaone}{#5}
	\pgfmathsetmacro{\thetatwo}{#6}
	\pgfmathsetmacro{\theta}{(\thetaone+\thetatwo)/2}
	\pgfmathsetmacro{\phi}{abs(\thetaone-\thetatwo)/2}
	\pgfmathsetmacro{\close}{less(abs(\phi-90),0.0001)}
    		\pgfmathsetmacro{\R}{#7*tan(\phi)}
		\pgfmathsetmacro{\startangle}{\thetaone-90}
	     \pgfmathsetmacro{\finishangle}{\startangle+2*\phi-180}
\def\fareyrecur[#1]#2#3#4#5#6#7{
  \recurdepth=#6
  \ifnum\the\recurdepth>1\relax
    \advance\recurdepth by-1\relax
    \edef\tempnum{\number\numexpr#2+#4\relax}
    \edef\tempden{\number\numexpr#3+#5\relax}
    \pgfmathparse{\tempnum/\tempden}\edef\temp{\pgfmathresult}

    \ifnum #7=0 
      \ifnum\tempnum>0
        \node[below=1pt,scale=1]at({(\temp)},0){$\frac{\tempnum}{\tempden}$};
      \else 
        \ifnum\the\recurdepth>1\relax 
          \edef\abstempnum{\number\numexpr -\tempnum\relax}
          \node[below=1pt,scale=1]at({(\temp)},0){$-\frac{\abstempnum}{\tempden}\phantom{-}$};
        \fi 
      \fi 
      \draw[#1] ({(\temp)},0) arc (180:0:{((#4/#5)-\temp)*0.5});
      \draw[#1] ({(\temp)},0) arc (0:180:{(\temp-(#2/#3))*0.5});
    \fi

    \ifnum #7=1 
      \draw[#1] ({(\temp)},0) arc (180:0:{((#4/#5)-\temp)*0.5});
      \draw[#1] ({(\temp)},0) arc (0:180:{(\temp-(#2/#3))*0.5});
    \fi	

    \ifnum #7=2 
       \pgfmathsetmacro{\angleA}{2*atan(#2/#3)-90}
 	  \pgfmathsetmacro{\angleB}{2*atan(#4/#5)-90}
       \pgfmathsetmacro{\angleC}{2*atan(\temp)-90}
       \hgline[#1](0,0)(\angleA:\angleC:\rad);
       \hgline[#1](0,0)(-\angleC:-\angleA:\rad);
       \hgline[#1](0,0)(180+\angleA:180+\angleC:\rad);
       \hgline[#1](0,0)(180-\angleC:180-\angleA:\rad);
       \hgline[#1](0,0)(\angleC:\angleB:\rad);
       \hgline[#1](0,0)(-\angleB:-\angleC:\rad);
       \hgline[#1](0,0)(180+\angleC:180+\angleB:\rad);
       \hgline[#1](0,0)(180-\angleB:180-\angleC:\rad);
    \fi

    \begingroup
      \edef\ttempup{\noexpand\fareyrecur[#1]{\tempnum}{\tempden}{#4}{#5}{\the\recurdepth}{#7}}
      \edef\ttempdown{\noexpand\fareyrecur[#1]{#2}{#3}{\tempnum}{\tempden}{\the\recurdepth}{#7}}
      \ttempup\ttempdown
    \endgroup
  \fi
}
\def\fareygraph[#1]#2#3#4{
  \draw[#1] ({#2},0) -- ({#3+1},0);
  \foreach \n in {#2,...,#3}{
    \draw[#1] ({\n},0.75) -- ({\n},0);
    \ifnum #4=0 
      \node[below=1pt,scale=1,black] at ({\n},0) {$\frac{\n}{1}$};	
    \fi
    \draw[#1] ({(\n)},0) arc (180:0:0.5);
    \fareyrecur[#1]{\n}{1}{\n+1}{1}{4}{#4}	
  }
  \edef\nplusone{\number\numexpr#3+1\relax}
  \draw[#1] ({\nplusone},0.75) -- ({\nplusone},0) node[below=4pt,scale=1] {$\frac{\nplusone}{1}$};	
}
\def\discfareygraph[#1]#2{
  \draw[#1] (0,0) circle (\rad);
  \hgline[#1](0,0)(0:90:\rad);
  \hgline[#1](0,0)(90:180:\rad);
  \hgline[#1](0,0)(180:270:\rad);
  \hgline[#1](0,0)(270:360:\rad);
  \hgline[#1](0,0)(90:270:\rad);
  \fareyrecur[#1]{0}{1}{1}{1}{#2}{2}
}
\begin{document}

\maketitle

\begin{abstract}
Recently there has been significant progress in classifying integer friezes and $\text{SL}_2$-tilings. Typically, combinatorial methods are employed, involving triangulations of regions and inventive counting techniques. Here we develop a unified approach to such classifications using the tessellation of the hyperbolic plane by ideal triangles induced by the Farey graph. We demonstrate that the geometric, numeric and combinatorial properties of the Farey graph are perfectly suited to classifying tame $\text{SL}_2$-tilings, positive integer $\text{SL}_2$-tilings, and tame integer friezes -- both finite and infinite. In so doing, we obtain geometric analogues of certain known combinatorial models for tilings involving triangulations, and we prove several new results of a similar type too. For instance, we determine those bi-infinite sequences of positive integers that are the quiddity sequence of some positive infinite frieze, and we give a simple combinatorial model for classifying tame integer friezes, which generalises the classical construction of Conway and Coxeter for positive integer friezes. 
\end{abstract}

\section{Introduction}\label{sec1}

The genesis of the subject of $\text{SL}_2$-tilings is Coxeter's work on frieze patterns \cite{Co1971}. To describe frieze patterns, we begin with Coxeter's original motivating example, shown in Figure~\ref{fig1}.

\begin{figure}[ht]
\begingroup
\[
  \vcenter{
  \xymatrix @-0.7pc @!0 {
    && 0 && 0 && 0 && 0 && 0 && 0 && 0 && 0 && 0 && 0 && 0 && \\
        &&& 1 && 1 && 1 && 1 && 1 && 1 && 1 && 1 && 1 && 1 && 1 && \\
    && 1 && 2 && 2 && 3 && 1 && 2 && 4 && 1 && 2 && 2 && 3 &&\\
\raisebox{-4pt}{$\dotsc$}        &&& 1 && 3 && 5 && 2 && 1 && 7 && 3 && 1 && 3 && 5 && 2 && \raisebox{-4pt}{$\dotsc$}  \\
    && 2 && 1 && 7 && 3 && 1 && 3 && 5 && 2 && 1 && 7 && 3 && \\
       &&& 1 && 2 && 4 && 1 && 2 && 2 && 3 && 1 && 2 && 4 && 1 && \\
        && 1 && 1 && 1 && 1 && 1 && 1 && 1 && 1 && 1 && 1 && 1 &&& \\
    &&& 0 && 0 && 0 && 0 && 0 && 0 && 0 && 0 && 0 && 0 && 0 && \\
                        }
          }
\]
\endgroup
\caption{A positive frieze}
\label{fig1}
\end{figure}

This is a bi-infinite strip of integers with bi-infinite rows of zeros at the top and bottom. Any four entries that form a diamond shape 
\[
\arraycolsep=3.6pt
\begin{matrix}
&b &\\
a && d\\
& c &
\end{matrix}
\]
satisfy the unimodular rule $ad-bc=1$. More generally, we define a \emph{positive frieze} to be a finite list of bi-infinite sequences of integers -- which we think of as the rows of an array such as Figure~\ref{fig1} -- where all entries are positive except the first and last rows of zeros, and any diamond of four integers satisfies the unimodular rule. The \emph{order} of the frieze is the number of rows minus one.

Coxeter proved in \cite{Co1971} that every positive frieze is periodic, and later he and Conway \cite{CoCo1973} classified positive friezes using \emph{triangulated polygons}. A triangulated polygon is a convex Euclidean polygon partitioned into triangles by non-crossing diagonals; an example is shown in Figure~\ref{fig2}.

\begin{figure}[ht]
\centering
\begin{tikzpicture}
	\newdimen\R
	\R=1.5cm
	\pgfmathsetmacro{\theta}{360/7}
 	\draw (0:\R) \foreach[count=\i] \lab in {2,2,3,1,2,4,1} {
                -- (-\i*\theta:\R) node[shift=({-\i*\theta:6pt})] {$\lab$}
            }-- cycle (90:\R) ;
     \draw (-\theta:\R) -- (-6*\theta:\R);
     \draw (-6*\theta:\R) -- (-3*\theta:\R);
     \draw (-3*\theta:\R) -- (-5*\theta:\R);
      \draw (-2*\theta:\R) -- (-6*\theta:\R);
\end{tikzpicture}
\caption{The triangulated polygon corresponding to the positive frieze of Figure~\ref{fig1}}
\label{fig2}
\end{figure}

The integers at the vertices of the polygon count the number of triangles in the partition that are incident to each vertex. Reading these integers in a clockwise fashion around the polygon, starting from the rightmost vertex, we obtain the sequence $1,2,2,3,1,2,4$, which is the periodic part of the third row of Figure~\ref{fig1}. Of course, we obtain a cyclic permutation of this triangle-counting sequence by starting from a different vertex. The \emph{quiddity cycle} of a positive frieze is the cyclically-equivalent set of finite sequences that are the periodic part of the third row of the frieze. Conway and Coxeter proved that there is a one-to-one correspondence between triangulated $n$-gons and positive friezes of order $n$, in which the triangle-counting cycle of an $n$-gon corresponds to the quiddity cycle of a positive frieze.

The subject of frieze patterns has grown enormously since Conway and Coxeter's work, not least because of important connections between frieze patterns and cluster algebras; see, for example, \cite{CaCh2006}. Assem, Reutenauer and Smith \cite{AsReSm2010} introduced the concept of $\text{SL}_2$-tilings to this expanding field in their study of friezes and cluster algebras. The review article by Morier-Genoud \cite{Mo2015} discusses the significance of frieze patterns in diverse disciplines.

Our starting point is the work of Morier-Genoud, Ovsienko and Tabachnikov \cite{MoOvTa2015}, who reproved Conway and Coxeter's original result using the Farey graph and studied certain periodic $\text{SL}_2$-tilings. We take these geometric methods further, and offer a unified approach to classifying $\text{SL}_2$-tilings, bringing together recent work of Bergeron and Reutenauer on tame $\text{SL}_2$-tilings \cite{BeRe2010}, of Bessenrodt, Holm and J\o rgensen \cites{BeHoJo2017,HoJo2013} on positive integer $\text{SL}_2$-tilings, of Baur, Parsons and Tschabold  \cites{BaPaTs2016,BaPaTs2018,Ts2019} on infinite friezes, of Morier-Genoud, Ovsienko and Tabachnikov on antiperiodic tilings \cite{MoOvTa2015}, and of Ovsienko \cite{Ov2018} on tame friezes with positive quiddity sequences. We demonstrate that the geometric, numeric and combinatorial properties of the Farey graph provide deep insight into $\text{SL}_2$-tilings.

This extended introduction describes the full suite of results for this classification programme, illustrating the theorems with examples and figures.

Fundamental to this approach is the special linear group of degree 2 over the integers, namely
\[
\text{SL}_2(\mathbb{Z}) = \left\{ \begin{pmatrix}a&b\\ c& d\end{pmatrix} : a,b,c,d\in\mathbb{Z},\, ad-bc=1\right\}.
\]
Informally speaking, an $\text{SL}_2$-tiling is a bi-infinite matrix such that any two-by-two submatrix belongs to $\text{SL}_2(\mathbb{Z})$. An example of an $\text{SL}_2$-tiling is shown in Figure~\ref{fig3}. 

\begin{figure}[ht]
\centering
\begingroup
$
  \vcenter{
  \xymatrix @-0.2pc @!0 {
          & &  &  & \vdots &  &  & &\\
         & 15 & 11& 7 & 3 & 5 & 7 & 9 & \\
   & -11\phantom{-} &  -8\phantom{-} & -5\phantom{-} & -2\phantom{-} & -3\phantom{-} & -4\phantom{-}  & -5\phantom{-} &\\
    &  7    &   5  &	3 & 1 & 1 & 1  & 1 &  \\
\dotsb  & -3\phantom{-} &   -2\phantom{-}  & -1\phantom{-} & 0 & 1 & 2  &  3 &\dotsb \\
       & -1\phantom{-}   &  -1\phantom{-}  &  -1\phantom{-} & -1\phantom{-} & -3\phantom{-} & -5\phantom{-}  & -7\phantom{-} &\\
      &5    &  4  &  3 & 2 & 5 & 8 & 11 &\\
      & -9\phantom{-} & -7\phantom{-}& -5\phantom{-} & -3\phantom{-} & -7\phantom{-} & -11\phantom{-} & -15\phantom{-} & \\
       &  &  &  &   \vdots &  &  & & 
            }
          }
$
\endgroup
\caption{An $\text{SL}_2$-tiling}
\label{fig3}
\end{figure}

Now we give a more precise definition of an $\text{SL}_2$-tiling, and to this end we consider functions $\mathbf{M}\colon \mathbb{Z}\times\mathbb{Z}\longrightarrow\mathbb{Z}$, which represent bi-infinite matrices. It simplifies our presentation to write $m_{i,j}$ for the entry $\mathbf{M}(i,j)$, for~$i,j\in\mathbb{Z}$.

\begin{definition}
An \emph{$\textnormal{SL}_2$-tiling} is a function $\mathbf{M}\colon \mathbb{Z}\times\mathbb{Z}\longrightarrow\mathbb{Z}$ such that
\[
\begin{pmatrix}m_{i,j} & m_{i,j+1}\\ m_{i+1,j} & m_{i+1,j+1} \end{pmatrix}\in \text{SL}_2(\mathbb{Z}),
\]
for $i,j\in\mathbb{Z}$.
\end{definition}

We think of $m_{i,j}$ as the entry of $\mathbf{M}$ in the $i$th row and $j$th column, with the row index increasing downwards and the column index increasing from left to right -- the usual conventions for matrices.

We restrict our attention to a class of $\text{SL}_2$-tilings called \emph{tame} tilings, which possess rigidity properties essential to our approach.

\begin{definition}
An $\textnormal{SL}_2$-tiling $\mathbf{M}$ is \emph{tame} if
\[
\det\begin{pmatrix}m_{i,j} & m_{i,j+1} & m_{i,j+2}\\ m_{i+1,j} & m_{i+1,j+1} & m_{i+1,j+2}\\  m_{i+2,j} & m_{i+2,j+1} & m_{i+2,j+2}\end{pmatrix}=0,
\]
for $i,j\in\mathbb{Z}$.
\end{definition}

Our principal tool for classifying $\text{SL}_2$-tilings is the \emph{Farey graph}. This is an infinite graph embedded in the extended complex plane $\mathbb{C}\cup\{\infty\}$. In future we write $\mathbb{C}_\infty$ for $\mathbb{C}\cup\{\infty\}$ (and use similar notation for $\mathbb{R}_\infty$ and $\mathbb{Q}_\infty$). To describe the Farey graph precisely, we define $\mathbb{H}$ to be the (open) upper half-plane, which is a model of the hyperbolic plane when it is endowed with the Riemannian metric $|dz|/\text{Im}\, z$ (however, hyperbolic geometry will not feature explicitly in our arguments). Each hyperbolic line in this model of the hyperbolic plane is either the upper half of a circle centred on the real axis or a half-line in $\mathbb{H}$ orthogonal to the real axis. Hyperbolic lines of the former type join one real number (on the boundary of $\mathbb{H}$) to another, and hyperbolic lines of the latter type join a real number to $\infty$.

We use the term \emph{reduced rational} to describe an expression $a/b$ in which $a$ and $b$ are coprime integers. Each rational number can be expressed as a reduced rational in precisely two ways; for example, the rational number $2/3$ can also be written as $(-2)/(-3)$. For convenience, we consider the expressions $1/0$ and $(-1)/0$ to be reduced rationals, both representing the point $\infty$.

The Farey graph $\mathscr{F}$ is the graph with vertices $\mathbb{Q}_\infty$ and edges comprising those pairs $a/b$ and $c/d$ of reduced rationals for which $ad-bc=\pm 1$. We represent the edge incident to $a/b$ and $c/d$ by the unique hyperbolic line in $\mathbb{H}$ between those two boundary points. The collection of all edges creates a tessellation of $\mathbb{H}$ by triangles, part of which is shown in Figure~\ref{fig4}.

\begin{figure}[ht]
\centering
\begin{tikzpicture}[scale=4]
\begin{scope}
    \clip(-0.9,-0.2) rectangle (2.3,0.75);
	\fareygraph[thin]{-1}{2}{0}
\end{scope}
\end{tikzpicture}
\caption{Part of the Farey graph}
\label{fig4}
\end{figure}

Next we discuss an action of $\text{SL}_2(\mathbb{Z})$ on the Farey graph. Given $A\in \text{SL}_2(\mathbb{Z})$ and $z\in\mathbb{C}_\infty$, we define
\[
A(z) = \frac{az+b}{cz+d}, \quad\text{where}\quad A=\begin{pmatrix}a& b\\ c&d\end{pmatrix},
\]
with the usual conventions concerning $\infty$. This formula specifies an action of $\text{SL}_2(\mathbb{Z})$ on $\mathbb{C}_\infty$ in the standard way. It also specifies an action of $\text{SL}_2(\mathbb{Z})$ on the extended real line $\mathbb{R}_\infty$, on the extended rationals $\mathbb{Q}_\infty$, and on the upper half-plane $\mathbb{H}$ as a group of orientation-preserving hyperbolic isometries. The action is not faithful because $A$ and $-A$ determine the same map; however, the action of the quotient group $\text{PSL}_2(\mathbb{Z})$ (the modular group) is faithful.

Suppose now that $u_1,u_2,v_1,v_2$ are integers that satisfy $u_1v_2-u_2v_1=1$. Let
\[
\begin{pmatrix}u_1' \\ u_2'\end{pmatrix}=A\begin{pmatrix}u_1 \\ u_2\end{pmatrix}
\quad\text{and}\quad
\begin{pmatrix}v_1' \\ v_2'\end{pmatrix}=A\begin{pmatrix}v_1 \\ v_2\end{pmatrix}.
\]
Then 
\[
\begin{pmatrix}u_1' & v_1'\\ u_2' & v_2'\end{pmatrix}=A\begin{pmatrix}u_1 & v_1\\ u_2 & v_2\end{pmatrix}\in\text{SL}_2(\mathbb{Z}),
\]
so $u_1'v_2'-u_2'v_1'=1$. In particular, if $u=u_1/u_2$ and $v=v_1/v_2$ are adjacent vertices of the Farey graph $\mathscr{F}$, and $A\in\text{SL}_2(\mathbb{Z})$, then $A(u)$ and $A(v)$ are also adjacent vertices of $\mathscr{F}$. Consequently, we see that $\text{SL}_2(\mathbb{Z})$ acts on $\mathscr{F}$ too, and in fact each element of $\text{SL}_2(\mathbb{Z})$ induces a graph automorphism of $\mathscr{F}$.

Given two vertices $u$ and $v$ of $\mathscr{F}$, we write $u\sim v$ if $u$ and $v$ are adjacent. A \emph{bi-infinite path} in $\mathscr{F}$ is a sequence $\dotsc,v_{-1},v_0,v_1,v_2,\dotsc$ of vertices of $\mathscr{F}$ such that $v_i\sim v_{i+1}$, for $i\in\mathbb{Z}$. We denote this path by $\langle\,\dotsc,v_{-1},v_0,v_1,v_2,\dotsc\rangle$. We also consider \emph{finite paths}, defined in the obvious way, with similar notation. Each vertex $v_i$ can be written as a reduced rational in exactly two ways, and it is possible to choose such representations $v_i=a_i/b_i$, for $i\in\mathbb{Z}$, such that $a_ib_{i+1}-a_{i+1}b_i=1$, for $i\in\mathbb{Z}$. In fact, there are exactly two ways of doing this; the other way is $v_i=(-a_i)/(-b_i)$, for $i\in\mathbb{Z}$.

In this paper we use paths in $\mathscr{F}$ to classify $\text{SL}_2$-tilings. We now describe the details of this correspondence.

Evidently, if $\mathbf{M}$ is an $\text{SL}_2$-tiling, then so too is the bi-infinite matrix $-\mathbf{M}$, obtained by taking the negative of each of the entries of $\mathbf{M}$. For our purposes, it is helpful to identify these two tilings, as follows. We define $\SL$ to be the quotient set of the set of all tame $\text{SL}_2$-tilings  by the equivalence relation that identifies $\mathbf{M}$ and $-\mathbf{M}$. And we write $\pm \mathbf{M}$ for an element of $\SL$, where $\mathbf{M}$ is understood to be a tame $\text{SL}_2$-tiling.

Next, consider the collection $\PATHP$ of all bi-infinite paths in $\mathscr{F}$. We will define a function $\widetilde{\Phi}$ from $\PATHP\times\PATHP$ to $\SL$. Consider a pair of bi-infinite paths $(\gamma,\delta)$. The vertices of $\gamma$ are reduced rationals $a_i/b_i$, for $i\in\mathbb{Z}$, and the vertices of $\delta$ are reduced rationals $c_j/d_j$, for $j\in\mathbb{Z}$. We choose these representations such that $a_ib_{i+1}-a_{i+1}b_i=1$ and $c_jd_{j+1}-c_{j+1}d_j=1$, for $i,j\in\mathbb{Z}$. Let us now define $\mathbf{M}\colon \mathbb{Z}\times \mathbb{Z}\longrightarrow \mathbb{Z}$ by 
\[
m_{i,j}=a_id_j-b_ic_j,
\]
where, as usual, $m_{i,j}=\mathbf{M}(i,j)$. This is the fundamental formula underlying our approach; it is much the same as the formula used in \cite[Theorem~3]{MoOvTa2015}, but without the modulus signs. One can check that $\mathbf{M}$ is a tame $\text{SL}_2$-tiling (we do so in Section~\ref{sec2}), and we define $\widetilde{\Phi}(\gamma,\delta)=\pm\mathbf{M}$.

Notice that switching from $a_i/b_i$ to $(-a_i)/(-b_i)$ for all the vertices of $\gamma$ changes the corresponding $\text{SL}_2$-tiling $\mathbf{M}$ to $-\mathbf{M}$. This justifies the need to identify $\pm\mathbf{M}$ in $\SL$.

Let us consider an example, using the bi-infinite paths
\[
\gamma = \left\langle\,\dotsc ,-2,-1,0,1,2,\dotsc \right\rangle 
\quad\text{and}\quad
\delta = \left\langle\,\dotsc ,\tfrac34,\tfrac23,\tfrac12,0,-\tfrac12,-\tfrac23,-\tfrac34,\dotsc \right\rangle
\]
illustrated in Figure~\ref{fig5}. The path $\gamma$ has vertices $a_i/b_i$, and $\delta$ has vertices $c_j/d_j$, for $i,j\in\mathbb{Z}$, where $a_i=(-1)^ii$, $b_i=(-1)^i$, $c_j=-j$ and $d_j=|j|+1$. The signs of these numbers have been chosen carefully to ensure that $a_ib_{i+1}-a_{i+1}b_i=1$ and $c_jd_{j+1}-c_{j+1}d_j=1$, for $i,j\in\mathbb{Z}$. Then the corresponding matrix $\mathbf{M}$ is that given in Figure~\ref{fig3}, with entries
\[
m_{i,j} = (-1)^i(i(|j|+1)+j),
\]
for $i,j\in\mathbb{Z}$. In general, each $0$ entry of $\mathbf{M}$ corresponds to an intersection point of $\gamma$ and $\delta$; in this particular case there is only one such point.

\begin{figure}[ht]
\centering
\begin{tikzpicture}[scale=4]
\begin{scope}
    \clip(-1.6,-0.2) rectangle (1.6,0.75);
    \fareygraph[thin,grey]{-2}{2}{1}

	\foreach \n in {-2,...,1}{
		\draw[gammacol,thick] (\n,0) arc (180:0:0.5)node[pos=0.5,pointer={1.0}{0}]{};
	}
	\foreach \n in {1,...,10}{
		\ifnum \n < 3 
			\draw[deltacol,thick] ({\n/(\n+1)},0) arc (0:180:{0.5/(\n*(\n+1))}) node[pos=0.5,pointer={{1.0/(1*\n)}}{180}]{};
			\draw[deltacol,thick] ({-(\n-1)/\n},0) arc (0:180:{0.5/(\n*(\n+1))}) node[pos=0.5,pointer={{1.0/(1*\n)}}{180}]{};
		\else
			\draw[deltacol,thick] ({\n/(\n+1)},0) arc (0:180:{0.5/(\n*(\n+1))}); 
			\draw[deltacol,thick] ({-(\n-1)/\n},0) arc (0:180:{0.5/(\n*(\n+1))});
		\fi
	}
	
    \node[gammacol,below] at (-1,0) {$-1\phantom{-}$};  	
    \node[below] at (0,0) {$0$};
    \node[gammacol,below] at (1,0) {$1$};
    \node[deltacol,below] at (0.5,0) {$\tfrac12$};
    \node[deltacol,below] at (2/3,0) {$\tfrac23$};
    \node[deltacol,below] at (3/4,0) {$\tfrac34$};
    \node[deltacol,below] at (-0.5,0) {$-\tfrac12\phantom{-}$};
    \node[deltacol,below] at (-2/3,0) {$-\tfrac23\phantom{-}$};
    
    \node[gammacol,above] at (0.5,0.5) {$\gamma$};
    \node[deltacol,above] at (-0.25,0.25) {$\delta$};
\end{scope}
\end{tikzpicture}
\caption{The paths $\gamma = \left\langle\,\dotsc ,-2,-1,0,1,2,\dotsc \right\rangle$  and $\delta = \left\langle\,\dotsc ,\tfrac34,\tfrac23,\tfrac12,0,-\tfrac12,-\tfrac23,-\tfrac34,\dotsc \right\rangle$}
\label{fig5}
\end{figure}

Since the image of a bi-infinite path under an element of $\text{SL}_2(\mathbb{Z})$ is also a bi-infinite path, we see that $\text{SL}_2(\mathbb{Z})$ acts on $\PATHP$, and so it acts on $\PATHP\times\PATHP$ too, by the formula $A(\gamma,\delta)=(A\gamma,A\delta)$, for $A\in\text{SL}_2(\mathbb{Z})$ and $\gamma,\delta\in\PATHP$. Let $\pi\colon \PATHP\times\PATHP\longrightarrow(\PATHP\times\PATHP)/\textnormal{SL}_2(\mathbb{Z})$ be the quotient map.

In Section~\ref{sec2} we will verify that $\widetilde{\Phi}(A\gamma,A\delta)=\widetilde{\Phi}(\gamma,\delta)$, for $A\in\text{SL}_2(\mathbb{Z})$ and $\gamma,\delta\in\PATHP$. It follows that $\widetilde{\Phi}$ induces a map $\Phi$ from $(\PATHP\times\PATHP)/\textnormal{SL}_2(\mathbb{Z})$ to $\SL$ such that the following diagram commutes.

\begin{center}
\begin{tikzpicture}
  \matrix (m) [matrix of math nodes,row sep=3em,column sep=4em,minimum width=2em]
  {
     \PATHP\times\PATHP & \SL \\
     & \\
     (\PATHP\times\PATHP)/\text{SL}_2(\mathbb{Z}) &  \\};
  \path[-stealth]
    (m-1-1) edge node [left] {$\pi$} (m-3-1)
            edge node [above] {$\widetilde{\Phi}$} (m-1-2)
    (m-3-1) edge node [below right] {$\Phi$} (m-1-2);
\end{tikzpicture}
\end{center}

This brings us to our first theorem.

\begin{theorem}\label{thm1}
The map $\Phi\colon (\PATHP\times\PATHP)/\textnormal{SL}_2(\mathbb{Z})\longrightarrow \SL $ is a one-to-one correspondence.
\end{theorem}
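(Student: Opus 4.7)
The plan is to prove bijectivity of $\Phi$ by separately establishing injectivity and surjectivity, using the simply transitive action of $\textnormal{SL}_2(\mathbb{Z})$ on labelled directed edges of $\mathscr{F}$ together with the rigidity provided by tameness. I take for granted the verifications promised for Section~\ref{sec2}: that $\mathbf{M}$ defined by $m_{i,j}=a_id_j-b_ic_j$ is a tame $\textnormal{SL}_2$-tiling, and that $\widetilde{\Phi}(A\gamma,A\delta)=\widetilde{\Phi}(\gamma,\delta)$ for $A\in\textnormal{SL}_2(\mathbb{Z})$. Both follow from the multiplicativity of the determinant: the $2\times 2$ minor of $\mathbf{M}$ on rows $i,i+1$ and columns $j,j+1$ factors as $(a_ib_{i+1}-a_{i+1}b_i)(c_jd_{j+1}-c_{j+1}d_j)=1$; the $3\times 3$ minor vanishes because $\mathbf{M}$ factors through $\mathbb{Z}^2$; and replacing $(a_i,b_i)^{\mathsf T}$ by $A(a_i,b_i)^{\mathsf T}$ and $(c_j,d_j)^{\mathsf T}$ by $A(c_j,d_j)^{\mathsf T}$ multiplies each $m_{i,j}$ by $\det A=1$.

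For injectivity, suppose $\Phi(\pi(\gamma,\delta))=\Phi(\pi(\gamma',\delta'))$, so the associated tilings agree up to an overall sign. After negating the labels of $\gamma$ if necessary, I can assume $\mathbf{M}=\mathbf{M}'$ as matrices. The group $\textnormal{SL}_2(\mathbb{Z})$ acts simply transitively on labelled directed edges of $\mathscr{F}$, so there is a unique $A\in\textnormal{SL}_2(\mathbb{Z})$ sending $((a_0,b_0),(a_1,b_1))$ to $((a'_0,b'_0),(a'_1,b'_1))$. Replacing $(\gamma,\delta)$ by $(A\gamma,A\delta)$ reduces to the case where the first two labelled vertices of $\gamma$ and $\gamma'$ coincide. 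For each $j$, both $(c_j,d_j)$ and $(c'_j,d'_j)$ solve the linear system
\[
\begin{pmatrix}-b_0 & a_0\\ -b_1 & a_1\end{pmatrix}\begin{pmatrix}c_j\\ d_j\end{pmatrix}=\begin{pmatrix}m_{0,j}\\ m_{1,j}\end{pmatrix},
\]
whose coefficient matrix has determinant $a_0b_1-a_1b_0=1$ and is therefore invertible over $\mathbb{Z}$. Hence $\delta=\delta'$ as labelled paths, and the symmetric argument using the first two columns gives $\gamma=\gamma'$.

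For surjectivity, let $\mathbf{M}$ be a tame $\textnormal{SL}_2$-tiling. Fix any labelled pair $(a_0,b_0),(a_1,b_1)$ with $a_0b_1-a_1b_0=1$ and define
\[
c_j=a_1m_{0,j}-a_0m_{1,j},\qquad d_j=b_1m_{0,j}-b_0m_{1,j}.
\]
By construction $m_{0,j}=a_0d_j-b_0c_j$ and $m_{1,j}=a_1d_j-b_1c_j$, and the product-of-determinants identity shows $c_jd_{j+1}-c_{j+1}d_j=1$, which implies both that each $c_j/d_j$ is in lowest terms and that consecutive vertices are Farey-adjacent, so $\delta$ is a bi-infinite path in $\mathscr{F}$. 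Define $(a_i,b_i)$ analogously from the first two columns; at $i=0,1$ this reproduces the chosen labels because the two linear systems are mutually inverse. It remains to check $m_{i,j}=a_id_j-b_ic_j$ at indices with $i,j\notin\{0,1\}$, and for this I expand the vanishing $3\times 3$ minor of $\mathbf{M}$ on rows $0,1,i$ and columns $0,1,j$ along the third column; using $m_{0,0}m_{1,1}-m_{0,1}m_{1,0}=1$ to solve for $m_{i,j}$ yields a quadratic polynomial in the boundary entries that coincides, after substituting the formulas for $a_i,b_i,c_j,d_j$, with the expansion of $a_id_j-b_ic_j$.

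The main obstacle is this last identification. Everywhere else tameness is used only insofar as $\mathbf{M}$ has rank two over $\mathbb{Q}$; it is the $3\times 3$ relation that promotes the border-based formulas for $(a_i,b_i)$ and $(c_j,d_j)$ into a global factorisation of $\mathbf{M}$. Care is also required throughout to distinguish paths in $\mathscr{F}$ from labelled paths carrying specific reduced-rational representatives, and to keep track of the sign ambiguity inherent in $\SL$.
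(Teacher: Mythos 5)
Your overall strategy is the same as the paper's: you recover the two paths from the first two rows and the first two columns of the tiling (your formulas for $c_j,d_j$ are exactly the paper's map $\Psi$ of equations~\eqref{eqn3}, written out for a general choice of initial labelled edge rather than the specific one the paper takes), and then you check that the two constructions are mutually inverse. Your handling of well-definedness, $\textnormal{SL}_2(\mathbb{Z})$-invariance and injectivity is correct and in places cleaner than the paper's: invariance via multiplicativity of the determinant replaces the computation with $JA^{-1}J^{-1}=A^{T}$, and injectivity via unique solvability of the two unimodular $2\times 2$ linear systems replaces the paper's explicit identification of the single matrix $A$ relating the recovered pair to the original one.

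The gap is in the surjectivity step. Tameness, as defined, asserts only the vanishing of $3\times 3$ minors taken on three \emph{consecutive} rows and three consecutive columns, whereas you invoke the vanishing of the minor on rows $0,1,i$ and columns $0,1,j$ for arbitrary $i$ and $j$ --- that is, you assume $\mathbf{M}$ has rank two. That assumption is true, but it is precisely the nontrivial propagation that the paper carries out through Lemma~\ref{lem1} and Lemma~\ref{lem3}, and in your write-up it is asserted rather than proved (your closing remark even treats ``tame'' and ``rank two over $\mathbb{Q}$'' as interchangeable). The fix is short but must be included: since every contiguous $2\times 2$ minor equals $1$, consecutive rows restricted to two consecutive columns are linearly independent, so the contiguous $3\times 3$ conditions give a relation $R_{i+1}=u_iR_i-R_{i-1}$ with coefficients independent of the column window (this is Lemma~\ref{lem1}); induction then shows every row is a combination of $R_0$ and $R_1$, so all $3\times 3$ minors vanish. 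With that inserted, your cofactor expansion along the third column does yield $m_{i,j}=a_id_j-b_ic_j$, in agreement with Lemma~\ref{lem3}, and the rest of the argument stands.
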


Let us briefly (and informally) discuss the inverse function of $\Phi$; a more formal definition will follow later. Consider a tame $\text{SL}_2$-tiling $\mathbf{M}$. Choose any two consecutive rows of $\mathbf{M}$, and form a bi-infinite sequence of rational numbers by dividing each entry in the top row by the corresponding entry in the bottom row. This sequence is a bi-infinite path in the Farey graph $\mathscr{F}$, and another bi-infinite path can be obtained by choosing two columns instead of rows. After adjusting one of the two paths by applying a suitable element of $\text{SL}_2(\mathbb{Z})$, we obtain a pair $(\gamma,\delta)$ of bi-infinite paths in $\mathscr{F}$ that satisfy $\widetilde{\Phi}(\gamma,\delta)=\pm \mathbf{M}$.

The characterisation of tame $\text{SL}_2$-tilings provided by Theorem~\ref{thm1} is comparable to that of \cite{BeRe2010}*{Proposition~3} (in two dimensions), but framed in the context of the Farey graph. The significance of Theorem~\ref{thm1} is that it gives us geometric insight into the structure of tame $\text{SL}_2$-tilings, and by restricting $\widetilde{\Phi}$ to subcollections of $\PATHP\times\PATHP$ we can obtain precise descriptions of various classes of tilings.

For a first application of this procedure, we classify the set $\SL^+$ of positive $\text{SL}_2$-tilings ($\text{SL}_2$-tilings with positive integer entries), all of which are necessarily tame. A classification of positive $\text{SL}_2$-tilings has appeared before, using combinatorial techniques, in \cite{BeHoJo2017}. Later (in Section~\ref{sec9}) we will demonstrate that the geometric model of positive $\text{SL}_2$-tilings that we obtain using the Farey graph gives rise to the combinatorial model of \cite{BeHoJo2017}.

The set $\SL^+$ can be identified with a subset of $\SL$ in the obvious way (under which a positive $\text{SL}_2$-tiling $\mathbf{M}$ is identified with $\pm\mathbf{M}$).

For the next definition we identify $\mathbb{R}_\infty$ with the unit circle $\mathbb{S}$ in the complex plane using the modified Cayley transform $z\longmapsto (iz+1)/(z+i)$, which maps $\mathbb{R}_\infty$ to $\mathbb{S}$, and sends $\infty$ to $i$, $0$ to $-i$, $1$ to $1$, and $-1$ to $-1$.

\begin{definition}
A bi-infinite sequence of distinct points $\dotsc,v_{-1},v_0,v_1,v_2,\dotsc$ in $\mathbb{R}_\infty$ is in \emph{clockwise order} if the corresponding sequence of points $\dotsc,\overline{v}_{-1},\overline{v}_0,\overline{v}_1,\overline{v}_2,\dotsc$ in $\mathbb{S}$ is such that $\overline{v}_i,\overline{v}_j,\overline{v}_k$ is in clockwise order around $\mathbb{S}$, for all $i,j,k\in\mathbb{Z}$ with $i<j<k$.

A bi-infinite path $\gamma=\langle\, \dotsc,v_{-1},v_0,v_1,v_2,\dotsc\rangle$ in $\mathscr{F}$ is a \emph{clockwise} bi-infinite path if the sequence $\dotsc,v_{-1},v_0,v_1,v_2,\dotsc$ is in clockwise order. 
\end{definition}

The definition says, informally speaking, that a clockwise bi-infinite path traverses $\mathbb{R}_\infty$ clockwise and does not complete more than one full cycle. We define clockwise order for finite and half-infinite sequences, and clockwise finite and half-infinite paths, in a similar way. 

Each clockwise bi-infinite path $\gamma$ converges (in $\mathbb{R}_\infty$) in the backward direction to a point $\gamma_{-\infty}$ and in the forward direction to a point $\gamma_\infty$. These are called the \emph{backward limit} and \emph{forward limit} of $\gamma$, respectively, and they may be equal. Consider the collection of all pairs of clockwise bi-infinite paths $(\gamma,\delta)$ for which $\gamma_{-\infty},\gamma_\infty,\delta_{-\infty},\delta_\infty$ is in clockwise order, where possibly $\gamma_\infty=\delta_{-\infty}$ and possibly $\gamma_{-\infty}=\delta_\infty$ (but $\gamma_\infty\neq \gamma_{-\infty}$ and $\delta_\infty\neq \delta_{-\infty}$). Let $(\PATHP\times\PATHP)^+$ denote the collection of all such pairs. Observe that $\text{SL}_2(\mathbb{Z})$ acts on $(\PATHP\times\PATHP)^+$. 

By restricting the function $\Phi$ to $(\PATHP\times\PATHP)^+/\text{SL}_2(\mathbb{Z})$ we obtain the following result.

\begin{theorem}\label{thm2}
The map $\Phi\colon (\PATHP\times\PATHP)^+/\textnormal{SL}_2(\mathbb{Z})\longrightarrow \SL^+ $ is a one-to-one correspondence.
\end{theorem}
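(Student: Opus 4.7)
My plan is to derive Theorem~\ref{thm2} from Theorem~\ref{thm1} by verifying that the bijection $\Phi$ restricts to a bijection between $(\PATHP\times\PATHP)^+/\text{SL}_2(\mathbb{Z})$ and $\SL^+$. Injectivity of the restriction is automatic from Theorem~\ref{thm1}, so two tasks remain: (i) $\Phi$ sends the left-hand side into $\SL^+$, and (ii) every class in $\SL^+$ is hit.

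The forward step (i) rests on a geometric reading of the formula $m_{i,j}=a_id_j-b_ic_j$. Regarding the coherent lifts $(a_i,b_i)$ and $(c_j,d_j)$ as vectors in $\mathbb{R}^2\setminus\{0\}$, the determinant $a_id_j-b_ic_j$ is positive precisely when $(c_j,d_j)$ lies strictly less than $\pi$ counterclockwise of $(a_i,b_i)$. The projection $(a,b)\mapsto a/b$ is a two-to-one cover $\mathbb{S}^1\to\mathbb{R}_\infty\cong\mathbb{S}$, which (as I would check against the Cayley parametrisation) converts counterclockwise rotation in $\mathbb{R}^2$ into clockwise motion on $\mathbb{S}$; hence a coherent lift of a clockwise bi-infinite path sweeps out an arc of angular length at most $\pi$ in $\mathbb{R}^2$. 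Given $(\gamma,\delta)\in(\PATHP\times\PATHP)^+$, the clockwise ordering of $\gamma_{-\infty},\gamma_\infty,\delta_{-\infty},\delta_\infty$ lets me pin the remaining sign in $\delta$'s coherent lift so that the arcs of lifts of $\gamma$ and of $\delta$ assemble into a single arc of total angular length at most $\pi$, with every $(c_j,d_j)$ strictly counterclockwise of every $(a_i,b_i)$. Thus $m_{i,j}>0$ for all $i,j$, and $\pm\mathbf{M}\in\SL^+$.

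For the reverse step (ii), given a positive representative $\mathbf{M}$ I construct a preimage explicitly. Take $(a_i,b_i)=(m_{i,0},m_{i,1})$; the $\text{SL}_2$ relation $m_{i,0}m_{i+1,1}-m_{i+1,0}m_{i,1}=1$ gives $a_ib_{i+1}-a_{i+1}b_i=1$ and, combined with positivity, shows that $m_{i,0}/m_{i,1}$ is a strictly decreasing sequence of positive rationals, so the resulting $\gamma$ is a clockwise bi-infinite path in $(0,\infty)$. Define $(c_j,d_j)=(m_{1,0}m_{0,j}-m_{0,0}m_{1,j},\,m_{1,1}m_{0,j}-m_{0,1}m_{1,j})$; a direct expansion yields $c_jd_{j+1}-c_{j+1}d_j=1$, and the identity $a_id_j-b_ic_j=m_{i,j}$ holds for $i\in\{0,1\}$ by construction and extends to all $i$ through the fact that tameness forces each row of $\mathbf{M}$ to be the $\mathbb{Z}$-linear combination of rows $0$ and $1$ with coefficients $m_{i,0}m_{1,1}-m_{i,1}m_{1,0}$ and $m_{i,1}m_{0,0}-m_{i,0}m_{0,1}$. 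Thus $\widetilde{\Phi}(\gamma,\delta)=\mathbf{M}$. Observe $\delta_0=0$, $\delta_1=\infty$, and that $j\mapsto\delta_j$ is the image of the clockwise positive-rational sequence $u_j=m_{0,j}/m_{1,j}$ under the orientation-preserving $\text{SL}_2(\mathbb{Z})$-M\"obius map $u\mapsto(m_{1,0}u-m_{0,0})/(m_{1,1}u-m_{0,1})$, so $\delta$ is itself a clockwise bi-infinite path.

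The hard part is verifying that $(\gamma,\delta)\in(\PATHP\times\PATHP)^+$, i.e.\ that the four limit points appear in the required clockwise order. Tracking the M\"obius map above, I would locate $\delta_{-\infty}\in(0,m_{1,0}/m_{1,1}]$ and $\delta_\infty\in[m_{0,0}/m_{0,1},\infty)$, and combine these with the inequalities $\gamma_\infty\leq m_{1,0}/m_{1,1}<m_{0,0}/m_{0,1}\leq\gamma_{-\infty}$ coming from the construction of $\gamma$. The cyclic order then reduces to showing $\delta_{-\infty}\leq\gamma_\infty$ and $\gamma_{-\infty}\leq\delta_\infty$, which I expect to establish by exploiting the $3$-term recurrences (with coefficients the quiddity sequences) satisfied by the rows and columns of a tame positive $\text{SL}_2$-tiling, since these control the asymptotics of all four limit points simultaneously; the equality cases correspond precisely to the two coincidences $\gamma_\infty=\delta_{-\infty}$ and $\gamma_{-\infty}=\delta_\infty$ explicitly permitted in the definition of $(\PATHP\times\PATHP)^+$.
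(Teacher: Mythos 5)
Your overall strategy is the paper's: restrict the bijection of Theorem~\ref{thm1} and check that $\Phi$ and its inverse exchange $(\PATHP\times\PATHP)^+/\text{SL}_2(\mathbb{Z})$ and $\SL^+$. Step (i) is sound --- lifting the two paths to vectors spanning a combined angular sector of length at most $\pi$, with every $(c_j,d_j)$ strictly counterclockwise of every $(a_i,b_i)$, is a legitimate and rather tidy repackaging of the paper's explicit sign normalisation. The first half of step (ii) coincides with the paper's map $\Psi$ verbatim, and your observation that $\delta$ is the image of the clockwise sequence $m_{0,j}/m_{1,j}$ under an orientation-preserving element of $\text{SL}_2(\mathbb{Z})$ is a clean substitute for the paper's monotonicity computation.

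The gap is exactly where you flag ``the hard part'': the inequalities $\delta_{-\infty}\leq\gamma_\infty$ and $\gamma_{-\infty}\leq\delta_\infty$ are never proved. The interval locations $\delta_{-\infty}\in(0,m_{1,0}/m_{1,1}]$ and $\delta_\infty\in[m_{0,0}/m_{0,1},\infty)$ that you extract from the M\"obius map are too weak (they are compatible with $\gamma_\infty<\delta_{-\infty}$), and the appeal to three-term recurrences is an unexecuted plan rather than an argument --- and far more machinery than is needed. The inequalities fall out of data you already have: $a_id_j-b_ic_j=m_{i,j}>0$ for all $i,j$; $b_i=m_{i,1}>0$ for all $i$ by positivity of $\mathbf{M}$; and, from $d_j=m_{1,j}m_{1,1}\bigl(m_{0,j}/m_{1,j}-m_{0,1}/m_{1,1}\bigr)$ together with the monotonicity of $m_{0,j}/m_{1,j}$, the signs $d_j>0$ for $j<1$ and $d_j<0$ for $j>1$. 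Dividing $m_{i,j}>0$ by $b_id_j$ gives $a_i/b_i>c_j/d_j$ for all $i$ and all $j<1$, and $a_i/b_i<c_j/d_j$ for all $i$ and all $j>1$; passing to limits yields $\delta_{-\infty}\leq\gamma_\infty<\gamma_{-\infty}\leq\delta_\infty$ in one stroke. This is the paper's route, and it closes your argument in two lines.
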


We demonstrate the correspondence of this theorem with two examples, illustrated in Figure~\ref{fig6}. Each of the two subfigures displays a copy of the Farey graph mapped onto the unit disc by the modified Cayley transform $z\longmapsto (iz+1)/(z+i)$, with vertex labels from $\mathbb{R}_\infty$. A pair of  clockwise bi-infinite paths is highlighted in each Farey graph.

\begin{figure}[ht]
\centering
\begin{tikzpicture}[scale=2.5]

\begin{scope}
\discfareygraph[thin,grey]{4};

\node at ({2*atan(2)-90}:\radplusminus) {$2$};
\node at ({-2*atan(2)-90}:\radplus) {$-2$};
\node at ({2*atan(1/2)-90}:\radplusminus) {$\tfrac12$};
\node at ({-2*atan(1/2)-90}:\radplus) {$-\tfrac12$};

\node[gammacol] at (0:\radplusminus) {$1$};
\node[gammacol] at ({2*atan(2/3)-90}:\radplusminus) {$\tfrac23$};
\node[gammacol] at ({2*atan(3/2)-90}:\radplusminus) {$\tfrac32$};

\node[deltacol] at (180:\radplus) {$-1$};
\node[deltacol] at ({-2*atan(2/3)-90}:\radplus) {$-\tfrac23$};
\node[deltacol] at ({-2*atan(3/2)-90}:\radplus) {$-\tfrac32$};

\foreach \n in {1,...,12}{
	\hgline[gammacol,thick](0,0)(2*atan((2*\n-1)/\n)-90:2*atan((2*\n+1)/(\n+1))-90:\rad);
	\hgline[gammacol,thick](0,0)(2*atan((\n+1)/(2*\n+1))-90:2*atan(\n/(2*\n-1))-90:\rad);
	\hgline[deltacol,thick](0,0)(2*atan((2*\n-1)/\n)+90:2*atan((2*\n+1)/(\n+1))+90:\rad);
	\hgline[deltacol,thick](0,0)(2*atan((\n+1)/(2*\n+1))+90:2*atan(\n/(2*\n-1))+90:\rad);
}

\dirhgline[gammacol,draw=none,thick](0.535,0.7,274)(0:2*atan(2/3)-90:\rad);
\dirhgline[gammacol,draw=none,thick](0.45,0.7,270)(2*atan(2/3)-90:0:\rad);
\dirhgline[deltacol,draw=none,thick](0.535,0.7,94)(180:2*atan(2/3)+90:\rad);
\dirhgline[deltacol,draw=none,thick](0.45,0.7,86)(2*atan(2/3)+90:180:\rad);

\node[gammacol] at (22:0.8*\rad) {$\gamma$};
\node[deltacol] at (202:0.8*\rad) {$\delta$};

\node at (-1.15,-1.15) {(a)}; 
\end{scope}

\begin{scope}[xshift=3cm]
\discfareygraph[thin,grey]{4};

\node[] at (0:\radplusminus) {$1$};
\node[] at (180:\radplus) {$-1$};

\node[gammacol] at (90:\radplusminus) {$\infty$};
\node[gammacol] at ({2*atan(2)-90}:\radplusminus) {$2$};
\node[gammacol] at ({-2*atan(2)-90}:\radplus) {$-2$};
\node[gammacol] at ({2*atan(3/2)-90}:\radplusminus) {$\tfrac32$};
\node[gammacol] at ({-2*atan(3/2)-90}:\radplus) {$-\tfrac32$};

\node[deltacol] at (-90:\radplusminus) {$0$};
\node[deltacol] at ({2*atan(1/2)-90}:\radplusminus) {$\tfrac12$};
\node[deltacol] at ({-2*atan(1/2)-90}:\radplus) {$-\tfrac12$};
\node[deltacol] at ({2*atan(2/3)-90}:\radplusminus) {$\tfrac23$};
\node[deltacol] at ({-2*atan(2/3)-90}:\radplus) {$-\tfrac23$};

\foreach \n in {1,...,20}{
	\hgline[deltacol,thick](0,0)(2*atan((\n-1)/\n)-90:2*atan(\n/(\n+1))-90:\rad);
	\hgline[deltacol,thick](0,0)(-2*atan(\n/(\n+1))-90:-2*atan((\n-1)/\n)-90:\rad);
	\hgline[gammacol,thick](0,0)(2*atan((\n-1)/\n)+90:2*atan(\n/(\n+1))+90:\rad);
	\hgline[gammacol,thick](0,0)(-2*atan(\n/(\n+1))+90:-2*atan((\n-1)/\n)+90:\rad);
}

\dirhgline[deltacol,draw=none,thick](0.5,0.8,204)(-90:2*atan(1/2)-90:\rad);
\dirhgline[deltacol,draw=none,thick](0.5,0.4,240)(2*atan(1/2)-90:2*atan(2/3)-90:\rad);
\dirhgline[deltacol,draw=none,thick](0.48,0.8,155)(-2*atan(1/2)-90:-90:\rad);
\dirhgline[deltacol,draw=none,thick](0.5,0.4,122)(-2*atan(2/3)-90:-2*atan(1/2)-90:\rad);

\dirhgline[gammacol,draw=none,thick](0.48,0.8,25)(90:2*atan(2)-90:\rad);
\dirhgline[gammacol,draw=none,thick](0.5,0.4,-60)(2*atan(3/2)-90:2*atan(2)-90:\rad);
\dirhgline[gammacol,draw=none,thick](0.5,0.8,-24)(90:-2*atan(2)-90:\rad);
\dirhgline[gammacol,draw=none,thick](0.5,0.4,58)(-2*atan(2)-90:-2*atan(3/2)-90:\rad);

 \node[deltacol] at (-50:0.56*\rad) {$\delta$};
 \node[gammacol] at (130:0.56*\rad) {$\gamma$};

\node at (-1.15,-1.15) {(b)}; 
\end{scope}

\end{tikzpicture}
\caption{Two pairs of clockwise bi-infinite paths in the Farey graph}
\label{fig6}
\end{figure}

The paths shown in Figure~\ref{fig6}(a) are
\[
\gamma = \left\langle\,\dotsc ,\tfrac74,\tfrac53,\tfrac32,1,\tfrac23,\tfrac35,\tfrac47,\dotsc \right\rangle 
\quad\text{and}\quad
\delta = \left\langle\,\dotsc ,-\tfrac47,-\tfrac35,-\tfrac23,-1,-\tfrac32,-\tfrac53,-\tfrac74,\dotsc \right\rangle. 
\]
The path $\gamma$ has vertices $a_i/b_i$ and $\delta$ has vertices $c_j/d_j$, for $i,j\in\mathbb{Z}$, where $a_i=\tfrac12(3|i|-i+2)$, $b_i=\tfrac12(3|i|+i+2)$, $c_j=-\tfrac12(3|j|+j+2)$ and $d_j=\tfrac12(3|j|-j+2)$. Here we have $a_ib_{i+1}-a_{i+1}b_i=1$ and $c_jd_{j+1}-c_{j+1}d_j=1$, for $i,j\in\mathbb{Z}$. The limits of the two sequences are $\gamma_{-\infty}=2$, $\gamma_\infty=\tfrac12$, $\delta_{-\infty}=-\tfrac12$ and $\delta_\infty=-2$, and the corresponding positive $\text{SL}_2$-tiling $\mathbf{M}$ has entries 
\[
m_{i,j} = a_id_j-b_ic_j=\tfrac12\left((3|i|+2)(3|j|+2)+ij\right),
\]
for $i,j\in\mathbb{Z}$. This tiling is illustrated in Figure~\ref{fig7}(\subref{fig5a}). 

The paths shown in Figure~\ref{fig6}(b) are
\[
\gamma = \left\langle\,\dotsc ,-\tfrac43,-\tfrac32,-2,\infty,2,\tfrac32,\tfrac43,\dotsc \right\rangle
\quad\text{and}\quad
\delta = \left\langle\,\dotsc ,\tfrac34,\tfrac23,\tfrac12,0,-\tfrac12,-\tfrac23,-\tfrac34,\dotsc \right\rangle . 
\]
The path $\gamma$ has vertices $a_i/b_i$ and $\delta$ has vertices $c_j/d_j$, for $i,j\in\mathbb{Z}$, where $a_i=|i|+1$, $b_i=i$, $c_j=-j$ and $d_j=|j|+1$, so $a_ib_{i+1}-a_{i+1}b_i=1$ and $c_jd_{j+1}-c_{j+1}d_j=1$. The limits of the two sequences are $\gamma_{-\infty}=\delta_\infty=-1$ and $\gamma_\infty=\delta_{-\infty}=1$, and the corresponding positive $\text{SL}_2$-tiling $\mathbf{M}$ has entries 
\[
m_{i,j} = (|i|+1)(|j|+1)+ij,
\]
for $i,j\in\mathbb{Z}$. This tiling is illustrated in Figure~\ref{fig7}(\subref{fig5b}). 

\begin{figure}[ht]
\begin{subfigure}[b]{0.5\textwidth}
\begingroup
$
  \vcenter{
  \xymatrix @-0.2pc @!0 {
          & &  &  & \vdots &  &  & &\\
         & 65 & 47& 29 & 11 & 26 & 41 & 56 & \\
   & 47 &  34 & 21 & 8 & 19 & 30  & 41 &\\
    &  29    &   21  &	13 & 5 & 12 & 19  & 26 &  \\
\dotsb  & 11 &   8  & 5 & 2 & 5 & 8  &  11 &\dotsb \\
       & 26   &  19  &  12 & 5 & 13 & 21  & 29 &\\
      &41    &  30  &  19 & 8 & 21 & 34 & 47 &\\
      & 56 & 41 & 26 & 11 & 29 & 47 & 65 & \\
       &  &  &  &   \vdots &  &  & & 
            }
          }
$
\endgroup
\vspace*{-0.6cm}
\caption{}
\label{fig5a}
\end{subfigure}
\begin{subfigure}[b]{0.5\textwidth}
\begingroup
$
  \vcenter{
  \xymatrix @-0.2pc @!0 {
          & &  &  & \vdots &  &  & &\\
         & 25 & 18& 11 & 4 & 5 & 6 & 7 & \\
   & 18 &  13 & 8 & 3 & 4 & 5  & 6 &\\
    &  11    &   8  &	5 & 2 & 3 & 4  &5 &  \\
\dotsb  & 4 &   3  & 2 & 1 & 2 & 3  &  4 &\dotsb \\
       & 5   &  4  &  3 & 2 & 5 & 8  & 11 &\\
      &6    &  5  &  4 & 3 & 8 & 13 & 18 &\\
      & 7 & 6& 5 & 4 & 11 & 18 & 25 & \\
       &  &  &  &   \vdots &  &  & & 
            }
          }
$
\endgroup
\vspace*{-0.6cm}
\caption{}
\label{fig5b}
\end{subfigure}
\caption{Positive $\text{SL}_2$-tilings corresponding to the two pairs of paths of Figure~\ref{fig6}}
\label{fig7}
\end{figure}

To analyse these two tilings, we define a function $\Delta\colon \mathbb{Q}_{\infty}\times\mathbb{Q}_{\infty} \longrightarrow \mathbb{Z}$ by
\[
\Delta(a/b,c/d)=|ad-bc|, 
\]
where $a/b$ and $c/d$ are reduced rationals. This function has been considered before in a related context, in \cite{MoOvTa2015}. It is not a metric on $\mathbb{Q}_{\infty}$. Evidently $\Delta(x,y)=0$ if and only if $x=y$, and $\Delta(x,y)=1$ if and only if $x$ and $y$ are adjacent vertices of $\mathscr{F}$. Furthermore, if $(\gamma,\delta)\in(\mathscr{P}\times\mathscr{P})^+$, where $\gamma$ has vertices $a_i/b_i$ and $\delta$ has vertices $c_j/d_j$, for $i,j\in\mathbb{Z}$, then we will see in Section~\ref{sec3} that $m_{i,j}=a_id_j-b_ic_j$ is positive, so $m_{i,j}=\Delta(a_i/b_i,c_j/d_j)$.

Now, in Figure~\ref{fig7}(\subref{fig5a}) the (unique) smallest entry is 2. Accordingly, 2 is the least value taken by $\Delta(x,y)$ for vertices $x$ of $\gamma$ and $y$ of $\delta$, where $\gamma$ and $\delta$ are the paths shown in Figure~\ref{fig6}(a). This value is achieved when $x=1$ and $y=-1$, and it is achieved at no other two vertices, one from $\gamma$ and the other from $\delta$ (for a proof of a more general result of this type, see Theorem~\ref{thmJ}).

Consider now the paths $\gamma$ and $\delta$ of Figure~\ref{fig6}(b) and the corresponding tiling of Figure~\ref{fig7}(\subref{fig5b}). These two paths come within a $\Delta$-distance of $1$ from each other at the vertices $0$ and $\infty$, so the corresponding tiling has an entry 1. Again, this is the unique smallest entry, because the $\Delta$-distance between any other two vertices, one from $\gamma$ and the other from $\delta$, is greater than 1. 

The paths $\gamma$ and $\delta$ of Figure~\ref{fig6}(b) are particularly special because they satisfy $\gamma_\infty=\delta_{-\infty}$ and $\gamma_{-\infty}=\delta_\infty$. Moreover, both these points are \emph{rational} numbers. We shall see later that in the special cases when $\gamma_\infty=\delta_{-\infty}$ or $\gamma_{-\infty}=\delta_\infty$, the rationality or irrationality of each of these numbers is highly significant. For instance, if one of them is irrational, then there are infinitely many entries 1 in the corresponding $\text{SL}_2$-tiling (see Theorem~\ref{thm88}). In Section~\ref{sec9} we discuss these properties further, and relate Theorem~\ref{thm2} to the combinatorial models used to classify positive $\text{SL}_2$-tilings in \cites{BeHoJo2017,HoJo2013}.

Let us move on to another example of application of Theorem~\ref{thm1}, this time of \emph{infinite friezes}, studied by Baur, Parsons and Tschabold in \cites{BaPaTs2016,BaPaTs2018} and Tschabold in \cite{Ts2019}. We encountered Coxeter's frieze patterns earlier; infinite friezes are similar types of integer arrays, but instead of a border row of zeros at the bottom they continue downwards indefinitely. An example is shown in Figure~\ref{fig8}. 

\begin{figure}[ht]
\begingroup
\[
  \vcenter{
  \xymatrix @-0.5pc @!0 {
    && 0 && 0 && 0 && 0 && 0 && 0 && 0 & \\
    &&& 1 && 1 && 1 && 1 && 1 && 1 && 1 && \\
    && -1\phantom{-} && 1 && -1\phantom{-} && 2 && -1\phantom{-} && 1 && -1\phantom{-} &\\        
  \cdots  &&& -2\phantom{-} && -2\phantom{-} && -3\phantom{-} && -3\phantom{-} && -2\phantom{-} && -2\phantom{-} && -2\phantom{-} &&\cdots  \\
    && -3\phantom{-} && 3 && -5\phantom{-} && 4 && -5\phantom{-} && 3 && -3\phantom{-} & \\    
     &&& 5 && 8 && 7 && 7 && 8 && 5 && 5 &&  \\
    &&  &&  &&&  &  \qquad\vdots &&  &&  &&  &  
                        }
          }
\]
\endgroup
\caption{An infinite frieze in standard form}
\label{fig8}
\end{figure}

There is a bi-infinite row of 0s followed by successive bi-infinite rows of integers, and each row is shifted relative to the rows above and below it in such a way that any four entries $a$, $b$, $c$ and $d$ in a diamond shape satisfy the unimodular rule $ad-bc=1$, which we met earlier for Coxeter's frieze patterns. For our purposes, it is  convenient to consider infinite friezes as types of $\text{SL}_2$-tilings, and we can do this by, informally speaking, rotating an infinite frieze through $45^\circ$ clockwise to give `half' the entries  of a bi-infinite matrix $\mathbf{M}$ (the entries $m_{i,j}$, for $i\geq j$), with $0$s down the leading diagonal ($m_{i,i}=0$, for $i\in\mathbb{Z}$).

We now complete the other entries of $\mathbf{M}$ by making it antisymmetric: $m_{j,i}=-m_{i,j}$, for $i,j\in\mathbb{Z}$. The resulting bi-infinite matrix $\mathbf{M}$ is clearly an $\text{SL}_2$-tiling, and in fact if $\mathbf{M}$ is tame, then it is the unique tame $\text{SL}_2$-tiling with entries $m_{i,j}$, for $i\geq j$. 

This uniqueness property is proven in, for example, \cite[Proposition~21]{BeRe2010} (with the assumption, not needed here, that $m_{i+1,i-1}\neq 0$, for $i\in\mathbb{Z}$); it also follows quickly from Theorem~\ref{thm1}. Briefly, if $\mathbf{M}$ is tame, then we can choose two paths with vertices $a_i/b_i$ and $c_j/d_j$, for $i,j\in\mathbb{Z}$, where $a_ib_{i+1}-a_{i+1}b_i=1$ and $c_jd_{j+1}-c_{j+1}d_j=1$, such that $m_{i,j}=a_id_j-b_ic_j$. The condition $m_{i,i}=0$ implies that either $a_i=c_i$ and $b_i=d_i$ for $i\in\mathbb{Z}$, or $a_i=-c_i$ and $b_i=-d_i$ for $i\in\mathbb{Z}$, from which it follows that $m_{j,i}=-m_{i,j}$, for $i,j\in\mathbb{Z}$.  

Carrying out this process of completion for the infinite frieze in Figure~\ref{fig8} gives the $\text{SL}_2$-tiling displayed in Figure~\ref{fig9}.

\begin{figure}[ht]
\begingroup
\[
  \vcenter{
  \xymatrix @-0.2pc @!0 {
          & &  &  & \vdots &  &  & &\\
         &0 & -1\phantom{-}& -1\phantom{-} & 2 & 5 & -7\phantom{-} & -12\phantom{-} & \\
   & 1 &  0 & -1\phantom{-} & 1 & 3 & -4\phantom{-}  & -7\phantom{-} &\\
    &  1    &   1  &	0 & -1\phantom{-} & -2\phantom{-} & 3  & 5 &  \\
\dotsb  & -2\phantom{-} &   -1\phantom{-}  & 1 & 0 & -1\phantom{-} & 1  &  2 &\dotsb \\
       & -5\phantom{-}   &  -3\phantom{-}  &  2 & 1 & 0 & -1\phantom{-}  & -1\phantom{-} &\\
      &7    &  4  &  -3\phantom{-} & -1\phantom{-} & 1 & 0 & -1\phantom{-} &\\
      & 12 & 7& -5\phantom{-} & -2\phantom{-} & 1 & 1 & 0 & \\
       &  &  &  &   \vdots &  &  & & 
            }
          }
\]
\endgroup
\caption{An infinite frieze in matrix form}
\label{fig9}
\end{figure}

This discussion motivates the following definition, which uses slightly different terminology to that of \cites{BaPaTs2016,BaPaTs2018,Ts2019} suitable for our purposes.

\begin{definition}
An \emph{infinite frieze} is an $\text{SL}_2$-tiling $\mathbf{M}$ that satisfies $m_{j,i}=-m_{i,j}$, for $i,j\in\mathbb{Z}$. A \emph{positive infinite frieze} is an infinite frieze $\mathbf{M}$ for which $m_{i,j}>0$, whenever $i>j$.
\end{definition}

Of principal interest to us are \emph{tame} infinite friezes. These can be characterised as tame $\text{SL}_2$-tilings that satisfy $m_{i,i}=0$, for $i\in\mathbb{Z}$, because, as we have seen, this condition implies the stronger antisymmetry condition $m_{j,i}=-m_{i,j}$, for $i,j\in\mathbb{Z}$.

It is sometimes convenient to consider an infinite frieze to be in the form of Figure~\ref{fig8}, where the entries $m_{i,j}$ with $i<j$ are omitted; in this case we say that the frieze is in \emph{standard form}. The entries of the second row of an infinite frieze in standard form are either all $1$ or all $-1$. It is the third row of the frieze that is of particular interest to us; if all entries of the second row are 1, then the third row is called the \emph{quiddity sequence} of the infinite frieze.

\begin{definition}
Suppose that $\mathbf{M}$ is an infinite frieze with $m_{i+1,i}=1$, for $i\in\mathbb{Z}$. The \emph{quiddity sequence} of $\mathbf{M}$ is the sequence $m_{i+1,i-1}$, for $i\in\mathbb{Z}$.

If $\mathbf{M}$ is an infinite frieze with $m_{i+1,i}=-1$, for $i\in\mathbb{Z}$, then the \emph{quiddity sequence} of $\mathbf{M}$ is defined to be the quiddity sequence of $-\mathbf{M}$.
\end{definition}

Let us examine the relationship between infinite friezes and pairs of bi-infinite paths in $\mathscr{F}$. Any infinite frieze $\mathbf{M}$ satisfies $m_{i,i}=0$, for $i\in\mathbb{Z}$. Using the formula $m_{i,j}=a_id_j-b_ic_j$, with the usual notation for bi-infinite paths, we have that $a_id_i-b_ic_i=0$, and hence $a_i/b_i=c_i/d_i$, for $i\in\mathbb{Z}$. Thus, in this special case, the two bi-infinite paths coincide. 

Now, the second row of $\mathbf{M}$ (with entries $m_{i+1,i}$, for $i\in\mathbb{Z}$) consists entirely of $1$s or entirely of $-1$s. Since $a_{i+1}b_{i}-a_{i}b_{i+1}=-1$, for $i\in\mathbb{Z}$, we see that the formula $m_{i,j}=a_ib_j-a_jb_i$ for $i,j\in\mathbb{Z}$ gives rise to an infinite frieze whose second row comprises $-1$s, and the formula $m_{i,j}=a_jb_i-b_ja_i$ for $i,j\in\mathbb{Z}$ gives rise to an infinite frieze whose second row comprises $1$s. It does not matter which formula is used for computing images of $\widetilde{\Phi}$, since switching from one to the other merely changes the sign of the corresponding $\text{SL}_2$-tiling. For consistency, we use the latter formula.

We define $\FR_\infty$ to be the collection of all tame infinite friezes, with, as usual, the friezes $\mathbf{M}$ and $-\mathbf{M}$ identified. By identifying $\PATHP$ with a subcollection of $\PATHP\times\PATHP$ using the map $\gamma \longmapsto (\gamma,\gamma)$, we can think of $\Phi$ as a function from $\PATHP/\textnormal{SL}_2(\mathbb{Z})$ to $\FR_\infty$.

It is now a small matter to deduce the following theorem as a consequence of Theorem~\ref{thm1}.

\begin{theorem}\label{thm3}
The map $\Phi\colon\PATHP/\textnormal{SL}_2(\mathbb{Z})\longrightarrow \FR_\infty$ is a one-to-one correspondence.
\end{theorem}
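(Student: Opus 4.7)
The plan is to deduce Theorem~\ref{thm3} from Theorem~\ref{thm1} by restricting $\widetilde{\Phi}$ to the diagonal subset $\{(\gamma,\gamma):\gamma\in\PATHP\}$ of $\PATHP\times\PATHP$, using the sign-chain observation already established in the text preceding the statement.

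First I would verify that the diagonal restriction lands in $\FR_\infty$. If $\gamma$ has vertices $a_i/b_i$ with $a_ib_{i+1}-a_{i+1}b_i=1$, the formula $m_{i,j}=a_ib_j-b_ia_j$ is visibly antisymmetric in $i,j$, so the associated tame $\text{SL}_2$-tiling $\mathbf{M}=\widetilde{\Phi}(\gamma,\gamma)$ satisfies $m_{j,i}=-m_{i,j}$ and in particular $m_{i,i}=0$, i.e.~$\mathbf{M}\in\FR_\infty$. The diagonal embedding is $\text{SL}_2(\mathbb{Z})$-equivariant, so it descends to a well-defined map $\iota\colon\PATHP/\text{SL}_2(\mathbb{Z})\to(\PATHP\times\PATHP)/\text{SL}_2(\mathbb{Z})$, and the restricted $\Phi$ may be written as $\Phi_0\circ\iota$, where $\Phi_0$ is the bijection of Theorem~\ref{thm1}.

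Injectivity is then essentially free: if $A(\gamma,\gamma)=(\gamma',\gamma')$ for some $A\in\text{SL}_2(\mathbb{Z})$, then $A\gamma=\gamma'$, so $[\gamma]=[\gamma']$ in $\PATHP/\text{SL}_2(\mathbb{Z})$, and combining with the injectivity of $\Phi_0$ yields injectivity of the restricted map.

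Surjectivity is the substantive step, but the required argument is exactly the one carried out explicitly in the text immediately before the definition of an infinite frieze. Given $\mathbf{M}\in\FR_\infty$, Theorem~\ref{thm1} provides $(\gamma,\delta)$ with $\widetilde{\Phi}(\gamma,\delta)=\pm\mathbf{M}$; writing the vertices of $\gamma$ as $a_i/b_i$ and of $\delta$ as $c_i/d_i$ in the usual sign conventions, the condition $m_{i,i}=a_id_i-b_ic_i=0$ together with the coprimality of each pair forces $(c_i,d_i)=\varepsilon_i(a_i,b_i)$ for some $\varepsilon_i\in\{\pm1\}$, and the successive-vertex determinant condition $c_id_{i+1}-c_{i+1}d_i=1$ pins down $\varepsilon_i\varepsilon_{i+1}=1$, so all the $\varepsilon_i$ coincide. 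Thus $\delta=\gamma$ as a path in $\mathscr{F}$ (possibly after a global sign flip of the representation, which merely toggles $\mathbf{M}\leftrightarrow-\mathbf{M}$ and is absorbed by the $\pm$ identification in $\SL$), giving $\Phi([\gamma])=\pm\mathbf{M}$. The only genuinely non-cosmetic step is this sign-chain argument; once it is in hand, Theorem~\ref{thm3} is an immediate corollary of Theorem~\ref{thm1}.
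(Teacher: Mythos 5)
Your proposal is correct and follows essentially the same route as the paper: Theorem~\ref{thm3} is deduced from Theorem~\ref{thm1} by restricting to the diagonal, the forward inclusion coming from the antisymmetry of $m_{i,j}=a_jb_i-b_ja_i$, and the converse from the fact that $m_{i,i}=0$ forces $\gamma=\delta$. Your explicit sign-chain argument ($\varepsilon_i\varepsilon_{i+1}=1$, so all signs agree) is exactly the uniqueness observation the paper carries out in the introduction and invokes tersely in Section~\ref{sec4}.
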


Thus we see that while \emph{two} bi-infinite paths are needed to specify a tame $\text{SL}_2$-tiling, only \emph{one} bi-infinite path is needed to specify a tame infinite frieze. For example, consider the path
\[
\gamma=\langle\,\dotsc,\tfrac35,\tfrac23,\tfrac12,1,0,-1,-\tfrac12,-\tfrac23,-\tfrac35,\dotsc\rangle
\]
illustrated in Figure~\ref{fig10}. Let $F_i$ denote the $i$th Fibonacci number, where $F_0=0$, $F_1=1$ and $F_{i+1}=F_{i}+F_{i-1}$, for $i=1,2,\dotsc$. Let $(s_i)$ denote the bi-infinite sequence with $s_0,s_1,s_2,\dotsc$ equal to $-1,-1,1,1,-1,-1,\dotsc$ and $s_i=-s_{-i}$ for $i<0$. Then $\gamma$ has vertices $a_i/b_i$, for $i\in\mathbb{Z}$, where $a_i=s_iF_{|i|}$ and $b_i=-s_{|i|}F_{|i|+1}$, and it can be verified that $a_ib_{i+1}-a_{i+1}b_i=1$, for $i\in\mathbb{Z}$. The corresponding tame integer frieze has entries
\[
m_{i,j}=a_jb_i-b_ja_i= s_is_{|j|}F_{|i|}F_{|j|+1}-s_{|i|}s_jF_{|j|}F_{|i|+1},
\]
for $i,j\in\mathbb{Z}$. This is the infinite frieze shown in Figures~\ref{fig8} and~\ref{fig9}. 

\begin{figure}[ht]
\centering
\begin{tikzpicture}[scale=4]
\begin{scope}
    \clip(-1.3,-0.2) rectangle (1.3,0.75);
    \fareygraph[thin,grey]{-2}{2}{1}

	\foreach[count=\i] \x/\r/\d in {0/0.5/180,1/0.5/180,-0.5/0.25/0,1/0.25/0,-0.5/0.083/180,0.666/0.083/180,-0.6/0.0333/0,0.666/0.0333/0,-0.6/0.0125/0,0.625/0.0125/0,0.625/0.0125/0,-0.615/0.00481/0,0.625/0.00481/0}{
		\ifnum \i<7	
			\draw[gammacol,thick] (\x,0) arc (0:180:\r) node[pos=0.5,pointer={2*(\r)^(1/2)}{\d}]{};
		\else
			\draw[gammacol,thick] (\x,0) arc (0:180:\r);
		\fi
	}

	\foreach \x/\lab in {0.6/$\tfrac35$,0.666/$\tfrac23$,0.5/$\tfrac12$,1/$1$,0/$0$,-1/$-1\phantom{-}$,-0.5/$-\tfrac12\phantom{-}$,-0.666/$-\tfrac23\phantom{-}$}{
		 \node[gammacol,below] at (\x,0) {\lab};
	}

\node[above,gammacol] at (0.5,0.5) {$\gamma$};

\node at (0,0.2) {\footnotesize$2$};
\node at (0.92,0.06) {\footnotesize$-1$};
\node at (-1.08,0.06) {\footnotesize$-1$};
\node at (0.47,0.08) {\footnotesize$1$};
\node at (-0.47,0.08) {\footnotesize$1$};

\end{scope}
\end{tikzpicture}
\caption{The bi-infinite path $\gamma=\langle\,\dotsc,\tfrac35,\tfrac23,\tfrac12,1,0,-1,-\tfrac12,-\tfrac23,-\tfrac35,\dotsc\rangle$}
\label{fig10}
\end{figure}
 
The integer $2$ is marked just above the vertex $0$ of the path $\gamma$ in Figure~\ref{fig10} to indicate that, in navigating the Farey graph, $\gamma$ takes the second right turn at $0$. And the integer $-1$ is marked at the vertex $1$ because $\gamma$ takes the first left turn at $1$. Continuing in this way we can construct a bi-infinite sequence of integers, called the \emph{itinerary} for $\gamma$, which comprises directions for $\gamma$ to navigate the Farey graph, where we use positive integers for right turns and negative integers for left turns (and $0$ for about turns). 

We can formalise this definition as follows. Here we use the facts that the neighbours of $\infty$ in $\mathscr{F}$ are the integers, and elements of $\text{SL}_2(\mathbb{Z})$ are automorphisms of $\mathscr{F}$, so they preserve adjacency.

\begin{definition}
Let $\gamma=\langle\,\dotsc,v_{-1},v_0,v_1,v_2,\dotsc\rangle$ be a bi-infinite path in $\mathscr{F}$. For each index $i$, choose $g_i\in\text{SL}_2(\mathbb{Z})$ such that $g_i(v_{i-1})=0$ and $g_i(v_i)=\infty$, and define $e_i=g_i(v_{i+1})$. The bi-infinite sequence of integers $[\,\dotsc,e_{-1},e_0,e_1,e_2,\dotsc]$ is called the \emph{itinerary} for~$\gamma$. The itinerary $[e_1,e_2,\dots,e_{n-1}]$ for a finite path $\langle v_0,v_1,\dots,v_n\rangle$ is defined in a similar way.
\end{definition}

We use square-bracket notation for itineraries because that notation is used in the theory of continued fractions, and itineraries are intimately related to integer continued fractions. Indeed, continued fractions featured significantly in Coxeter's original work on positive friezes \cite{Co1971}, and they play an important role here too. For reasons of concision, however, we have elected to present our work without appealing to continued fractions. (See \cite{MoOv2019} for a detailed account of the relationship between continued fractions, frieze patterns, triangulations and relations in the modular group.)

Observe that the itinerary of a bi-infinite path specifies the path uniquely, up to $\text{SL}_2(\mathbb{Z})$ equivalence of paths.  

The particular path illustrated in Figure~\ref{fig10} has itinerary $[\,\dotsc,-1,1,-1,2,-1,1,-1,\dotsc]$, where $2$ is the entry of the sequence at index $0$. This sequence coincides with the quiddity sequence of the corresponding frieze (see the third row of Figure~\ref{fig8}), and in fact this observation holds more generally (see Theorem~\ref{thmZ}). As a consequence, we deduce the relatively elementary fact that \emph{any} bi-infinite sequence of integers is the quiddity sequence of some tame infinite frieze. 

Next we consider  positive infinite friezes (all of which are necessarily tame). Let $\PATHP^+$ be the set of clockwise bi-infinite paths, and let $\FR^+_\infty$ denote the collection of positive infinite friezes (which we identify with a subset of $\FR_\infty$ by the map $\mathbf{M}\longmapsto \pm\mathbf{M}$). Recall that the limits $\gamma_{-\infty}$ and $\gamma_{\infty}$ of a path $\gamma$ from $\PATHP^+$ may be equal. Given the results we have seen so far, the next theorem should come as no surprise. 

\begin{theorem}\label{thm4}
The map $\Phi\colon\PATHP^+/\textnormal{SL}_2(\mathbb{Z})\longrightarrow \FR_\infty^+$ is a one-to-one correspondence. 
\end{theorem}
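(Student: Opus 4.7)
The plan is to leverage Theorem~\ref{thm3}, which already establishes that $\Phi\colon\PATHP/\text{SL}_2(\mathbb{Z})\to\FR_\infty$ is a bijection. Restricting this bijection to $\PATHP^+/\text{SL}_2(\mathbb{Z})$ automatically gives an injection, so the entire proof reduces to identifying the image of the restriction with $\FR^+_\infty$. Equivalently, for a path $\gamma\in\PATHP$ with vertex labels $a_i/b_i$ chosen so that $a_ib_{i+1}-a_{i+1}b_i=1$, and corresponding frieze $\mathbf{M}=\Phi(\gamma)$ with entries $m_{i,j}=a_jb_i-b_ja_i$, it suffices to show that $\mathbf{M}\in\FR^+_\infty$ if and only if $\gamma\in\PATHP^+$.

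The key observation is that $m_{i,j}=\det(\mathbf{v}_j,\mathbf{v}_i)$ for the lifted vector $\mathbf{v}_i=(a_i,b_i)\in\mathbb{R}^2$. I would write $\mathbf{v}_i=r_i(\cos\theta_i,\sin\theta_i)$ with $r_i>0$ and use the relation $\det(\mathbf{v}_i,\mathbf{v}_{i+1})=1$ to consistently lift to a strictly increasing real sequence of angles $\theta_i$ with $\theta_{i+1}-\theta_i\in(0,\pi)$. Under this lift,
\[
m_{i,j}=r_ir_j\sin(\theta_i-\theta_j),
\]
so for $i>j$ the sign of $m_{i,j}$ is positive exactly when $\theta_i-\theta_j\pmod{2\pi}$ lies in $(0,\pi)$. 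Since $(\theta_i)$ is strictly increasing by construction, $\mathbf{M}\in\FR^+_\infty$ is equivalent to the single global condition $\sup_i\theta_i-\inf_i\theta_i\leq\pi$.

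The next step is to translate this angular condition into the clockwise-path condition on $\gamma$. The modified Cayley transform $z\mapsto(iz+1)/(z+i)$ has real derivative $2$ at $z=0$, and since it is conformal, increasing $z\in\mathbb{R}_\infty$ corresponds to counterclockwise motion on $\mathbb{S}$. On the other hand, $a_i/b_i=\cot\theta_i$, so increasing $\theta_i$ strictly decreases $z_i=a_i/b_i$, and an advance of exactly $\pi$ in $\theta_i$ amounts to one complete traversal of $\mathbb{R}_\infty$. Composing these two facts, the image $\overline{v}_i$ of $v_i$ on $\mathbb{S}$ moves clockwise as $\theta_i$ increases, and one revolution of $\mathbb{S}$ corresponds to an increase of exactly $\pi$ in $\theta_i$. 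Therefore the condition $\sup_i\theta_i-\inf_i\theta_i\leq\pi$ is precisely the condition that the $\overline{v}_i$ traverse $\mathbb{S}$ strictly clockwise without exceeding one revolution, which is the definition of $\gamma\in\PATHP^+$ (and accommodates both the case $\gamma_{-\infty}\neq\gamma_\infty$ and the closed-loop case $\gamma_{-\infty}=\gamma_\infty$).

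The main obstacle is the orientation bookkeeping in the third paragraph: showing that counterclockwise rotation of $\mathbf{v}_i\in\mathbb{R}^2$ maps to clockwise motion of $\overline{v}_i\in\mathbb{S}$ under the composed double cover and Cayley transform, and that a full $\pi$-rotation in $\mathbb{R}^2$ corresponds to one complete revolution of $\mathbb{S}$. A minor subtlety, absorbed by the $\pm\mathbf{M}$ identification defining $\FR_\infty$, is that switching to the alternative lifts $(-a_i,-b_i)$ flips the sign of every $m_{i,j}$ without altering the clockwise character of $\gamma$.
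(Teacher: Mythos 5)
Your proposal is correct in substance, but it takes a genuinely different route from the paper's. The paper also reduces Theorem~\ref{thm4} to Theorem~\ref{thm3} and then proves the equivalence ``$\gamma$ clockwise $\Leftrightarrow$ $\mathbf{M}$ positive'' in two separate directions by normalisation: for the forward direction it applies an element of $\text{SL}_2(\mathbb{Z})$ so that the index-$1$ vertex is $\infty$, fixes the signs of the denominators $b_i$, and reads off positivity from $m_{i,j}=b_ib_j\bigl(\tfrac{a_j}{b_j}-\tfrac{a_i}{b_i}\bigr)$; for the converse it uses the explicit inverse construction $a_i=m_{i,0}$, $b_i=m_{i,1}$ and deduces the required monotone ordering of the $a_i/b_i$ from positivity of the entries. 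You instead lift to $\mathbb{R}^2$, write $m_{i,j}=r_ir_j\sin(\theta_i-\theta_j)$ for a strictly increasing angular lift with steps in $(0,\pi)$, and package both directions into the single winding condition $\sup_i\theta_i-\inf_i\theta_i\leq\pi$, which you then translate into the clockwise condition via the Cayley transform. This buys a symmetric, one-shot argument that makes the ``at most one full cycle'' content of the definition transparent; what it costs is exactly the bookkeeping you flag, plus one step you pass over a little quickly: the definition of a clockwise path is the condition that \emph{every} triple $i<j<k$ is in clockwise order (with the $v_i$ distinct), and identifying that with $\sup_i\theta_i-\inf_i\theta_i\leq\pi$ needs a short verification in both directions (if the angular extent exceeds $\pi$, one must exhibit a bad triple, e.g.\ $p<k-1<k$ where $k$ is the first index with $\theta_k-\theta_p>\pi$, using that each step is less than $\pi$; and when the extent equals $\pi$ one uses that the sup and inf are not attained). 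One small slip in your final remark: for the single-path frieze formula $m_{i,j}=a_jb_i-b_ja_i$, replacing every lift $(a_i,b_i)$ by $(-a_i,-b_i)$ leaves every $m_{i,j}$ \emph{unchanged} (the sign flips twice); the sign ambiguity in $\pm\mathbf{M}$ comes from the choice between the two formulas $a_ib_j-a_jb_i$ and $a_jb_i-b_ja_i$, not from the lift. This is harmless here -- indeed, since $m_{j+1,j}=a_jb_{j+1}-b_ja_{j+1}=1>0$ under your conventions, the positive representative of $\pm\mathbf{M}$ is necessarily $\mathbf{M}$ itself, so your angular criterion tests exactly the right matrix.
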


We demonstrate the correspondence of this theorem with two examples, illustrated in Figure~\ref{fig11}. Each of the two subfigures shows a clockwise bi-infinite path in the Farey graph.

\begin{figure}[ht]
\centering
\begin{tikzpicture}[scale=2.5]

\begin{scope}
\discfareygraph[thin,grey]{4};

\node[] at (0:\radplus) {$1$};
\node[] at (180:\radplus) {$-1$};

\node[gammacol] at (-90:\radplus) {$0$};
\node[gammacol] at ({2*atan(1/2)-90}:\radplus) {$\tfrac12$};
\node[gammacol] at ({-2*atan(1/2)-90}:\radplus) {$-\tfrac12$};
\node[gammacol] at ({2*atan(2/3)-90}:\radplus) {$\tfrac23$};
\node[gammacol] at ({-2*atan(2/3)-90}:\radplus) {$-\tfrac23$};

\foreach \n in {1,...,20}{
	\hgline[gammacol,thick](0,0)(2*atan((\n-1)/\n)-90:2*atan(\n/(\n+1))-90:\rad);
	\hgline[gammacol,thick](0,0)(-2*atan(\n/(\n+1))-90:-2*atan((\n-1)/\n)-90:\rad);
}

\dirhgline[gammacol,draw=none,thick](0.5,0.8,204)(-90:2*atan(1/2)-90:\rad);
\dirhgline[gammacol,draw=none,thick](0.5,0.4,240)(2*atan(1/2)-90:2*atan(2/3)-90:\rad);
\dirhgline[gammacol,draw=none,thick](0.48,0.8,155)(-2*atan(1/2)-90:-90:\rad);
\dirhgline[gammacol,draw=none,thick](0.5,0.4,122)(-2*atan(2/3)-90:-2*atan(1/2)-90:\rad);

 \node[gammacol] at (-50:0.56*\rad) {$\gamma$};

\node at (-1.15,-1.15) {(a)}; 
\end{scope}

\begin{scope}[xshift=3cm]
\begin{scope}[scale=1.35,shift={(-0.7,-0.4)}]  	
    \clip(-0.2,-0.5) rectangle (1.2,1);
    \fareygraph[thin,grey]{-2}{2}{1};
	
	\foreach[count=\i] \x/\r/\d in {0.6/0.05/180,0.666/0.0208/180,1/0.166/180,1/0.5/0,0.5/0.25/180}{
		\ifnum \i>2	
			\draw[gammacol,thick] (\x,0) arc (0:180:\r) node[pos=0.5,pointer={1.9*\r}{\d}]{};
		\else
			\draw[gammacol,thick] (\x,0) arc (0:180:\r);
		\fi
	}
	
	\foreach \x/\lab in {0/0,1/1,0.5/$\tfrac12$,0.666/$\tfrac23$,0.6/$\tfrac35$}{
		 \node[gammacol,below] at (\x,0) {\lab};
	}
	
	\draw[thin,draw=grey] (0.618,0) -- (0.618,-0.3) node[below]{\footnotesize$\tfrac12(\sqrt5-1)$};

 \node[gammacol,above] at (0.5,0.5) {$\gamma$};
	
\end{scope}

\node at (-1.15,-1.15) {(b)}; 
\end{scope}

\end{tikzpicture}
\caption{Two pairs of clockwise bi-infinite paths in the Farey graph}
\label{fig11}
\end{figure}

Figure~\ref{fig11}(a) illustrates the path
\[
\gamma =\left\langle\,\dotsc ,\tfrac34,\tfrac23,\tfrac12,0,-\tfrac12,-\tfrac23,-\tfrac34,\dotsc \right\rangle. 
\]
This path has vertices $a_i/b_i$,  where $a_i=-i$ and $b_i=|i|+1$, so $a_ib_{i+1}-a_{i+1}b_i=1$, for $i\in\mathbb{Z}$, and the corresponding positive infinite frieze $\mathbf{M}$ has entries
\[
m_{i,j}=a_jb_i-b_ja_i=(|j|+1)i-(|i|+1)j,
\]
for $i,j\in\mathbb{Z}$. It is displayed in Figure~\ref{fig12}(a). Notice that the quiddity sequence of this frieze is $\dotsc,2,2,4,2,2,\dotsc$, which is the itinerary for $\gamma$.

Figure~\ref{fig11}(b) illustrates the clockwise bi-infinite path
\[
\gamma =\left\langle\,\dotsc ,\tfrac8{13},\tfrac35,\tfrac12,0,1,\tfrac23,\tfrac58,\dotsc \right\rangle. 
\]
This path has vertices $a_i/b_i$, for $i\in\mathbb{Z}$, where 
\[
a_i=\begin{cases}-F_{2i-1},& i>0,\\  F_{-2i},& i\leq 0, \end{cases}
\quad\text{and}\quad
b_i=\begin{cases}-F_{2i},& i>0,\\  F_{-2i+1},& i\leq 0, \end{cases}
\]
and it can be verified that $a_ib_{i+1}-a_{i+1}b_i=1$, for $i\in\mathbb{Z}$. The corresponding positive infinite frieze $\mathbf{M}$ with entries $m_{i,j}=a_jb_i-b_ja_i$, for $i,j\in\mathbb{Z}$, is displayed in Figure~\ref{fig12}(b). Again, the quiddity sequence of the frieze (namely $\dotsc,3,3,1,2,3,3,\dotsc$) is the itinerary for $\gamma$. 

For the path $\gamma$ in Figure~\ref{fig11}(a), the two limits $\gamma_{-\infty}=1$ and $\gamma_\infty=-1$  are distinct. In contrast, for the path $\gamma$ in Figure~\ref{fig11}(b), $\gamma_{-\infty}$ and $\gamma_{\infty}$ both equal  $\tfrac12(\sqrt5-1)$, an \emph{irrational} number (the reciprocal of the golden ratio). In Section~\ref{sec8}, where we explore circumstances such as these more carefully, we shall see that in this case the entry 1 appears in infinitely many rows of the infinite frieze in standard form (see Theorem~\ref{thm99}); in this particular case there is a 1 in every row.

\begin{figure}[ht]
\begin{subfigure}[b]{0.515\textwidth}
\begingroup
\hspace*{-2pt}{\scriptsize$
  \vcenter{
  \xymatrix @-0.94pc @!0 {
    & 0 && 0 && 0 && 0 && 0 && 0 && 0 & \\
    && 1 && 1 && 1 && 1 && 1 && 1 && 1 && \\
 \raisebox{-4pt}{$\dotsc$}   & 2 && 2 && 2 && 4 && 2 && 2 && 2 & &\hspace*{-4pt}\raisebox{-4pt}{$\dotsc$}  \\        
    && 3 && 3 && 7 && 7 && 3 && 3 && 3 && \\
    & 4 && 4 &&10 && 12 && 10 && 4 && 4 & \\  
        && 5 && 13 && 17 && 17 && 13 && 5 && 5 && \\
    &  &&  &&&  & \vdots &&  &&  &&  &  
                        }
          }
$}
\endgroup
\vspace*{-0.2cm}
\caption{}
\label{fig10a}
\end{subfigure}
\begin{subfigure}[b]{0.485\textwidth}
\begingroup
{\scriptsize$
  \vcenter{
  \xymatrix @-0.94pc @!0 {
    & 0 && 0 && 0 && 0 && 0 && 0 && 0 & \\
    && 1 && 1 && 1 && 1 && 1 && 1 &&  1 &&\\
\raisebox{-4pt}{$\dotsc$}   &3& & 3 &&3 && 1 && 2 && 3 && 3 && \hspace*{-4pt}\raisebox{-4pt}{$\dotsc$}  \\        
    &&8&& 8 && 2 && 1 && 5 && 8 && 8 &&   \\
    &21 && 21 && 5 &&1 && 2 && 13 && 21 & \\   
      &&55  && 13 && 2 && 1 && 5 && 34 && 55 &&   \\
    &  &&  &&&  & \vdots &&  &&  & &
                        }
          }
$}
\endgroup
\vspace*{-0.2cm}
\caption{}
\label{fig10b}
\end{subfigure}
\caption{Positive infinite friezes corresponding to the paths of Figure~\ref{fig11}}
\label{fig12}
\end{figure}

Next we look at how to classify positive infinite friezes by their quiddity sequences. We recall from the start of this introduction that Conway and Coxeter used triangulated polygons to determine the (finite) quiddity cycles of positive friezes. 

\begin{definition}
A \emph{cycle sequence} is a finite sequence of positive integers obtained by removing the final term from any one of the cyclically-equivalent sequences that makes up the quiddity cycle of some positive frieze. We say that a bi-infinite sequence of integers $w$ is \emph{acyclic} if no cycle sequence appears as a finite subsequence of contiguous terms of~$w$.
\end{definition}

For example, the quiddity cycle for the frieze of Figure~\ref{fig1} consists of $1,2,2,3,1,2,4$ and its cyclically-equivalent sequences. Thus $1,2,2,3,1,2$ is a cycle sequence, and so is $2,2,3,1,2,4$. The phrase `cycle sequence' is used because such sequences of positive integers correspond to itineraries of finite paths that are simple cycles in the Farey graph, as we shall see. 

\begin{theorem}\label{thm5}
A bi-infinite sequence of positive integers is the quiddity sequence of a positive infinite frieze if and only if it is acyclic.
\end{theorem}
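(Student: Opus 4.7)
The plan is to combine Theorem~\ref{thm4} with the identification of quiddity sequences and itineraries to reduce the statement to a purely geometric claim about paths in $\mathscr{F}$, and then to exploit the Conway--Coxeter correspondence to translate acyclicity into injectivity of the path.

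By Theorem~\ref{thm4}, the map $\Phi$ is a bijection $\PATHP^+/\text{SL}_2(\mathbb{Z})\to \FR_\infty^+$, and by Theorem~\ref{thmZ} the quiddity sequence of $\Phi(\gamma)$ equals the itinerary of $\gamma$. Since every bi-infinite sequence of integers is the itinerary of some bi-infinite path in $\mathscr{F}$, uniquely up to $\text{SL}_2(\mathbb{Z})$-equivalence, Theorem~\ref{thm5} is equivalent to the statement: a bi-infinite sequence $w$ of positive integers is the itinerary of some \emph{clockwise} bi-infinite path in $\mathscr{F}$ if and only if $w$ is acyclic. I carry out the proof at the level of paths.

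For the necessity direction, assume $\gamma$ is a clockwise bi-infinite path with itinerary $w$, and suppose for contradiction that a contiguous subsequence $[e_{i+1},\dots,e_{i+n-1}]$ of $w$ is a cycle sequence. By the definition of cycle sequence, adjoining one more positive integer produces the quiddity cycle of a triangulated $n$-gon; the Farey-graph form of the Conway--Coxeter theorem used in \cite{MoOvTa2015} identifies such quiddity cycles with itineraries of simple closed $n$-cycles in $\mathscr{F}$, so this cycle sequence is itself the itinerary of such a closed cycle. Since itineraries determine finite paths up to $\text{SL}_2(\mathbb{Z})$-equivalence, the subpath $\langle v_i,v_{i+1},\dots,v_{i+n}\rangle$ of $\gamma$ must also close up, i.e.\ $v_i=v_{i+n}$, contradicting that the vertices of a clockwise bi-infinite path are distinct.

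For the sufficiency direction, assume $w$ is acyclic and let $\gamma$ be any bi-infinite path in $\mathscr{F}$ realising $w$ as its itinerary. I first establish injectivity: if $v_p=v_q$ for some $p<q$ chosen so that $q-p$ is minimal, then $\langle v_p,\dots,v_q\rangle$ is a simple closed cycle in $\mathscr{F}$ whose itinerary $[e_{p+1},\dots,e_{q-1}]$ consists of positive integers, and the Conway--Coxeter correspondence again forces this itinerary to be a cycle sequence, contradicting acyclicity of $w$. The main remaining obstacle is promoting injectivity to the full clockwise-ordering condition on $\mathbb{R}_\infty$. To do this I use the geometric interpretation that a positive entry $e_i$ means $v_{i+1}$ lies strictly clockwise from $v_{i-1}$ in the cyclic order on the neighbours of $v_i$: the hyperbolic edge from $v_{i-1}$ to $v_i$ separates $\mathbb{R}_\infty$ into two open arcs, and positivity of $e_i$ forces $v_{i+1}$ into a prescribed one. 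An induction on $N$ then shows that for each finite subpath $\langle v_{-N},\dots,v_N\rangle$ the distinct vertices lie in clockwise order on $\mathbb{R}_\infty$, because at each inductive step the new vertex $v_{j+1}$ is forced into an arc disjoint from the one containing all previous vertices. Taking $N\to\infty$ shows that $\gamma$ is clockwise, completing the proof.
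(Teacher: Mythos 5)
Your reduction of the theorem via Theorems~\ref{thm4} and~\ref{thmZ} and your necessity argument are sound and essentially follow the paper's route (the paper packages the Conway--Coxeter step as Lemma~\ref{lemB} and proves it self-containedly rather than citing \cite{MoOvTa2015}). The genuine gap is in the sufficiency direction, at the step promoting injectivity to the clockwise condition. Positivity of $e_j$ places $v_{j+1}$ in the open arc running clockwise from $v_j$ to $v_{j-1}$; but if $v_{-N},\dots,v_{j-1},v_j$ are already in clockwise order, that is precisely the arc which \emph{contains} all of the earlier vertices $v_{-N},\dots,v_{j-2}$, not an arc disjoint from them. So your inductive step is false as stated: local positivity of the turns alone cannot prevent $v_{j+1}$ from landing among the earlier vertices -- indeed this is exactly what happens for positive sequences that are not acyclic. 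What is missing is a separation argument, i.e.\ the paper's Lemma~\ref{lemA} (equivalently, the non-crossing of Farey edges): if $v_{j+1}$ lay in the open arc between two consecutive earlier vertices $v_{m-1}$ and $v_m$, then the length-one path $\langle v_j,v_{j+1}\rangle$ would have its endpoints in different components of $\mathbb{R}_\infty-\{v_{m-1},v_m\}$, forcing $v_{j+1}\in\{v_{m-1},v_m\}$, which contradicts either the openness of the arc or the injectivity you established. Only with such an argument (which is where acyclicity, through injectivity, actually does its work) does the induction close, and it is this use of Lemma~\ref{lemA} that your write-up omits.

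There is a secondary, smaller gap in the injectivity step itself: you apply the Conway--Coxeter correspondence to the minimal closed subpath $\langle v_p,\dots,v_q\rangle$, but Lemma~\ref{lemB} (and the correspondence in \cite{MoOvTa2015}) concerns \emph{clockwise} simple closed paths, whereas your cycle is only known to be simple with positive interior turns. That such a cycle must be clockwise is true, but it requires justification (again via non-crossing of Farey edges or a Lemma~\ref{lemA}-type argument); without it, positivity of the itinerary does not yet yield that the itinerary is a cycle sequence. The paper sidesteps both issues by arguing directly with the arcs $I_j$ and Lemma~\ref{lemA}, obtaining in a single dichotomy that either $\gamma$ is clockwise or it contains a clockwise simple closed subpath; your two-stage plan (injectivity first, then ordering) can be repaired, but each stage needs the separation lemma you never invoke.
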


For an example, the bi-infinite sequence $\dotsc,8,8,1,2,2,3,1,2,8,8,\dotsc$ is not the quiddity sequence of a positive infinite frieze because it is not acyclic -- it contains the cycle sequence $1,2,2,3,1,2$ as a subsequence of contiguous terms. On the other hand, because every cycle sequence has an entry 1 (which can be verified combinatorially using triangulated polygons), we see that any bi-infinite sequence of integers greater than 1 is acyclic and therefore is the quiddity sequence of some positive infinite frieze.

Another application of our procedure for classifying $\text{SL}_2$-tilings concerns \emph{tame friezes}. A tame frieze is an integer frieze that is subject to a tameness condition, which we will come to shortly. And an integer frieze is simply a bi-infinite strip of integers of the same form as a positive frieze, but with integer entries, not necessarily positive. See Figure~\ref{fig13} for an example.

\begin{figure}[ht]
\begingroup
\[
  \vcenter{
  \xymatrix @-0.5pc @!0 {
&& 0 && 0 && 0 &&  0 && 0 && 0 && 0 && 0 &\\
& 1 && 1 && 1 && 1 && 1 && 1 && 1 && 1 && \\
&& 3 && 1 && 1 &&  -1\phantom{-} && 1 && 1 && 3 && 1 &\\
\dotsb& 2 && 2 && 0 && -2\phantom{-} && -2\phantom{-} && 0 && 2 && 2 &&\dotsb \\
&& 1 && -1\phantom{-} && -1\phantom{-} &&  -3\phantom{-} && -1\phantom{-} && -1\phantom{-} && 1 && -1\phantom{-} &\\
& -1\phantom{-} && -1\phantom{-} && -1\phantom{-} && -1\phantom{-} && -1\phantom{-} && -1\phantom{-} && -1\phantom{-} && -1\phantom{-} && \\
&& 0 && 0 && 0 &&  0 && 0 && 0 && 0 && 0 &
                        }
          }
\]
\endgroup
\caption{A tame  frieze in standard form}
\label{fig13}
\end{figure}

Just as for positive friezes, the \emph{order} of an integer frieze is the number of rows minus one. Observe that the second and second-last rows are each composed entirely of 1s or entirely of $-1$s. 

A feature of our approach is to consider friezes as types of $\text{SL}_2$-tilings; let us see how to do this for an integer frieze of order $n$. Following the procedure for infinite friezes, we begin by rotating the frieze through $45^\circ$ clockwise to give a `diagonal strip' of integers, which we will complete to a full bi-infinite matrix $\mathbf{M}$. Let us assume that the top row of $0$s of the integer frieze, once rotated, becomes the leading diagonal of terms $m_{i,i}$, for $i\in\mathbb{Z}$, of $\mathbf{M}$, in which case the bottom row of $0$s becomes the diagonal of terms $m_{i+n,i}$, for $i\in\mathbb{Z}$, and the other entries are $m_{i,j}$, for $0< i-j< n$.

Two possibilities arise. Either the entries from the second row of the integer frieze (all $1$ or all $-1$) differ in sign from the entries in the second-last row, or else the entries in these two rows are all the same. In the first case, we complete $\mathbf{M}$ by defining $m_{i+n,j}=m_{i,j+n}=m_{i,j}$ for $i,j\in\mathbb{Z}$, and in the second case we define $m_{i+n,j}=m_{i,j+n}=-m_{i,j}$ for $i,j\in\mathbb{Z}$. Clearly, the resulting bi-infinite matrix is an $\text{SL}_2$-tiling. We will prove that if $\mathbf{M}$ is tame then it is the unique completion of the integer frieze to a tame $\text{SL}_2$-tiling. 

Suppose then that $\mathbf{M}$ is \emph{any} completion of the integer frieze $m_{i,j}$, for $0\leq i-j\leq n$, to a tame $\text{SL}_2$-tiling. Since $m_{i,i}=0$, for $i\in\mathbb{Z}$, we see that $\mathbf{M}$ is an infinite frieze, so there is a single bi-infinite path $\gamma$ in $\mathscr{F}$ with vertices $a_i/b_i$ (where $a_ib_{i+1}-a_{i+1}b_i=1$) such that $m_{i,j}=a_jb_i-b_ja_i$, for $i,j\in\mathbb{Z}$. Since $m_{i+n,i}=0$, for $i\in\mathbb{Z}$, it follows that $a_{i+n}b_i-a_ib_{i+n}=0$, and hence $a_{i+n}b_i=a_ib_{i+n}$, for $i\in\mathbb{Z}$. Consequently, either $a_{i+n}=a_i$ and $b_{i+n}=b_i$ for $i\in\mathbb{Z}$, or $a_{i+n}=-a_i$ and $b_{i+n}=-b_i$ for $i\in\mathbb{Z}$. Using the formula $m_{i,j}=a_jb_i-b_ja_i$, we see that in the first case $m_{i+n,j}=m_{i,j+n}=m_{i,j}$ for $i,j\in\mathbb{Z}$, and in the second case $m_{i+n,j}=m_{i,j+n}=-m_{i,j}$ for $i,j\in\mathbb{Z}$, so the completion of the integer frieze is indeed unique.

Figure~\ref{fig13A} exhibits the completion of the integer frieze of Figure~\ref{fig13}. The diagonals of 0s are shaded grey to highlight the strips of Figure~\ref{fig13} that make up the tiling.

\begin{figure}[ht]
\centering
\begingroup
{\scriptsize
\[
  \vcenter{
  \xymatrix @-0.7pc @!0 {
&&&&&&&& \vdots &&&&&&&&\\
&\textcolor{grey}{0}&-1\phantom{-}&-3\phantom{-}&-2\phantom{-}&1&1&\textcolor{grey}{0}&-1\phantom{-}&-3\phantom{-}&-2\phantom{-}&1&1&\textcolor{grey}{0}&-1\phantom{-}&-3\phantom{-}&\\
&1&\textcolor{grey}{0}&-1\phantom{-}&-1\phantom{-}&0&1&1&\textcolor{grey}{0}&-1\phantom{-}&-1\phantom{-}&0&1&1&\textcolor{grey}{0}&-1\phantom{-}&\\
&3&1&\textcolor{grey}{0}&-1\phantom{-}&-1\phantom{-}&2&3&1&\textcolor{grey}{0}&-1\phantom{-}&-1\phantom{-}&2&3&1&\textcolor{grey}{0}&\\
&2&1&1&\textcolor{grey}{0}&-1\phantom{-}&1&2&1&1&\textcolor{grey}{0}&-1\phantom{-}&1&2&1&1&\\
&-1\phantom{-}&0&1&1&\textcolor{grey}{0}&-1\phantom{-}&-1\phantom{-}&0&1&1&\textcolor{grey}{0}&-1\phantom{-}&-1\phantom{-}&0&1&\\
&-1\phantom{-}&-1\phantom{-}&-2\phantom{-}&-1\phantom{-}&1&\textcolor{grey}{0}&-1\phantom{-}&-1\phantom{-}&-2\phantom{-}&-1\phantom{-}&1&\textcolor{grey}{0}&-1\phantom{-}&-1\phantom{-}&-2\phantom{-}&\\
&\textcolor{grey}{0}&-1\phantom{-}&-3\phantom{-}&-2\phantom{-}&1&1&\textcolor{grey}{0}&-1\phantom{-}&-3\phantom{-}&-2\phantom{-}&1&1&\textcolor{grey}{0}&-1\phantom{-}&-3\phantom{-}&\\
\dotsb&1&\textcolor{grey}{0}&-1\phantom{-}&-1\phantom{-}&0&1&1&\textcolor{grey}{0}&-1\phantom{-}&-1\phantom{-}&0&1&1&\textcolor{grey}{0}&-1\phantom{-}&\dotsb \\
&3&1&\textcolor{grey}{0}&-1\phantom{-}&-1\phantom{-}&2&3&1&\textcolor{grey}{0}&-1\phantom{-}&-1\phantom{-}&2&3&1&\textcolor{grey}{0}&\\
&2&1&1&\textcolor{grey}{0}&-1\phantom{-}&1&2&1&1&\textcolor{grey}{0}&-1\phantom{-}&1&2&1&1&\\
&-1\phantom{-}&0&1&1&\textcolor{grey}{0}&-1\phantom{-}&-1\phantom{-}&0&1&1&\textcolor{grey}{0}&-1\phantom{-}&-1\phantom{-}&0&1&\\
&-1\phantom{-}&-1\phantom{-}&-2\phantom{-}&-1\phantom{-}&1&\textcolor{grey}{0}&-1\phantom{-}&-1\phantom{-}&-2\phantom{-}&-1\phantom{-}&1&\textcolor{grey}{0}&-1\phantom{-}&-1\phantom{-}&-2\phantom{-}&\\
&\textcolor{grey}{0}&-1\phantom{-}&-3\phantom{-}&-2\phantom{-}&1&1&\textcolor{grey}{0}&-1\phantom{-}&-3\phantom{-}&-2\phantom{-}&1&1&\textcolor{grey}{0}&-1\phantom{-}&-3\phantom{-}&\\
&1&\textcolor{grey}{0}&-1\phantom{-}&-1\phantom{-}&0&1&1&\textcolor{grey}{0}&-1\phantom{-}&-1\phantom{-}&0&1&1&\textcolor{grey}{0}&-1\phantom{-}&\\
&3&1&\textcolor{grey}{0}&-1\phantom{-}&-1\phantom{-}&2&3&1&\textcolor{grey}{0}&-1\phantom{-}&-1\phantom{-}&2&3&1&\textcolor{grey}{0}&\\
&&&&&&&& \vdots &&&&&&&&
            }
          }
\]
}
\endgroup
\caption{A tame  frieze of order 6 in matrix form}
\label{fig13A}
\end{figure}

We use the completed form of an integer frieze to define tame friezes more formally.

\begin{definition}
Let $n$ be an integer greater than 1. A \emph{tame frieze} of order $n$ is a tame $\text{SL}_2$-tiling $\mathbf{M}$ such that $m_{i,i}=m_{i+n,i}=0$, for $i\in\mathbb{Z}$.
\end{definition}

A tame frieze is said to be in standard form if it is expressed as in Figure~\ref{fig13}. With this representation, many of the entries of the frieze are omitted -- they can be recovered using either the  periodicity rules  $m_{i+n,j}=m_{i,j+n}=m_{i,j}$ for $i,j\in\mathbb{Z}$, or the antiperiodicity rules $m_{i+n,j}=m_{i,j+n}=-m_{i,j}$ for $i,j\in\mathbb{Z}$ (whichever of the two sets of rules is appropriate for that particular frieze).

Notice that if $\mathbf{M}$ is a tame frieze of order $n$, then it is also a tame frieze of order $kn$, for any positive integer $k$. 

We consider now how to represent any tame frieze $\mathbf{M}$ using the Farey graph. We have seen that any bi-infinite path $\gamma$ in $\mathscr{F}$ with vertices $a_i/b_i$ such that $m_{i,j}=a_jb_i-b_ja_i$ must satisfy $a_{i+n}/b_{i+n}=a_i/b_i$, for $i\in\mathbb{Z}$. Hence $\gamma$ is a periodic path with period $n$ (where $n$ is not necessarily the \emph{smallest} period of the path). Conversely, it can easily be shown (and will be, later) that if $\gamma$ is a periodic bi-infinite path with period $n$, then the formulas $m_{i,j}=a_jb_i-b_ja_i$, for $i,j\in\mathbb{Z}$, define a tame frieze of order $n$.

We will relate periodic bi-infinite paths to closed paths.

\begin{definition}
A \emph{closed path} in $\mathscr{F}$ is a finite path $\langle v_0,v_1,\dots,v_n\rangle$ for which $v_0=v_n$. A \emph{simple closed path} in $\mathscr{F}$ is a closed path $\langle v_0,v_1,\dots,v_n\rangle$ in $\mathscr{F}$ for which the vertices $v_0,v_1,\dots,v_{n-1}$ are distinct.
\end{definition}

We define a \emph{clockwise} simple closed path in $\mathscr{F}$ to be a  simple closed path $\langle v_0,v_1,\dots,v_n\rangle$ in $\mathscr{F}$ for which the sequence $v_0,v_1,\dots,v_{n-1}$ is in clockwise order.

Let $\mathscr{C}_n$ denote the collection of closed paths of length $n$ in $\mathscr{F}$. We can identify $\mathscr{C}_n$ with the collection of periodic bi-infinite paths of period $n$ in $\mathscr{F}$ by mapping the closed path $\langle v_0,v_1,\dots,v_n\rangle$ to the periodic bi-infinite path $\langle\, \dots,v_{-1},v_0,v_1,v_2,\dotsc\rangle$, where $v_i= v_j$ if $i\equiv j\pmod{n}$. 

We denote by $\FR_n$ the quotient set of the collection of tame friezes of order $n$ under the usual equivalence relation that identifies an $\text{SL}_2$-tiling with its negative. By thinking of $\mathscr{C}_n$ as a collection of periodic bi-infinite paths, we can consider it to be a subcollection of $\mathscr{P}$ (and hence a subcollection of $\mathscr{P}\times \mathscr{P}$ using the identification $\gamma\longmapsto (\gamma,\gamma)$). Since $\text{SL}_2(\mathbb{Z})$ acts on $\mathscr{C}_n$, we can consider $\Phi$ to be a function from $\mathscr{C}_n/\textnormal{SL}_2(\mathbb{Z})$ to $\FR_n$.

\begin{theorem}\label{thm6}
The map $\Phi\colon\mathscr{C}_n/\textnormal{SL}_2(\mathbb{Z})\longrightarrow \FR_n$ is a one-to-one correspondence. 
\end{theorem}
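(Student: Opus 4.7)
The plan is to deduce Theorem~\ref{thm6} from Theorem~\ref{thm3} by showing that the bijection $\Phi\colon\PATHP/\textnormal{SL}_2(\mathbb{Z})\to\FR_\infty$ restricts to a bijection between $\mathscr{C}_n/\textnormal{SL}_2(\mathbb{Z})$ and $\FR_n$. Since every tame frieze of order $n$ satisfies $m_{i,i}=0$, the set $\FR_n$ embeds naturally into $\FR_\infty$; and $\mathscr{C}_n$, identified with the collection of periodic bi-infinite paths of period $n$, sits inside $\PATHP$. The action of $\textnormal{SL}_2(\mathbb{Z})$ preserves both path length and closedness, so $\mathscr{C}_n/\textnormal{SL}_2(\mathbb{Z})$ embeds in $\PATHP/\textnormal{SL}_2(\mathbb{Z})$; it therefore suffices to check that $\Phi$ carries this subset onto $\FR_n$.

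For the forward direction, take $\gamma\in\mathscr{C}_n$ extended periodically as $\langle\,\dotsc,v_{-1},v_0,v_1,\dotsc\rangle$, and choose representatives $v_i=a_i/b_i$ with $a_ib_{i+1}-a_{i+1}b_i=1$. The periodicity $v_{i+n}=v_i$ in $\mathscr{F}$ forces $(a_{i+n},b_{i+n})=\epsilon_i(a_i,b_i)$ for some $\epsilon_i\in\{\pm1\}$, and substituting into the unimodular relation gives
\[
1=a_{i+n}b_{i+n+1}-a_{i+n+1}b_{i+n}=\epsilon_i\epsilon_{i+1}(a_ib_{i+1}-a_{i+1}b_i)=\epsilon_i\epsilon_{i+1},
\]
so all the $\epsilon_i$ share a common value $\epsilon$. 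A brief calculation with $\mathbf{M}=\Phi(\gamma)$ then yields $m_{i+n,j}=\epsilon\, m_{i,j}$ for all $i,j$; in particular $m_{i+n,i}=0$, so $\mathbf{M}\in\FR_n$.

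For the reverse direction, let $\mathbf{M}\in\FR_n$ and let $\gamma$ be the path supplied by Theorem~\ref{thm3}, with vertices $a_i/b_i$ satisfying $m_{i,j}=a_jb_i-b_ja_i$. The condition $m_{i+n,i}=0$ becomes $a_ib_{i+n}=b_ia_{i+n}$, and because both $(a_i,b_i)$ and $(a_{i+n},b_{i+n})$ are coprime pairs, this forces $(a_{i+n},b_{i+n})=\epsilon_i(a_i,b_i)$ with $\epsilon_i\in\{\pm1\}$. The same multiplicative computation as above shows that all the $\epsilon_i$ agree, so $v_{i+n}=v_i$ in $\mathscr{F}$ and $\gamma$ arises from a closed path of length $n$, that is, $\gamma\in\mathscr{C}_n$.

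Injectivity is inherited directly from Theorem~\ref{thm3}, since $\textnormal{SL}_2(\mathbb{Z})$-equivalence on $\mathscr{C}_n$ is by definition the restriction of the equivalence on $\PATHP$. The only mildly technical point is the sign-consistency argument that the $\epsilon_i$ are independent of $i$, which is a one-line multiplicative calculation once the unimodular relations are written down; I do not expect this to pose any substantive obstacle.
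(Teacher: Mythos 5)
Your proposal is correct and takes essentially the same route as the paper: both deduce the result by restricting the bijection of Theorem~\ref{thm3}, checking that for the path $\gamma$ with $m_{i,j}=a_jb_i-b_ja_i$ the condition $m_{i+n,i}=0$ is equivalent (via coprimality, so $(a_{i+n},b_{i+n})=\pm(a_i,b_i)$) to $v_{i+n}=v_i$, i.e.\ to $\gamma$ coming from a closed path of length $n$. The only cosmetic difference is that your sign-consistency argument for the $\epsilon_i$ is not needed for this theorem itself (the paper uses it in the introduction to establish the (anti)periodicity and uniqueness of the completion), since $m_{i+n,i}=a_ib_{i+n}-b_ia_{i+n}$ vanishes whichever sign occurs.
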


Positive friezes are special types of tame friezes, so we can restrict the function $\Phi$ further to classify positive friezes. To do this, however, we first need to revisit the definition of a positive frieze in the context of $\text{SL}_2$-tilings. A \emph{positive frieze} of order $n$, then, is a tame frieze $\mathbf{M}$ of order $n$ that satisfies $m_{i,j}>0$, for $0<i-j<n$. Notice that the entries of $\mathbf{M}$ are \emph{not} all positive; after all, $m_{i,i}=m_{i+n,i}=0$, for $i\in\mathbb{Z}$, and, moreover, the antiperiodicity rules $m_{i+n,j}=m_{i,j+n}=-m_{i,j}$, for $i,j\in\mathbb{Z}$, imply that $\mathbf{M}$ comprises alternating diagonal `strips' of positive and negative integers separated by diagonals of $0$s. 

Let $\FR_n^+$ denote the set of positive friezes, which we identify with a subset of $\FR_n$ using the embedding $\mathbf{M}\longmapsto \pm\mathbf{M}$. We also define $\mathscr{C}_n^+$ to be the collection of clockwise simple closed paths of length $n$ in $\mathscr{F}$, a subcollection of $\mathscr{C}_n$. The group $\text{SL}_2(\mathbb{Z})$ acts on $\mathscr{C}_n^+$, so we can consider the restriction of $\Phi$ to the set $\mathscr{C}_n^+/\textnormal{SL}_2(\mathbb{Z})$.

\begin{theorem}\label{thm7}
The map $\Phi\colon\mathscr{C}_n^+/\textnormal{SL}_2(\mathbb{Z})\longrightarrow \FR_n^+$ is a one-to-one correspondence. 
\end{theorem}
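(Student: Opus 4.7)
The plan is to deduce Theorem~\ref{thm7} by restricting the bijection $\Phi\colon \mathscr{C}_n / \textnormal{SL}_2(\mathbb{Z}) \to \FR_n$ of Theorem~\ref{thm6}. Since $\mathscr{C}_n^+ \subseteq \mathscr{C}_n$ and $\FR_n^+ \subseteq \FR_n$, it suffices to establish two things: (i) if $\gamma$ is a clockwise simple closed path of length $n$, then $\Phi(\gamma)\in\FR_n^+$; and (ii) conversely, every positive frieze of order $n$ arises from such a path.

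The heart of both directions is the following key lemma, which I would prove first: if $\langle u_0,u_1,\dots,u_\ell\rangle$ is a clockwise finite path in $\mathscr{F}$ with distinct vertices, and $u_i=p_i/q_i$ are reduced representatives satisfying $p_iq_{i+1}-p_{i+1}q_i=1$, then $p_0q_\ell-q_0p_\ell>0$. To prove it, I exploit the transitivity of the $\textnormal{SL}_2(\mathbb{Z})$-action on unimodular pairs together with the fact that $\textnormal{SL}_2(\mathbb{Z})$ preserves the cyclic orientation of $\mathbb{R}_\infty$, so after normalization one may assume $u_0=\infty$ and $u_1=0$. The clockwise condition then forces $0=u_1>u_2>\cdots>u_\ell$ as real numbers, and the assertion reduces to showing $q_\ell>0$. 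One then argues by induction on $i$, using the identity
\[
p_iq_{i+1}-p_{i+1}q_i \;=\; q_iq_{i+1}(u_i-u_{i+1}),
\]
valid whenever $u_i$ and $u_{i+1}$ are finite: if $q_i>0$ and $u_i>u_{i+1}$, then $+1=q_iq_{i+1}(u_i-u_{i+1})$ forces $q_{i+1}>0$; the base case $q_1=1>0$ holds by convention.

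For direction (i), given $\gamma\in\mathscr{C}_n^+$ with vertices $v_i=a_i/b_i$, I apply the key lemma to each subpath $\langle v_j,v_{j+1},\dots,v_i\rangle$ with $0<i-j<n$. Simplicity of $\gamma$ ensures distinct vertices in the subpath, and the cyclic clockwise order of $\gamma$ restricts to linear clockwise order on any such subsequence, so the lemma yields $m_{i,j}=a_jb_i-b_ja_i>0$. For direction (ii), let $[\gamma]$ be the unique class with $\Phi([\gamma])=\mathbf{M}$, whose existence is guaranteed by Theorem~\ref{thm6}. If $v_i=v_j$ for some $0\leq i<j<n$ then $a_ib_j-a_jb_i=0$ gives $m_{j,i}=0$, contradicting positivity, so $\gamma$ is simple. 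A simple closed path in $\mathscr{F}$ has non-crossing edges, so its vertices must lie on $\mathbb{R}_\infty$ in cyclic order, whence $\gamma$ is either clockwise or counterclockwise. In the counterclockwise case, after normalizing $v_0=\infty$ and $v_1=0$, the vertex $v_2$ is a positive reduced rational $1/d_2$ with $d_2>0$, and the convention $-a_2=1$ then forces $(a_2,b_2)=(-1,-d_2)$ and hence $m_{2,0}=-d_2<0$, contradicting positivity; therefore $\gamma$ is clockwise.

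The main obstacle is the key lemma, because reconciling the algebraic sign convention $p_iq_{i+1}-p_{i+1}q_i=1$ with the geometric clockwise orientation requires careful bookkeeping of representative signs along the path. Once the lemma is available, both directions of Theorem~\ref{thm7} reduce to normalization arguments combined with the bijectivity already provided by Theorem~\ref{thm6}.
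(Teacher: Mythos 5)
Your proposal is correct. The forward direction is essentially the paper's own argument: your key lemma, proved by normalising the first edge to $(\infty,0)$ and running an induction on the signs of the denominators via $p_iq_{i+1}-p_{i+1}q_i=q_iq_{i+1}(u_i-u_{i+1})$, is exactly the ``simple inductive argument'' the paper uses to show $b_{j+1},\dots,b_i>0$ and hence $m_{i,j}=b_i>0$; packaging it as a standalone lemma about finite clockwise paths is a harmless reorganisation. The backward direction is where you genuinely diverge. The paper normalises $v_0=v_n=\infty$ with $a_0=1$, $b_0=0$, reads off $b_i=m_{i,0}>0$ for $0<i<n$ directly from positivity, and then the identity $a_i/b_i-a_{i+1}/b_{i+1}=1/(b_ib_{i+1})>0$ gives $v_1>v_2>\dots>v_{n-1}$ in one stroke, so that distinctness, cyclic order and orientation all drop out simultaneously. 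You instead argue in three stages: simplicity from $m_{j,i}\neq 0$; convex position of the vertices from the non-crossing of Farey edges; and exclusion of the counterclockwise orientation by the local computation $m_{2,0}=-d_2<0$. All three steps are sound -- the middle one, that a simple closed path in $\mathscr{F}$ must visit its vertices in their circular order, is a genuine consequence of the fact that edges of the Farey tessellation never cross, and can be derived from Lemma~\ref{lemA} applied to the complementary subpath around each edge -- but you assert it without proof and it is doing real work, whereas the paper's direct computation sidesteps any appeal to planarity. Net effect: same theorem, same forward half, and a more topological but slightly less self-contained backward half; if you keep your route, you should supply the short Jordan-curve or Lemma~\ref{lemA} argument for the convex-position claim.
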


Theorem~\ref{thm7} is equivalent to \cite[Proposition~2.2.1]{MoOvTa2015}, which is itself a reformulation using the Farey graph of Conway and Coxeter's original classification of positive friezes with triangulated polygons. In brief, any triangulated polygon can be realised as a \emph{hyperbolic} triangulated polygon within $\mathscr{F}$,  and the vertices of a hyperbolic triangulated polygon in $\mathscr{F}$ listed in clockwise order specify a clockwise simple closed path in $\mathscr{F}$. We expand on this correspondence in Section~\ref{sec6}; a full explanation is given in \cite{MoOvTa2015}. The correspondence allows us to use the Farey graph and the class $\mathscr{C}_n^+$ in place of the class of triangulated polygons. 

Let us now work the other way and refashion Theorem~\ref{thm6} using triangulated polygons rather than closed paths in the Farey graph, thereby generalising Conway and Coxeter's original classification of positive friezes to the larger class of tame friezes. Observe that any closed path in $\mathscr{F}$ lies within some hyperbolic triangulated polygon in $\mathscr{F}$. Thus Theorem~\ref{thm6} shows that there is a correspondence between closed paths in triangulated polygons  -- where we allow ourselves the freedom to traverse the diagonals of a triangulated polygon -- and tame friezes.

For example, consider the closed path $\gamma = \langle \infty, 2,1,\infty,0,-1,\infty\rangle$ illustrated in Figure~\ref{fig14}(a). We can extend this path to a periodic bi-infinite path $\widetilde{\gamma}$ in the agreed way. The itinerary for $\widetilde{\gamma}$ is the periodic bi-infinite sequence with periodic part $[1,1,-1,1,1,3]$. This finite sequence of integers encodes a set of directions for navigating a closed path in a triangulated pentagon, as shown in Figure~\ref{fig14}(b), where the labels are the terms of the itinerary for $\widetilde{\gamma}$. The corresponding tame frieze is that displayed in Figure~\ref{fig13}; observe that the quiddity sequence of the frieze and the itinerary for $\widetilde{\gamma}$ coincide.

\begin{figure}[ht]
\centering

\begin{tikzpicture}[scale=2.5]

\begin{scope}
\discfareygraph[thin,grey]{4};
\hgline[gammacol,thick,reverse directed](0,0)(90:180:\rad);
\hgline[gammacol,thick,reverse directed](0,0)(180:270:\rad);
\hgline[gammacol,thick,reverse directed](0,0)(90:360:\rad);
\hgline[gammacol,thick,reverse directed](0,0)(0:{90-2*atan(1/2)}:\rad);
\hgline[gammacol,thick,reverse directed](0,0)({90-2*atan(1/2)}:90:\rad);

\draw[gammacol,thick,reverse directed] (-90:\rad) -- (90:\rad) node[pos=0.5,right] {$\gamma$};

\pgfmathsetmacro{\R}{\rad+0.10}

\node[gammacol] at (90:\R) {$\infty$};
\node[gammacol] at (180:\R) {$-1$};
\node[gammacol] at (-90:\R) {$0$};
\node[gammacol] at (0:\R) {$1$};
\node[gammacol] at ({2*atan(2)-90}:\R) {$2$};

\node at (-1,-1.2) {(a)}; 
\end{scope}

\begin{scope}[xshift=3cm]
\begin{scope}[rotate=90]
\draw (0:\rad) \foreach \x in {72,144,...,360} {
                -- (\x:\rad)
            }-- cycle (90:\rad);	
\draw (144:\rad) -- (0:\rad); 
\draw (0:\rad) -- (-144:\rad);

\pgfmathsetmacro{\gap}{0.03}
\pgfmathsetmacro{\R}{\rad+\gap}
\pgfmathsetmacro{\L}{2*\R*sin(36)}
\pgfmathsetmacro{\r}{\rad-2*\gap}

\draw[gammacol,thick,reverse directed] (0:\R) --++(126:\L);
\draw[gammacol,thick,reverse directed] (0:\R)++(126:\L)--++(198:\L+1.6*\gap);
\draw[gammacol,thick,reverse directed] (0:\R)++(126:\L)++(198:\L+1.6*\gap)-- (0:\r);
\draw[gammacol,thick,directed] (0:\R) --++(-126:\L);
\draw[gammacol,thick,directed] (0:\R)++(-126:\L)--++(-198:\L+1.6*\gap);
\draw[gammacol,thick,directed] (0:\R)++(-126:\L)++(-198:\L+1.6*\gap)-- (0:\r);

\pgfmathsetmacro{\M}{\rad+3.5*\gap}
\node at (0:\M) {\small$3$};
\node at (72:\M) {\small$1$};
\node at (144:\M) {\small$1$};
\node at (-144:\M) {\small$1$};
\node at (-72:\M) {\small$1$};
\node at (0:\rad-12*\gap) {\small$-1$};
\end{scope}
\node at (-1,-1.2) {(b)}; 
\end{scope}

\end{tikzpicture}
\caption{(a) The closed path $\gamma=\langle \infty, 2,1,\infty,0,-1,\infty\rangle$ on $\mathscr{F}$\quad (b) A representation of $\gamma$ as a closed path in a triangulated Euclidean polygon }
\label{fig14}
\end{figure}

In \cite[Section~7]{CuHo2019} an intriguing combinatorial model for classifying tame friezes is developed, which uses  triangulations of polygons with integer labels attached to the triangles. It would be of interest to relate Theorem~\ref{thm6} to this model.

We finish this introduction with two other classes of $\text{SL}_2$-tilings that have been studied before and which can be considered within the geometric framework provided by the Farey graph. The first is a particular class of antiperiodic $\text{SL}_2$-tilings considered in \cite{MoOvTa2015}. Let $r$ and $s$ be positive integers greater than 1. Let $\mathbf{M}$ be an $\text{SL}_2$-tiling that satisfies the antiperiodicity rules $m_{i+r,j}=m_{i,j+s}=-m_{i,j}$, for $i,j\in\mathbb{Z}$, and which has an $r\times s$ submatrix of positive integers; that is, there are integers $k$ and $\ell$ for which $m_{i,j}>0$, for $i=k+1,k+2,\dots,k+r$ and $j=\ell+1,\ell+2,\dots,\ell+s$. It is proved in \cite[Proposition~3.4.1]{MoOvTa2015} that all such $\text{SL}_2$-tilings are tame. We denote the class of these tilings under the usual equivalence relation that identifies $\mathbf{M}$ and $-\mathbf{M}$ by $\SL^+(r,s)$. Let $(\mathscr{C}_r\times \mathscr{C}_s)^+$ be the collection of pairs $(\gamma,\delta)$ of clockwise simple closed paths in $\mathscr{C}_r^+\times \mathscr{C}_s^+$ that do not intersect one another; this collection is invariant under the action of $\text{SL}_2(\mathbb{Z})$.

\begin{theorem}\label{thm8}
The map $\Phi\colon(\mathscr{C}_r\times \mathscr{C}_s)^+/\textnormal{SL}_2(\mathbb{Z})\longrightarrow \SL^+(r,s)$ is a one-to-one correspondence. 
\end{theorem}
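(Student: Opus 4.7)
The plan is to derive Theorem~\ref{thm8} as a restriction of the bijection in Theorem~\ref{thm1}, by characterising exactly those pairs $(\gamma,\delta)$ whose associated tame tiling $\widetilde{\Phi}(\gamma,\delta)$ lies in $\SL^+(r,s)$.

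First I would translate the antiperiodicity conditions $m_{i+r,j}=m_{i,j+s}=-m_{i,j}$ into periodicity conditions on the paths. Writing $u_i=\binom{a_i}{b_i}$ and $v_j=\binom{c_j}{d_j}$ so that $m_{i,j}=\det(u_i\mid v_j)$, the first antiperiodicity gives $\det(u_{i+r}+u_i\mid v_j)=0$ for every $j$. Since consecutive columns $v_j$ and $v_{j+1}$ form a basis of $\mathbb{Z}^2$, it follows that $u_{i+r}=-u_i$; hence the vertex sequence of $\gamma$ in $\mathscr{F}$ is periodic of period $r$, and $\gamma$ comes from a closed path in $\mathscr{C}_r$. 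Symmetrically, $\delta$ comes from a closed path in $\mathscr{C}_s$. Running this calculation backwards shows that antiperiodicity is \emph{equivalent} to the two paths being periodic of the respective periods.

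Second I would characterise the positive-submatrix condition in terms of clockwise simplicity and non-intersection. After a cyclic reparameterisation, assume the positive block is indexed by $1\leq i\leq r$ and $1\leq j\leq s$. A shared vertex $a_{i_0}/b_{i_0}=c_{j_0}/d_{j_0}$ of $\gamma$ and $\delta$ would force $m_{i_0,j_0}=0$, and antiperiodicity then propagates a zero into every $r\times s$ block, contradicting positivity; hence $\gamma$ and $\delta$ share no vertex, which is precisely the non-intersection condition since distinct edges of $\mathscr{F}$ are pairwise non-crossing in $\mathbb{H}$. Positivity of the whole block further forces the $r$ signed rationals $u_1,\dots,u_r$ to be distinct and arranged in clockwise order on $\mathbb{R}_\infty$, so that $\gamma$ is a clockwise simple closed path of length exactly $r$; the analogous statement holds for $\delta$. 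Conversely, for $(\gamma,\delta)\in(\mathscr{C}_r\times\mathscr{C}_s)^+$, the clockwise orderings and the separation of the two vertex sets on $\mathbb{S}$ combined with the formula $m_{i,j}=\det(u_i\mid v_j)$ yield positivity of the block. With these two characterisations, the restriction of $\Phi$ lands in $\SL^+(r,s)$ and is surjective onto it, while injectivity is inherited from Theorem~\ref{thm1}.

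The main obstacle is the passage from positivity of the $r\times s$ block to clockwise simplicity of each path and to the clockwise cyclic separation of their vertex sets. The bi-infinite extensions $\widetilde{\gamma},\widetilde{\delta}$ are periodic and therefore not clockwise bi-infinite paths in the sense of Theorem~\ref{thm2}, so that result cannot be invoked verbatim; one must instead adapt its determinantal-interlacing argument to a single period. Concretely, one shows that the sign of $\det(u_i\mid v_j)$ encodes the cyclic position of $a_i/b_i$ relative to $c_j/d_j$ with respect to the chosen signed representatives, and then deduces that uniform positivity over a single period of each path forces both the clockwise cyclic ordering of the $r$ (respectively $s$) vertices and the disjointness of the two cyclic arcs they occupy. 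Once this geometric reading of the determinant is in hand, the remainder of the proof is formal.
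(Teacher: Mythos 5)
Your overall strategy---restricting the bijection of Theorem~\ref{thm1} and reading both the antiperiodicity and the positivity of a block off the determinant formula $m_{i,j}=\det(u_i\mid v_j)$---is the same as the paper's, and your basis argument (that $m_{i+r,j}=-m_{i,j}$ for all $j$ forces $u_{i+r}=-u_i$ because $v_j,v_{j+1}$ span $\mathbb{Z}^2$) is a clean version of the computation the paper does directly from equations~\eqref{eqn3}. The tiling-to-paths half (no shared vertex because no entry is $0$, clockwise simplicity from positivity of one period of denominators) also matches the paper.

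There is, however, a genuine gap in the paths-to-tiling direction, located in the sentence ``running this calculation backwards shows that antiperiodicity is equivalent to the two paths being periodic of the respective periods.'' Periodicity of the vertex sequence of $\gamma$ only gives $u_{i+r}=\varepsilon u_i$ with a fixed sign $\varepsilon=\pm1$, hence $m_{i+r,j}=\varepsilon m_{i,j}$, and the plus sign genuinely occurs: for instance the closed path $\langle 0,1,\infty,0\rangle$ (the triangle traversed anticlockwise) has a periodic lift, $u_3=u_0$, and so yields a periodic, not antiperiodic, tiling. Indeed the introduction notes that $\mathscr{C}_r\times\mathscr{C}_s$ as a whole corresponds to tilings that are either periodic or antiperiodic in each direction. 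Membership in $\SL^+(r,s)$ requires the minus sign, so you must actually prove that the lift of a \emph{clockwise simple closed} path of length $r$ satisfies $u_{i+r}=-u_i$; block positivity does not supply this (a periodic lift is perfectly compatible with a positive $r\times s$ block). This is exactly the ``sign changes each cycle'' step in the paper's proof, obtained there after using Lemma~\ref{lemA} and the shift maps to normalise so that one path runs $\infty,0,\dots$ and then tracking the signs of the denominators over one period; the same normalisation also locates where the positive block sits, which is the separation issue you flag as the main obstacle. So your sketch of adapting the interlacing argument of Theorem~\ref{thm2} to one period is on the right track for positivity, but the antiperiodic sign of the monodromy is a separate step that your claimed equivalence passes over and that must be argued from the clockwise simple closed structure.
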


Theorem~\ref{thm8} is equivalent to \cite[Theorem~2]{MoOvTa2015}; in the context of this paper, that theorem (effectively) prescribes two non-intersecting clockwise simple closed paths by their quiddity cycles.

A variety of other sets of periodic and antiperiodic $\text{SL}_2$-tilings can be classified in a similar spirit to Theorem~\ref{thm8}. For example, the class $\mathscr{C}_r\times \mathscr{C}_s$ corresponds to the set of tame $\text{SL}_2$-tilings $\mathbf{M}$ that satisfy one of $m_{i+r,j}=m_{i,j}$ or $m_{i+r,j}=-m_{i,j}$ for all $i,j\in\mathbb{Z}$, and one of  $m_{i,j+s}=m_{i,j}$ or $m_{i,j+s}=-m_{i,j}$ for all $i,j\in\mathbb{Z}$. Furthermore, one can refine this correspondence to distinguish periodic and antiperiodic rules, both horizontally and vertically (the method involves analysing clockwise simple closed subpaths of closed paths).

We pursue this no further, and instead turn our attention to another  class of $\text{SL}_2$-tilings that has been considered before (in \cite{Ov2018,MoOv2019}) -- the class $\FR_n^0$ of those tame friezes of order $n$ for which the quiddity sequence comprises positive integers (but the other entries of the frieze are not necessarily positive). This class can be identified with a subclass of $\FR_n$ through the embedding $\mathbf{M} \longmapsto \pm \mathbf{M}$, in which case we see that $\FR_n^+ \subset \FR_n^0 \subset \FR_n$. The collection   of closed paths corresponding to $\FR_n^0$  is the collection $\mathscr{C}_n^0$ of closed paths $\langle v_0,v_1,\dots,v_n\rangle$ of length $n$ for which $v_{i-1},v_{i},v_{i+1}$ is in clockwise order, for $i=1,2,\dots,n$, where $v_{n+1}=v_1$. (Such a path is not necessarily a `clockwise' path, according to our definition earlier, since it might wind round the unit circle several times and thereby fail the condition that $v_i,v_j,v_k$ is in clockwise order whenever $i<j<k$.) Observe that $\text{SL}_2(\mathbb{Z})$ acts on $\mathscr{C}_n^0$, giving us the following result (due to the referee).

\begin{theorem}\label{thm9}
The map $\Phi\colon\mathscr{C}_n^0/\textnormal{SL}_2(\mathbb{Z})\longrightarrow \FR_n^0$ is a one-to-one correspondence. 
\end{theorem}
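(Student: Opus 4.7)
The plan is to leverage Theorem~\ref{thm6}, which already gives the bijection $\Phi\colon\mathscr{C}_n/\textnormal{SL}_2(\mathbb{Z})\longrightarrow\FR_n$, and merely verify that this bijection restricts to a bijection between $\mathscr{C}_n^0/\textnormal{SL}_2(\mathbb{Z})$ and $\FR_n^0$. The whole argument therefore reduces to proving the single equivalence
\[
\gamma\in\mathscr{C}_n^0\iff\Phi(\gamma)\in\FR_n^0
\]
for $\gamma\in\mathscr{C}_n$. As a preliminary, the set $\mathscr{C}_n^0$ is $\textnormal{SL}_2(\mathbb{Z})$-invariant: elements of $\textnormal{SL}_2(\mathbb{Z})$ are orientation-preserving hyperbolic isometries of $\mathbb{H}$ and hence preserve the cyclic orientation of $\mathbb{R}_\infty$, so they preserve the condition that each consecutive triple $v_{i-1},v_i,v_{i+1}$ is in clockwise order.

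The key step is a dictionary between three objects: the quiddity sequence of $\Phi(\gamma)$, the itinerary of the periodic bi-infinite extension of $\gamma$, and the clockwise-triple condition defining $\mathscr{C}_n^0$. Invoking Theorem~\ref{thmZ}, the quiddity sequence of $\Phi(\gamma)$ coincides with the itinerary $[e_1,e_2,\dots,e_n]$ of $\gamma$. It therefore suffices to establish the following sign lemma: $e_i$ is a positive integer if and only if $v_{i-1},v_i,v_{i+1}$ is in clockwise order.

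To prove the sign lemma, fix $i$ and choose $g_i\in\textnormal{SL}_2(\mathbb{Z})$ with $g_i(v_{i-1})=0$ and $g_i(v_i)=\infty$, so that $e_i=g_i(v_{i+1})$ is a Farey neighbour of $\infty$, hence an integer. Transporting $\mathbb{R}_\infty$ to $\mathbb{S}$ via the modified Cayley transform $z\longmapsto(iz+1)/(z+i)$, the positive reals map onto the right semicircle and the negative reals onto the left semicircle, so the integers listed in clockwise order around $\mathbb{S}$ read $\infty,\dots,2,1,0,-1,-2,\dots,\infty$. Consequently $e_i>0$ precisely when $\infty,e_i,0$, i.e.\ $g_i(v_i),g_i(v_{i+1}),g_i(v_{i-1})$, is in clockwise order; by cyclic invariance this is the same as $g_i(v_{i-1}),g_i(v_i),g_i(v_{i+1})$ being in clockwise order. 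Since $g_i$ preserves cyclic orientation, this is in turn equivalent to $v_{i-1},v_i,v_{i+1}$ being in clockwise order.

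Combining the dictionary and the sign lemma yields the desired equivalence, and the theorem then follows from Theorem~\ref{thm6}. I do not anticipate a serious obstacle: the argument is orientation bookkeeping built on top of Theorems~\ref{thm6} and~\ref{thmZ}. The only delicate point is the sign lemma, where one must confirm that the modified Cayley transform sends the positive half-line onto the arc of $\mathbb{S}$ traced clockwise from $i$ (the image of $\infty$) to $-i$ (the image of $0$), so that positive itinerary entries do indeed correspond to clockwise turns.
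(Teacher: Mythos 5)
Your proposal is correct and follows essentially the same route as the paper: restrict the bijection of Theorem~\ref{thm6} to the subclasses and verify, index by index, that the clockwise-triple condition on $v_{i-1},v_i,v_{i+1}$ is equivalent to positivity of the corresponding quiddity entry, using an $\textnormal{SL}_2(\mathbb{Z})$-normalisation of the triple. The only (cosmetic) difference is that you obtain this local equivalence by combining Theorem~\ref{thmZ} (quiddity sequence equals itinerary) with an orientation check showing $e_i>0$ exactly for clockwise triples, whereas the paper proves it directly as Lemma~\ref{lemK} by computing $m_{i+1,i-1}$ after normalising $v_i=\infty$.
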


An elegant combinatorial model for classifying $\FR_n^0$ was developed in \cite{Ov2018}, using dissections of polygons into $3m$-gons (triangles, hexagons, and so forth). In \cite[Sections~4 and~5]{MoOv2019} it is explained how this combinatorial model corresponds to the sort of model described by Theorem~\ref{thm9}; indeed, \cite[Theorem~5.13]{MoOv2019} is almost equivalent to Theorem~\ref{thm9}, although framed somewhat differently.

\subsubsection*{Acknowledgements}

I gratefully acknowledge Peter J{\o}rgensen for introducing me to friezes and $\text{SL}_2$-tilings, and for discussions in which many of the ideas presented here were conceived. I also thank the referee for recommending the inclusion of Theorems~\ref{thm7}--\ref{thm9}, as well as several other valuable contributions. 

\section{Tame tilings}\label{sec2}

Here we prove Theorem~\ref{thm1}; that is, we prove that the function $\Phi\colon (\PATHP\times\PATHP)/\text{SL}_2(\mathbb{Z})\longrightarrow \SL$ is a bijection. Some of our arguments could be shortened by extracting results  from the literature. However, we have elected not to use other sources in order to keep the discussion self-contained. 

We begin by stating (but not proving) a well-known elementary lemma that characterises tame $\text{SL}_2$-tilings.

\begin{lemma}\label{lem1}
An $\textnormal{SL}_2$-tiling $\mathbf{M}$ is tame if and only if there are bi-infinite sequences of integers $(u_i)$ and $(v_j)$ such that
\[
m_{i+1,j}+m_{i-1,j}=u_im_{i,j}\quad\text{and}\quad m_{i,j+1}+m_{i,j-1}=v_jm_{i,j},
\]
for $i,j\in\mathbb{Z}$.
\end{lemma}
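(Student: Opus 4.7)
The plan is to handle the two implications separately, with the reverse direction being a simple matter of expanding a determinant, and the forward direction requiring a slightly more delicate linear-algebra argument using that every $2\times 2$ submatrix of $\mathbf{M}$ with indices of the form $i,i+1$ and $j,j+1$ is in $\textnormal{SL}_2(\mathbb{Z})$, hence invertible over $\mathbb{Q}$ with integer inverse.

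For the easy direction, suppose the two recurrences hold. Then for each $i$ and $j$, the third row of the $3\times 3$ submatrix at rows $i-1,i,i+1$ and columns $j,j+1,j+2$ equals $-1$ times the first row plus $u_i$ times the second, so the determinant vanishes; hence $\mathbf{M}$ is tame.

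For the forward direction, fix $i$. Tameness says that for every $j$, the $3\times 3$ submatrix on rows $i-1,i,i+1$ and columns $j,j+1,j+2$ is singular. The $2\times 2$ submatrix on rows $i-1,i$ and columns $j,j+1$ has determinant $\pm 1$, so the first two rows of the $3\times 3$ submatrix are linearly independent as vectors in $\mathbb{Q}^3$, forcing the bottom row to be a unique linear combination
\[
m_{i+1,k}=\alpha_{i,j}\,m_{i-1,k}+\beta_{i,j}\,m_{i,k}\qquad(k=j,j+1,j+2).
\]
Cramer's rule applied to the $2\times 2$ submatrix of determinant $\pm 1$ shows $\alpha_{i,j},\beta_{i,j}\in\mathbb{Z}$. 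Comparing the relation written at $j$ with the relation written at $j+1$ on the overlapping columns $j+1,j+2$, and invoking the same nonsingularity argument, gives $\alpha_{i,j}=\alpha_{i,j+1}$ and $\beta_{i,j}=\beta_{i,j+1}$; thus these coefficients depend only on $i$, say $\alpha_i$ and $\beta_i$. Finally, substituting the expressions for $m_{i+1,j}$ and $m_{i+1,j+1}$ into the $\textnormal{SL}_2$-condition $m_{i,j}m_{i+1,j+1}-m_{i,j+1}m_{i+1,j}=1$ collapses the $\beta_i$-terms and, using $m_{i-1,j}m_{i,j+1}-m_{i-1,j+1}m_{i,j}=1$, yields $\alpha_i=-1$. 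Setting $u_i:=\beta_i$ gives the first recurrence. The column recurrence is obtained by applying the same argument to the transpose.

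The only mild obstacle is checking the coefficients are integers and constant in $j$; both are taken care of once one pairs Cramer's rule with the $\pm 1$ determinant, so no case analysis is needed even when some entries of $\mathbf{M}$ happen to vanish. The overall proof is self-contained and uses nothing beyond the $\textnormal{SL}_2$- and tameness conditions themselves.
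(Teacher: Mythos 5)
Your proof is correct. Note that the paper does not actually prove this lemma: it is stated explicitly as a ``well-known elementary lemma'' whose proof is omitted, so there is no in-paper argument to compare against. Your write-up supplies exactly the standard argument one would expect: the reverse direction by exhibiting the bottom row of each $3\times 3$ block as $u_i$ times the middle row minus the top row (only the row recurrence is needed here), and the forward direction by noting that the unimodular $2\times 2$ minors force the first two rows of each singular $3\times 3$ block to be independent, so the third row is a unique $\mathbb{Q}$-combination of them, with Cramer's rule on a determinant-$1$ minor giving integrality, the overlap of consecutive windows giving independence of $j$, and the $\mathrm{SL}_2$ condition pinning the coefficient of $m_{i-1,j}$ to $-1$; the column relation follows by transposing, since both the $\mathrm{SL}_2$ and tameness conditions are transpose-invariant. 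All steps check out, including the cancellation $m_{i,j}m_{i+1,j+1}-m_{i,j+1}m_{i+1,j}=\alpha_i\bigl(m_{i,j}m_{i-1,j+1}-m_{i,j+1}m_{i-1,j}\bigr)=-\alpha_i$, and your observation that the $\pm1$ minors make vanishing entries harmless is apt.
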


Informally speaking, this lemma states that, for the matrix $\mathbf{M}$,  `row $i+1$ plus row $i-1$ equals $u_i$ times row $i$' and `column $j+1$ plus column $j-1$ equals $v_j$ times column $j$'.

The next lemma is also elementary.

\begin{lemma}\label{lem2}
Let $a_1/b_1$, $a_2/b_2$ and $a_3/b_3$ be reduced rationals such that $a_1b_2-a_2b_1=1$ and $a_2b_3-a_3b_2=1$. Then there is an integer $k$ such that $a_3+a_1=ka_2$ and $b_3+b_1=kb_2$.
\end{lemma}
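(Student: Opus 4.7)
The plan is to exploit the two unimodular conditions by subtracting one from the other, which should produce a single divisibility relation linking $(a_1+a_3,b_1+b_3)$ to $(a_2,b_2)$.

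First I would rewrite the hypotheses as $a_1b_2 - a_2b_1 = 1$ and $a_2b_3 - a_3b_2 = 1$, and equate the right-hand sides. After rearranging the terms one obtains the key identity
\[
(a_1 + a_3)\,b_2 \;=\; a_2\,(b_1 + b_3).
\]
This single equation will carry the whole argument.

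Next I would appeal to the fact that $a_2/b_2$ is a \emph{reduced} rational, so $\gcd(a_2,b_2) = 1$. Assuming first that $a_2 \neq 0$, the coprimality together with the identity above forces $a_2 \mid (a_1 + a_3)$. Thus we may define the integer $k := (a_1+a_3)/a_2$, and then substituting $a_1 + a_3 = ka_2$ back into the identity and cancelling $a_2$ yields $b_1+b_3 = kb_2$, which is exactly what is required.

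Finally, I would dispose of the degenerate case $a_2 = 0$ separately: in that situation the condition that $a_2/b_2$ be reduced forces $b_2 = \pm 1$, and the first unimodular relation $a_1b_2 = 1$ together with $-a_3b_2 = 1$ gives $a_3 = -a_1$. Hence $a_1 + a_3 = 0 = k\cdot a_2$ for any integer $k$; since $b_2 = \pm 1$ also divides every integer, the choice $k = (b_1 + b_3)/b_2$ satisfies both required equalities. I do not expect any real obstacle here: the whole argument is essentially the observation that the two unimodular relations collapse into one divisibility statement, and the only mild subtlety is handling the case $a_2 = 0$ (equivalently the vertex $a_2/b_2 = \infty$) so that the division defining $k$ makes sense.
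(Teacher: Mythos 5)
Your proposal is correct and follows essentially the same route as the paper: equating the two unimodular relations to obtain $(a_1+a_3)b_2=a_2(b_1+b_3)$, then using coprimality of $a_2$ and $b_2$ to extract $k$. Your treatment of the degenerate case (only $a_2=0$ needs separate handling, since the main argument survives $b_2=0$) is if anything slightly more explicit than the paper's, which simply dismisses the cases $a_2=0$ or $b_2=0$ as easy.
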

\begin{proof}
The result can easily be established if $a_2=0$ or $b_2=0$, so let us suppose that neither $a_2$ nor $b_2$ is 0. Combining the equations $a_1b_2-a_2b_1=1$ and $a_2b_3-a_3b_2=1$ we obtain $a_2(b_3+b_1)=b_2(a_3+a_1)$. Since $a_2$ and $b_2$ are coprime we see that $a_3+a_1=ka_2$, for some integer $k$. Consequently $b_3+b_1=kb_2$, as required.
\end{proof}

We will find it convenient to define 
\[
J=\begin{pmatrix}0 & 1\\ -1 & 0 \end{pmatrix},\quad\text{so}\quad J^{-1}=\begin{pmatrix}0 & -1\\ 1 & 0 \end{pmatrix}.
\]
We write $A^T$ for the transpose matrix of $A\in\text{SL}_2(\mathbb{Z})$. Then clearly $J^T=J^{-1}=-J$. Also, we often have recourse to the useful formula $JA^{-1}J^{-1}=J^{-1}A^{-1}J=A^{T}$, for $A\in\text{SL}_2(\mathbb{Z})$.

Let us now prove that the function $\widetilde{\Phi}\colon\mathscr{P}\times\mathscr{P}\longrightarrow \SL$ is well-defined. Consider a pair $(\gamma, \delta)$ of bi-infinite paths in the Farey graph $\mathscr{F}$. We continue the notation of the introduction, writing the vertices of $\gamma$ as reduced rationals $a_i/b_i$, for $i\in\mathbb{Z}$, and writing the vertices of $\delta$ as reduced rationals $c_j/d_j$, for $j\in\mathbb{Z}$. We choose these representations such that $a_ib_{i+1}-a_{i+1}b_i=1$ and $c_jd_{j+1}-c_{j+1}d_j=1$, for $i,j\in\mathbb{Z}$. Now we define $\mathbf{M}\colon \mathbb{Z}\times \mathbb{Z}\longrightarrow \mathbb{Z}$ by 
\[
m_{i,j}=\begin{pmatrix}a_i & b_i\end{pmatrix}J\begin{pmatrix}c_j \\ d_j\end{pmatrix}=a_id_j-b_ic_j,
\]
where $m_{i,j}=\mathbf{M}(i,j)$. Then
\begin{equation}\label{eqn2}
\begin{pmatrix}m_{i,j} & m_{i,j+1}\\ m_{i+1,j} & m_{i+1,j+1} \end{pmatrix}=
\begin{pmatrix}a_i & b_i \\a_{i+1} & b_{i+1}\end{pmatrix}J
\begin{pmatrix}c_{j} & c_{j+1}\\ d_j & d_{j+1}\end{pmatrix}
\in \text{SL}_2(\mathbb{Z}).
\end{equation}
Hence $\mathbf{M}$ is an $\text{SL}_2$-tiling. Next, observe that, by Lemma~\ref{lem2}, for each $i\in\mathbb{Z}$ there is an integer $u_i$ such that $a_{i+1}+a_{i-1}=u_ia_i$ and $b_{i+1}+b_{i-1}=u_ib_i$. Hence
\[
m_{i+1,j}+m_{i-1,j}=(a_{i+1}d_j-b_{i+1}c_j)+(a_{i-1}d_j-b_{i-1}c_j)=u_ia_id_j-u_ib_ic_j=u_im_{i,j},
\]
for $j\in\mathbb{Z}$. Similarly, there are integers $v_j$ with $m_{i,j+1}+m_{i,j-1}=v_jm_{i,j}$, for $i,j\in\mathbb{Z}$. Hence $\mathbf{M}$ is tame, by Lemma~\ref{lem1}.

Notice that if we write the vertices of $\gamma$ in the alternative form $(-a_i)/(-b_i)$, for $i\in\mathbb{Z}$, then we obtain the matrix $-\mathbf{M}$ in place of $\mathbf{M}$ (and a similar comment applies to $\delta$), so $\widetilde{\Phi}$ is indeed well-defined.

The next task is to check that two pairs of bi-infinite paths that lie in the same orbit of $\text{SL}_2(\mathbb{Z})$ are mapped  to the same element of $\SL$. To see this, suppose that $A\in\text{SL}_2(\mathbb{Z})$, and 
\[
\begin{pmatrix}a_i' \\ b_i'\end{pmatrix}=A\begin{pmatrix}a_i \\ b_i\end{pmatrix},
\quad
\begin{pmatrix}c_j' \\ d_j'\end{pmatrix}=A\begin{pmatrix}c_j \\ d_j\end{pmatrix}
\quad\text{and}\quad
m'_{i,j}=a_i'd_j'-b_i'c_j',
\]
for $i,j\in\mathbb{Z}$. Then we can use the formula $JA^{-1}J^{-1}=A^T$ to give
\[
m_{i,j}' = \begin{pmatrix}a_i' & b_i'\end{pmatrix}J\begin{pmatrix}c_j' \\ d_j'\end{pmatrix}
=\begin{pmatrix}a_i & b_i\end{pmatrix}A^TJA\begin{pmatrix}c_j \\ d_j\end{pmatrix}=m_{i,j},
\]
as required. It follows, then, that we have a well-defined function $\Phi\colon (\PATHP\times\PATHP)/\text{SL}_2(\mathbb{Z})\longrightarrow \SL$, which maps the orbit of  $(\gamma, \delta)$ under $\text{SL}_2(\mathbb{Z})$  to  $\pm\mathbf{M}$.

For the second part of the proof of Theorem~\ref{thm1}, we will define a function 
\[
\Psi\colon \SL\longrightarrow (\PATHP\times\PATHP)/\text{SL}_2(\mathbb{Z})
\]
that will prove to be the inverse function of $\Phi$. The next lemma is fundamental to defining $\Psi$.

\begin{lemma}\label{lem3}
Let $\mathbf{M}$ be a tame $\textnormal{SL}_2$-tiling with 
\[
M= \begin{pmatrix}m_{0,0} & m_{0,1}\\ m_{1,0} & m_{1,1} \end{pmatrix}. 
\]
Define $a_i=m_{i,0}$, $b_i=m_{i,1}$,  $c_j=m_{0,j}$ and $d_j=m_{1,j}$, for $i,j\in\mathbb{Z}$. Then 
\[
m_{i,j} = \begin{pmatrix}a_i & b_i\end{pmatrix}M^{-1}\begin{pmatrix}c_j \\ d_j\end{pmatrix},
\]
for $i,j\in\mathbb{Z}$.
\end{lemma}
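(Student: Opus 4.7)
The plan is to prove the identity by exploiting the recurrence relations guaranteed by tameness (Lemma~\ref{lem1}) together with a boundary check on the two ``axis'' rows $i=0$ and $i=1$.

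First, I invoke Lemma~\ref{lem1} to produce bi-infinite integer sequences $(u_i)$ and $(v_j)$ such that $m_{i+1,j}+m_{i-1,j}=u_im_{i,j}$ and $m_{i,j+1}+m_{i,j-1}=v_jm_{i,j}$ for all $i,j\in\mathbb{Z}$. Specialising to $j=0$ and $j=1$ shows that both $(a_i)=(m_{i,0})$ and $(b_i)=(m_{i,1})$ satisfy the same linear recurrence $x_{i+1}=u_ix_i-x_{i-1}$.

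Next, define the auxiliary function $n_{i,j}=\begin{pmatrix}a_i & b_i\end{pmatrix}M^{-1}\begin{pmatrix}c_j\\ d_j\end{pmatrix}$. Because $M^{-1}$ and the column vector $\begin{pmatrix}c_j\\ d_j\end{pmatrix}$ are constant with respect to $i$, linearity of the product in the row vector $\begin{pmatrix}a_i & b_i\end{pmatrix}$ immediately yields $n_{i+1,j}+n_{i-1,j}=u_i n_{i,j}$, so $n_{i,j}$ satisfies the same recurrence in $i$ as $m_{i,j}$.

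The boundary check is the key observation: since $\begin{pmatrix}m_{0,0} & m_{0,1}\end{pmatrix}$ and $\begin{pmatrix}m_{1,0} & m_{1,1}\end{pmatrix}$ are the rows of $M$, we have $\begin{pmatrix}m_{0,0} & m_{0,1}\end{pmatrix}M^{-1}=(1,0)$ and $\begin{pmatrix}m_{1,0} & m_{1,1}\end{pmatrix}M^{-1}=(0,1)$. Applying these to the column $\begin{pmatrix}c_j\\ d_j\end{pmatrix}=\begin{pmatrix}m_{0,j}\\ m_{1,j}\end{pmatrix}$ gives $n_{0,j}=m_{0,j}$ and $n_{1,j}=m_{1,j}$ for every $j\in\mathbb{Z}$. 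Since $m_{i,j}$ and $n_{i,j}$ agree on two consecutive rows and satisfy the same three-term recurrence in $i$, a straightforward two-sided induction on $|i|$ (fixing $j$ and varying $i$) gives $m_{i,j}=n_{i,j}$ everywhere.

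The argument is essentially mechanical; there is no real obstacle beyond setting up the bookkeeping. The only subtlety worth flagging is the very first use of tameness: one must check that the definition of $(u_i)$ is genuinely independent of $j$, which is precisely what Lemma~\ref{lem1} delivers, and this is what allows the ``row recurrence'' satisfied by the columns of $\mathbf{M}$ to be transported to the auxiliary function $n_{i,j}$.
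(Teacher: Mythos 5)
Your proposal is correct and follows essentially the same route as the paper: both rely solely on the row recurrence $m_{i+1,j}+m_{i-1,j}=u_im_{i,j}$ from Lemma~\ref{lem1} (with $u_i$ independent of $j$) and propagate the identity from the base rows $i=0,1$, where it holds because the rows of $M$ times $M^{-1}$ give the standard basis vectors. The paper packages the induction as a product of transfer matrices $U_i=\begin{pmatrix}0&1\\-1&u_i\end{pmatrix}$ applied to the columns $j=0,1$, whereas you phrase it as uniqueness of a sequence satisfying the same three-term recurrence with the same initial data; these are the same argument in different clothing.
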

\begin{proof}
Let $(u_i)$ be the bi-infinite sequence from Lemma~\ref{lem1}. Then 
\[
\begin{pmatrix}m_{i,j} \\ m_{i+1,j}\end{pmatrix} =U_i\begin{pmatrix}m_{i-1,j} \\ m_{i,j}\end{pmatrix},\quad\text{where}\quad U_i = \begin{pmatrix}0 & 1 \\ -1 & u_i\end{pmatrix},
\]
for $i,j\in\mathbb{Z}$. We will prove the lemma when $i>0$ (the case $i\leq 0$ can be handled similarly). For $i>0$,
\[
\begin{pmatrix}m_{i,j} \\ m_{i+1,j}\end{pmatrix} =U_i\dotsb U_1 \begin{pmatrix}c_j \\ d_j\end{pmatrix}.
\]
Applying this with $j=0$ and $j=1$ we obtain
\[
\begin{pmatrix}a_i &  b_i\\a_{i+1}& b_{i+1}\end{pmatrix}=U_i\dotsb U_1M.
\]
Hence
\[
\begin{pmatrix}m_{i,j} \\ m_{i+1,j}\end{pmatrix}=\begin{pmatrix}a_i &  b_i\\a_{i+1}& b_{i+1}\end{pmatrix}M^{-1}\begin{pmatrix}c_j \\ d_j\end{pmatrix}.
\]
Taking the first component of each side of this vector equation gives the result.
\end{proof}

Consider now a tame $\text{SL}_2$-tiling $\mathbf{M}$. Let
\[
M= \begin{pmatrix}m_{0,0} & m_{0,1}\\ m_{1,0} & m_{1,1} \end{pmatrix}. 
\]
We define two bi-infinite sequences of reduced rationals $a_i/b_i$ and $c_j/d_j$, for $i,j\in\mathbb{Z}$, by 
\begin{equation}\label{eqn3}
\begin{pmatrix}a_i\\ b_i \end{pmatrix}= \begin{pmatrix}m_{i,0}\\ m_{i,1} \end{pmatrix}
\quad\text{and}\quad
\begin{pmatrix}c_j\\ d_j \end{pmatrix}= M^TJ^{-1}\begin{pmatrix}m_{0,j}\\ m_{1,j} \end{pmatrix}.
\end{equation}
Then, as vertices of the Farey graph, $a_i/b_i \sim a_{i+1}/b_{i+1}$ for $i\in\mathbb{Z}$, since $a_ib_{i+1}-a_{i+1}b_i=m_{i,0}m_{i+1,1}-m_{i+1,0}m_{i,1}=1$. Hence $a_i/b_i$, for $i\in\mathbb{Z}$, determines a bi-infinite path in the Farey graph, as does $c_j/d_j$, for $j\in\mathbb{Z}$ (for similar reasons). Clearly, $-\mathbf{M}$ gives rise to the same pair of bi-infinite paths, so we obtain a function from $\SL$ to $\PATHP\times\PATHP$. We define $\Psi\colon \SL \longrightarrow (\PATHP\times\PATHP)/\text{SL}_2(\mathbb{Z})$ to be the composition of the function just constructed with the projection $\PATHP\times\PATHP\longrightarrow(\PATHP\times\PATHP)/\text{SL}_2(\mathbb{Z})$.

It remains to prove that $\Phi$ and $\Psi$ are inverse functions. Consider, then, a tame $\text{SL}_2$-tiling $\mathbf{M}$. Under $\Psi$, the pair $\pm\mathbf{M}$ maps to the $\text{SL}_2(\mathbb{Z})$-orbit of the pair of bi-infinite paths with vertices $a_i/b_i$ and $c_j/d_j$ given by equations~\eqref{eqn3}. And under $\Phi$ this orbit maps to $\pm\mathbf{M}'$, where
\[
m'_{i,j} = \begin{pmatrix}a_i& b_i \end{pmatrix}J\begin{pmatrix}c_j\\ d_j\end{pmatrix}=\begin{pmatrix}m_{i,0}& m_{i,1} \end{pmatrix}JM^TJ^{-1}\begin{pmatrix}m_{0,j}\\ m_{1,j} \end{pmatrix}.
\]
Now, $JM^TJ^{-1}=M^{-1}$, so we see that
\[
m'_{i,j}=\begin{pmatrix}m_{i,0}& m_{i,1} \end{pmatrix}M^{-1}\begin{pmatrix}m_{0,j}\\ m_{1,j} \end{pmatrix}=m_{i,j},
\]
by Lemma~\ref{lem3}. Hence $\Phi \circ \Psi$ is the identity function. 

Next, consider a pair of bi-infinite paths $\gamma$ and $\delta$ in $\mathscr{F}$ with vertices $a_i/b_i$ and $c_j/d_j$, respectively, where $a_{i}b_{i+1}-a_{i+1}b_{i}=1$ and $c_{j}d_{j+1}-c_{j+1}d_{j}=1$, for $i,j\in\mathbb{Z}$. Let $\mathbf{M}$ be the tame $\text{SL}_2$-tiling given by $m_{i,j}=a_id_j-b_ic_j$, for $i,j\in\mathbb{Z}$. Under $\Psi\circ \Phi$, the $\text{SL}_2(\mathbb{Z})$-orbit of $(\gamma,\delta)$ maps to the $\text{SL}_2(\mathbb{Z})$-orbit of $(\gamma',\delta')$, where   $\gamma'$ and $\delta'$ have vertices $a_i'/b_i'$ and $c_j'/d_j'$, respectively, that satisfy
\[
\begin{pmatrix}a_i'\\ b_i' \end{pmatrix}= \begin{pmatrix}m_{i,0}\\ m_{i,1} \end{pmatrix}=\begin{pmatrix}c_0 & d_0\\ c_1 & d_1 \end{pmatrix}J^{-1}\begin{pmatrix}a_i\\ b_i \end{pmatrix}
\]
and
\[
\begin{pmatrix}c_j'\\ d_j' \end{pmatrix}= M^TJ^{-1}\begin{pmatrix}m_{0,j}\\ m_{1,j} \end{pmatrix}=M^TJ^{-1}\begin{pmatrix}a_0 & b_0\\ a_1 & b_1 \end{pmatrix}J\begin{pmatrix}c_j\\ d_j \end{pmatrix},
\]
for $i,j\in\mathbb{Z}$. Now, applying equation~\eqref{eqn2} with $i=j=0$ gives 
\[
M=
\begin{pmatrix}a_0 & b_0 \\a_{1} & b_{1}\end{pmatrix}J
\begin{pmatrix}c_{0} & c_{1}\\ d_0 & d_{1}\end{pmatrix}.
\]
Using this equation, and the formula $JA^{-1}J^{-1}=J^{-1}A^{-1}J=A^T$, for $A\in\text{SL}_2(\mathbb{Z})$, one can check that 
\[
\begin{pmatrix}c_0 & d_0\\ c_1 & d_1 \end{pmatrix}J^{-1}=M^TJ^{-1}\begin{pmatrix}a_0 & b_0\\ a_1 & b_1 \end{pmatrix}J.
\]
Let $A$ be the element of $\text{SL}_2(\mathbb{Z})$ given by either side of this equation. Then 
\[
\begin{pmatrix}a_i'\\ b_i' \end{pmatrix}=A\begin{pmatrix}a_i\\ b_i \end{pmatrix}
\quad\text{and}\quad
\begin{pmatrix}c_j'\\ d_j' \end{pmatrix}= A\begin{pmatrix}c_j\\ d_j \end{pmatrix},
\]
for $i,j\in\mathbb{Z}$. It follows that $(\gamma,\delta)$ and $(\gamma',\delta')$ belong to the same orbit of $\text{SL}_2(\mathbb{Z})$, so $\Psi\circ \Phi$ is the identity function. 

In summary, $\Phi$ and $\Psi$ are inverse functions, so $\Phi$ is a one-to-one correspondence, which proves Theorem~\ref{thm1}.

\section{Positive tilings}\label{sec3}

In this section we prove Theorem~\ref{thm2}, which says that $\Phi\colon (\PATHP\times\PATHP)^+/\text{SL}_2(\mathbb{Z})\longrightarrow \SL^+$ is a one-to-one correspondence. To establish this, we first  show that $\Phi$ maps $(\PATHP\times\PATHP)^+/\text{SL}_2(\mathbb{Z})$ into $\SL^+$, and then we show that $\Psi$ maps $\SL^+$ into $(\PATHP\times\PATHP)^+/\text{SL}_2(\mathbb{Z})$.

First, however, it is helpful to note that a clockwise bi-infinite path $\gamma=\langle\,\dotsc v_{-1},v_0,v_1,v_2,\dotsc\rangle$ in $\mathscr{F}$ has one of three types: (i) a decreasing bi-infinite sequence of rational numbers; (ii) $v_i<v_{i-1}<\dotsb < \gamma_{-\infty}\leq \gamma_\infty< \dotsb <v_{i+2}<v_{i+1}$, for some integer $i$; or (iii) $v_i=\infty$ for some integer $i$ and $v_{i-1}<v_{i-2}<\dotsb < \gamma_{-\infty}\leq \gamma_\infty< \dotsb <v_{i+2}<v_{i+1}$. Of course, in all three cases $v_j$ and $v_{j+1}$ are adjacent in $\mathscr{F}$, for $j\in\mathbb{Z}$.

Returning to the proof of Theorem~\ref{thm2}, consider a pair of clockwise bi-infinite paths $(\gamma,\delta)$ in the Farey graph, where, in the notation of the introduction, the limits $\gamma_{-\infty},\gamma_{\infty},\delta_{-\infty},\delta_{\infty}$ are in clockwise order in $\mathbb{R}_\infty$ (after identifying $\mathbb{R}_\infty$ with the unit circle by the modified Cayley transform $z\longmapsto (iz+1)/(z+i)$) and $\gamma_\infty\neq\gamma_{-\infty}$ and $\delta_\infty\neq\delta_{-\infty}$. By applying an element of $\text{SL}_2(\mathbb{Z})$ we can assume that the vertex of $\delta$ with index 1 is $\infty$. It follows that $\gamma$ is a decreasing sequence of reduced rationals $a_i/b_i$, where $i\in\mathbb{Z}$. Let us choose all the denominators $b_i$ to be positive, in which case the fact that the path is decreasing implies that $a_{i}b_{i+1}-a_{i+1}b_{i}>0$, for $i\in\mathbb{Z}$. Consequently, we see that $a_{i}b_{i+1}-a_{i+1}b_{i}=1$, for  $i\in\mathbb{Z}$.

Let $c_j/d_j$, for $j\in\mathbb{Z}$, be reduced rationals that form the path $\delta$, where $c_1=-1$ and $d_1=0$. We choose the denominators $d_j$ to be positive if $j<1$, and negative if $j>1$. Again, we can check that $c_{j}d_{j+1}-d_{j+1}c_{j}=1$, for $j\in\mathbb{Z}$. Now, because $a_i/b_i >c_j/d_j$, for $i\in\mathbb{Z}$ and $j<1$, we obtain
\[
m_{i,j} = a_id_j-b_ic_j = b_id_j\left(\frac{a_i}{b_i}-\frac{c_j}{d_j}\right)>0.
\] 
Similarly, $m_{i,j}>0$ if $j\geq 1$. In this way we see that $\Phi$ maps $(\PATHP\times\PATHP)^+/\text{SL}_2(\mathbb{Z})$ into $\SL^+$.

It remains to prove that $\Psi$ maps $\SL^+$ into $(\PATHP\times\PATHP)^+/\text{SL}_2(\mathbb{Z})$. Suppose then that $\mathbf{M}$ is a positive $\text{SL}_2$-tiling. We will need the inequalities 
\begin{equation}\label{eqn4}
\frac{m_{i,0}}{m_{i,1}}>\frac{m_{i+1,0}}{m_{i+1,1}}
\quad\text{and}\quad
\frac{m_{0,j}}{m_{1,j}}>\frac{m_{0,j+1}}{m_{1,j+1}},
\end{equation}
for $i,j\in\mathbb{Z}$, which follow immediately from the equations $m_{i,j}m_{i+1,j+1}-m_{i,j+1}m_{i+1,j}=1$ and the fact that the entries of $\mathbf{M}$ are positive.

Observe that the image of  $\pm\mathbf{M}$ under $\Psi$ is the $\text{SL}_2(\mathbb{Z})$-orbit of the pair of bi-infinite paths $\gamma$ and $\delta$ with vertices $a_i/b_i$ and $c_j/d_j$, respectively, given by equations~\eqref{eqn3}. By the left-hand inequality of \eqref{eqn4}, the first path $\gamma$ is a decreasing sequence of positive rational numbers, so it is a clockwise bi-infinite path in $\mathscr{F}$. For the second path $\delta$, we have
\[
c_j=m_{1,0}m_{0,j}-m_{0,0}m_{1,j}\quad\text{and}\quad d_j=m_{1,1}m_{0,j}-m_{0,1}m_{1,j},
\]
for $j\in\mathbb{Z}$. In particular, $c_0=0$ and $d_0=1$, and $c_1=-1$ and $d_1=0$.  Observe that 
\[
d_j=m_{1,j}m_{1,1}\left(\frac{m_{0,j}}{m_{1,j}}-\frac{m_{0,1}}{m_{1,1}}\right).
\]
Using the right-hand inequality of \eqref{eqn4}, we see that $d_j>0$ for $j<1$, and $d_j<0$ for $j>1$. From the equations $c_jd_{j+1}-c_{j+1}d_j=1$, for $j\in\mathbb{Z}$, we can infer that the sequence $c_0/d_0, c_{-1}/d_{-1},\dotsc$ is increasing and the sequence $c_2/d_2,c_3/d_3,\dotsc$ is decreasing.

Next, since $a_id_j-b_ic_j=m_{i,j}>0$, for $i,j\in\mathbb{Z}$, we see that $a_i/b_i>c_j/d_j$ for $j<1$, and $a_i/b_i<c_j/d_j$ for $j>1$. It follows that the limits $\gamma_{-\infty}$, $\gamma_{\infty}$, $\delta_{-\infty}$ and $\delta_{\infty}$ of $\gamma$ and $\delta$ are real numbers that satisfy $\delta_{-\infty}\leq \gamma_{\infty}<\gamma_{-\infty}\leq \delta_{\infty}$. Therefore $\delta$, like $\gamma$, is a clockwise bi-infinite path in $\mathscr{F}$, and the $\text{SL}_2(\mathbb{Z})$-orbit of $(\gamma,\delta)$ lies in $(\PATHP\times\PATHP)^+/\text{SL}_2(\mathbb{Z})$, as required. 

This completes the proof of Theorem~\ref{thm2}.

\section{Tame infinite friezes}\label{sec4}

In this section we prove Theorem~\ref{thm3}, which says that $\Phi\colon \PATHP/\text{SL}_2(\mathbb{Z})\longrightarrow \FR_\infty$ is a bijective function. This map sends the $\text{SL}_2(\mathbb{Z})$-orbit of a bi-infinite path $\gamma$ (with vertices $a_i/b_i$ such that $a_ib_{i+1}-a_{i+1}b_i=1$) to $\pm\mathbf{M}$, where $\mathbf{M}$ is the tame $\text{SL}_2$-tiling  given by 
\[
m_{i,j}=a_jb_i-b_ja_i,
\]
for $i,j\in\mathbb{Z}$. Since $m_{j,i}=-m_{i,j}$, for $i,j\in\mathbb{Z}$, we see that $\mathbf{M}$ is a tame infinite frieze.

Conversely, if $\mathbf{M}$ is a tame infinite frieze, then it is a tame $\text{SL}_2$-tiling, so we can find two bi-infinite paths $\gamma$ and $\delta$ in $\mathscr{F}$ for which, with the usual notation,
\[
m_{i,j}=a_id_j-b_ic_j,
\]
for $i,j\in\mathbb{Z}$. Since $m_{i,i}=0$, for $i\in\mathbb{Z}$, it follows that $a_i/b_i=c_i/d_i$, so $\gamma=\delta$. Therefore the preimage of $\pm\mathbf{M}$ under $\Phi$ is an element of $\PATHP/\text{SL}_2(\mathbb{Z})$. This completes the proof of Theorem~\ref{thm3}.

We conclude this section with the following theorem, referred to already in the introduction.

\begin{theorem}\label{thmZ}
Let $\gamma$ be a bi-infinite path in $\mathscr{F}$ and let $\mathbf{M}$ be a tame infinite frieze such that $\widetilde{\Phi}(\gamma)=\pm \mathbf{M}$. Then the itinerary for $\gamma$ is equal to the quiddity sequence of $\mathbf{M}$ and $-\mathbf{M}$.
\end{theorem}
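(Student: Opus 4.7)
The plan is to verify the identity term-by-term: for every $i\in\mathbb{Z}$, the $i$th itinerary entry $e_i$ coincides with the quiddity entry $m_{i+1,i-1}$. Fix reduced rational representatives $v_i=a_i/b_i$ of the vertices of $\gamma$, normalised so that $a_ib_{i+1}-a_{i+1}b_i=1$; then $\mathbf{M}$ is the frieze with $m_{i,j}=a_jb_i-b_ja_i$ and the quiddity sequence of $\mathbf{M}$ is exactly $(m_{i+1,i-1})_{i\in\mathbb{Z}}$.

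First I would apply Lemma~\ref{lem2} to the three consecutive vertices $v_{i-1},v_i,v_{i+1}$ of $\gamma$ to obtain an integer $k_i$ satisfying $a_{i+1}+a_{i-1}=k_ia_i$ and $b_{i+1}+b_{i-1}=k_ib_i$. Substituting $a_{i+1}=k_ia_i-a_{i-1}$ and $b_{i+1}=k_ib_i-b_{i-1}$ into
\[
m_{i+1,i-1}=a_{i-1}b_{i+1}-b_{i-1}a_{i+1},
\]
a two-line calculation collapses everything using $a_{i-1}b_i-b_{i-1}a_i=1$, yielding $m_{i+1,i-1}=k_i$.

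Next I would produce an explicit representative of the $\text{SL}_2(\mathbb{Z})$-element required to compute $e_i$. The matrix
\[
g_i=\begin{pmatrix}-b_{i-1} & a_{i-1}\\ -b_i & a_i\end{pmatrix}
\]
has determinant $-b_{i-1}a_i+a_{i-1}b_i=-(a_ib_{i-1}-a_{i-1}b_i)=1$, and one checks immediately that $g_i(v_{i-1})=0$ and $g_i(v_i)=\infty$. Then
\[
e_i=g_i(v_{i+1})=\frac{a_{i-1}b_{i+1}-b_{i-1}a_{i+1}}{a_ib_{i+1}-b_ia_{i+1}}=\frac{m_{i+1,i-1}}{1}=k_i,
\]
so $e_i=m_{i+1,i-1}$ as desired. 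Since $g_i$ is determined by $v_{i-1},v_i$ up to sign and the sign does not affect the Möbius action, this value of $e_i$ agrees with the one given by the definition.

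Finally, for the $-\mathbf{M}$ part of the statement, replacing $(a_i,b_i)$ by $(-a_i,-b_i)$ for all $i$ preserves both the vertex $a_i/b_i$ (hence the itinerary of $\gamma$) and the normalisation $a_ib_{i+1}-a_{i+1}b_i=1$, while replacing $\mathbf{M}$ by $-\mathbf{M}$; by definition the quiddity sequence of $-\mathbf{M}$ equals that of $\mathbf{M}$, so the claim passes to the equivalence class $\pm\mathbf{M}$. I do not expect any genuine obstacle here: the only point that requires care is bookkeeping for the sign in the matrix $g_i$ to ensure it lies in $\text{SL}_2(\mathbb{Z})$ rather than $\text{GL}_2(\mathbb{Z})$, after which the theorem reduces to the two short computations above.
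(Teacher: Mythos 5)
Your proposal is correct and takes essentially the same route as the paper: both arguments verify termwise that $e_i=m_{i+1,i-1}$ using the normalisation $a_{i-1}b_i-a_ib_{i-1}=1$, the paper by applying an element of $\textnormal{SL}_2(\mathbb{Z})$ to put $v_{i-1}=0$, $v_i=\infty$ and reading off the coordinates, and you by writing that normalising element down explicitly as the matrix $g_i$ and evaluating $g_i(v_{i+1})$ in homogeneous coordinates -- two presentations of the same computation, and your handling of the sign ambiguity and of $\pm\mathbf{M}$ is also what the paper does. (Your opening step via Lemma~\ref{lem2}, showing $m_{i+1,i-1}=k_i$, is harmless but redundant, since the direct evaluation of $g_i(v_{i+1})$ already gives $e_i=m_{i+1,i-1}$.)
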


\begin{proof}
Let $\gamma=\langle\,\dotsc,v_{-1},v_0,v_1,v_2,\dotsc\rangle$ with itinerary $[\,\dotsc,e_{-1},e_0,e_1,e_2,\dotsc]$. As usual, we write $v_i$ as a reduced rational $a_i/b_i$, for $i\in\mathbb{Z}$, where $a_ib_{i+1}-a_{i+1}b_i=1$.

Consider some particular integer $i$. By applying an element of $\text{SL}_2(\mathbb{Z})$ to $\gamma$ (which preserves its itinerary) we can assume that $v_{i-1}=0$ and $v_i=\infty$, in which case $v_{i+1}=e_i$. In these circumstances either
\begin{align*}
&a_{i-1}=0,\quad  &&a_{i}=-1,\quad &&a_{i+1}=-e_i,\\
&b_{i-1}=1 ,\quad &&b_{i}=0,\quad &&b_{i+1}=-1,
\end{align*}
or else the sign of each of these numbers is reversed. In both cases we have
\[
m_{i+1,i-1}=a_{i-1}b_{i+1}-a_{i+1}b_{i-1}=e_i.
\]
Thus the $i$th entry of the quiddity sequence $m_{j+1,j-1}$, for $j\in\mathbb{Z}$, is equal to the $i$th entry of the itinerary for $\gamma$. It follows that the quiddity sequence and itinerary coincide. 
 \end{proof}

\section{Positive infinite friezes}\label{sec5}

In this section we prove Theorem~\ref{thm4}, which says that the map $\Phi\colon\PATHP^+/\text{SL}_2(\mathbb{Z})\longrightarrow \FR_\infty^+$ is a one-to-one correspondence. 

Consider a clockwise bi-infinite path $\gamma$, with vertices given by reduced rationals $a_i/b_i$, for $i\in\mathbb{Z}$. By applying an element of $\text{SL}_2(\mathbb{Z})$, we can assume that the vertex of $\gamma$ with index 1 is $\infty$, and $a_1=1$ and $b_1=0$. Let us choose the denominators $b_i$ to be positive if $i>1$, and negative if $i<1$. Then we can deduce that $a_ib_{i+1}-a_{i+1}b_i=1$, for $i\in\mathbb{Z}$.

Suppose now that $i,j\in\mathbb{Z}$ with $i>j$. If $i$ and $j$ are both greater than 1 or both less than 1, then $a_i/b_i < a_j/b_j$, so the tame infinite frieze $\mathbf{M}$ corresponding to $\gamma$ satisfies
\[
m_{i,j}=a_jb_i-b_ja_i=b_ib_j\left(\frac{a_j}{b_j}-\frac{a_i}{b_i}\right) >0.
\]
Similarly it can be shown that $m_{i,j}>0$ if $i>1>j$, or if $i>j$ and one of $i$ or $j$ equals 1. Hence $\mathbf{M}$ is a positive infinite frieze.

Conversely, suppose that $\mathbf{M}$ is a positive infinite frieze, and let $\gamma$ be the bi-infinite path in $\mathscr{F}$ given by equation~\eqref{eqn3} with vertices $a_i/b_i$, where $a_i=m_{i,0}$ and $b_i=m_{i,1}$, for $i\in\mathbb{Z}$ (which is in the preimage of $\pm\mathbf{M}$ under $\widetilde{\Phi}$). Observe that $a_0=0$, $b_0=-1$, $a_1=1$ and $b_1=0$. Also, $a_i$ and $b_i$ are negative for $i<0$ and they are positive for $i>1$. For $i>1$ we have
\[
\frac{a_i}{b_i}-\frac{a_{i+1}}{b_{i+1}}=\frac{1}{b_ib_{i+1}}(a_ib_{i+1}-a_{i+1}b_i)=\frac{1}{b_ib_{i+1}}>0,
\]
so $a_2/b_2,a_3/b_3,\dotsc$ is a decreasing sequence. Similarly, it can be shown that $a_0/b_0,a_{-1}/b_{-1},\dotsc$ is an increasing sequence. Furthermore, if $i>1$ and $j<0$ then $b_ib_j$ is negative, so
\[
\frac{a_i}{b_i}-\frac{a_j}{b_j} = -\frac{1}{b_ib_{j}}(a_jb_i-b_ja_i)=-\frac{m_{i,j}}{b_ib_{j}}>0.
\]
Hence $a_i/b_i>a_j/b_j$. It follows that $\gamma$ is a clockwise bi-infinite path. 

This completes the proof of Theorem~\ref{thm4}.

\section{Quiddity sequences of positive infinite friezes}\label{sec6}

Here we prove Theorem~\ref{thm5}, which characterises the quiddity sequences of positive infinite friezes. In the course of the proof, and elsewhere in this work, we often use the following elementary lemma, the proof of which is omitted.

\begin{lemma}\label{lemA}
Let $x$ and $y$ be adjacent vertices of $\mathscr{F}$. Then any finite path whose initial and final vertices lie in different components of $\mathbb{R}_\infty-\{x,y\}$ must pass through one or both of $x$ and $y$.
\end{lemma}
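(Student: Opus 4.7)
My plan is to reduce to the special case $\{x, y\} = \{\infty, 0\}$ via the $\text{SL}_2(\mathbb{Z})$-action on $\mathscr{F}$, and then to settle this case by a short arithmetic computation. Since $x$ and $y$ are adjacent, I would write $x = a/b$ and $y = c/d$ as reduced rationals with $ad - bc = 1$; then $g = \begin{pmatrix} a & c \\ b & d \end{pmatrix}$ lies in $\text{SL}_2(\mathbb{Z})$ and satisfies $g(\infty) = x$ and $g(0) = y$. Because $g^{-1}$ is both an automorphism of $\mathscr{F}$ and a homeomorphism of $\mathbb{R}_\infty$, applying $g^{-1}$ to the given path reduces the problem to the case where $\{x, y\} = \{\infty, 0\}$, whose two complementary components in $\mathbb{R}_\infty$ are the sets of strictly positive and strictly negative real numbers.

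The core step is to establish that no edge of $\mathscr{F}$ directly joins a strictly positive rational to a strictly negative rational. Given reduced rationals $p/q > 0$ and $r/s < 0$, I would choose sign conventions so that $q, s > 0$, in which case $p \geq 1$ and $-r \geq 1$, so $|ps - qr| = ps + q(-r) \geq 2$, which excludes adjacency in $\mathscr{F}$.

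To finish, I would suppose for contradiction that a path $\langle v_0, v_1, \dots, v_n \rangle$ with $v_0$ and $v_n$ in distinct components of $\mathbb{R}_\infty - \{\infty, 0\}$ passes through neither $\infty$ nor $0$. Then every $v_i$ is a nonzero rational, and by the core step each consecutive pair $v_i, v_{i+1}$ lies in a single component (both positive or both negative). Iterating along the path, all vertices lie in one component, contradicting the assumption on $v_0$ and $v_n$. I do not anticipate a significant obstacle here; the argument is essentially the standard geometric fact that the hyperbolic-line representatives of edges of $\mathscr{F}$ do not cross in $\mathbb{H}$, recast combinatorially by an explicit numerical estimate.
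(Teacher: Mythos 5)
Your proof is correct, and it genuinely supplies something the paper leaves out: the paper states this lemma as elementary and explicitly omits its proof, so there is no argument of record to compare against. Your route is a sound way to fill the gap. The reduction is legitimate: since $x\sim y$ you can choose reduced representations $x=a/b$, $y=c/d$ with $ad-bc=1$ (flipping the sign of one representation if necessary), and then $g=\begin{pmatrix}a&c\\ b&d\end{pmatrix}\in\textnormal{SL}_2(\mathbb{Z})$ sends $\infty,0$ to $x,y$; because $g^{-1}$ is simultaneously a graph automorphism of $\mathscr{F}$ and a homeomorphism of $\mathbb{R}_\infty$, it carries the two components of $\mathbb{R}_\infty-\{x,y\}$ to the positive and negative reals. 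The key arithmetic step is also right: for reduced $p/q>0$ and $r/s<0$ with $q,s>0$ one has $p\geq 1$ and $-r\geq 1$, so $ps-qr\geq 2$, ruling out an edge between a positive and a negative vertex; the sign-propagation along a path avoiding $0$ and $\infty$ then gives the contradiction. This is exactly the combinatorial shadow of the geometric fact that edges of $\mathscr{F}$, drawn as hyperbolic lines, cannot cross the edge joining $x$ and $y$, and your determinant estimate makes that precise without invoking any planarity argument.
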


Next we define hyperbolic triangulated polygons, which are hyperbolic versions of Euclidean triangulated polygons. 

\begin{definition}
A \emph{hyperbolic triangulated polygon} comprises the sets of vertices and edges obtained from the union of finitely many triangles in the Farey graph, where any two of these triangles can be connected to one another by a finite sequence of successively adjacent triangles from the union.  
\end{definition}

There is another way to think about hyperbolic triangulated polygons, using simple closed paths in $\mathscr{F}$. Recall that a simple closed path in $\mathscr{F}$ is a path $\langle v_0,v_1,\dots,v_n\rangle$ in $\mathscr{F}$ such that $v_0=v_n$ and $v_0,v_1,\dots,v_{n-1}$ are distinct. The length of this path is $n$. It can easily be shown that the vertices of a hyperbolic triangulated polygon, listed in clockwise order, determine a clockwise simple closed path of length at least three (see \cite[Section~2.2]{MoOvTa2015} for an explanation). Conversely, any clockwise simple closed path of length at least three determines a hyperbolic triangulated polygon with vertices given by the vertices of the path, and the edges of the triangulated polygon are those edges of $\mathscr{F}$ that connect pairs of vertices from the path.

There is a correspondence between hyperbolic and Euclidean triangulated polygons, which is illustrated in Figure~\ref{fig18} and can be described (in brief) as follows. To obtain a Euclidean triangulated polygon from a hyperbolic triangulated polygon $\mathscr{T}$ we map the Poincar\'e disc model of the hyperbolic plane (which we are using) to the Klein disc model of the hyperbolic plane in the standard way (by projecting stereographically onto a hemisphere above the disc and then projecting orthogonally back onto the disc). This projection preserves the unit circle pointwise and it sends lines in the Poincar\'e model to straight Euclidean lines in the Klein model. Thus $\mathscr{T}$ is taken to a Euclidean triangulated polygon with the same associated triangle-counting cycle.

\begin{figure}[ht]
	\centering
	\begin{tikzpicture}
	\begin{scope}[scale=1.2,xshift=0cm]
	\newdimen\R
	\R=1.5cm
	\pgfmathsetmacro{\theta}{360/7}
	
	\draw[thick] (0:\R) \foreach[count=\i] \lab in {2,2,3,1,2,4,1} {
		-- (-\i*\theta:\R) node[shift=({-\i*\theta:6pt})] {$\lab$}
	}-- cycle (90:\R) ;
	\draw[thick] (-\theta:\R) -- (-6*\theta:\R);
	\draw[thick] (-6*\theta:\R) -- (-3*\theta:\R);
	\draw[thick] (-3*\theta:\R) -- (-5*\theta:\R);
	\draw[thick] (-2*\theta:\R) -- (-6*\theta:\R);
	\end{scope}
	
	\begin{scope}[scale=2,xshift=3cm]	
	\discfareygraph[thin,grey]{4};
	
	\hgline[thick](0,0)(0:90:\rad);
	\hgline[thick](0,0)(90:180:\rad);
	\hgline[thick](0,0)(180:270:\rad);
	\hgline[thick](0,0)(270:360:\rad);
	\hgline[thick](0,0)(270:323.13:\rad);
	\hgline[thick](0,0)(323.13:337.37:\rad);
	\hgline[thick](0,0)(337.37:360:\rad);
	\hgline[thick](0,0)(337.37:343.74:\rad);
	\hgline[thick](0,0)(343.74:360:\rad);
	\hgline[thick](0,0)(323.13:360:\rad);
	\draw[thick] (0,\rad) -- (0,-\rad);
	
	\node[] at (180:\radplusminus) {$1$};
	\node[] at (90:\radplusminus) {$2$};
	\node[] at (0:\radplusminus) {$4$};
	\node[yshift=2pt] at (343.74:\radplusminus) {\footnotesize$1$};
	\node[] at (337.37:\radplusminus) {\footnotesize$2$};
	\node[] at (323.13:\radplusminus) {$2$};
	\node[] at (270:\radplusminus) {$3$};

	\end{scope}
	\end{tikzpicture}
	\caption{Euclidean and hyperbolic triangulated polygons with the same triangle-counting cycles}
	\label{fig18}
\end{figure}

Reversing this procedure, given a Euclidean triangulated polygon, one can obtain a hyperbolic triangulated polygon with the same triangle-counting sequence by an inductive process, starting from a single triangle and then gluing further triangles one at time. This process is formalised in \cite{MoOv2019} (in particular, Section~2.4 of that paper), where an algorithm using Farey sums is used to achieve this purpose.

In this manner we see that for each Euclidean triangulated polygon there is a hyperbolic triangulated polygon of the same combinatorial type, and vice versa (this was observed in \cite{MoOvTa2015}). From now on we use hyperbolic triangulated polygons rather than Euclidean triangulated polygons.

Recall from the introduction that the itinerary of a finite path $\gamma=\langle v_0,v_1,\dots,v_n\rangle$ is the sequence $[e_1,e_2,\dots,e_{n-1}]$, where, for each index $i$, the integer $e_i$ is the image of $v_{i+1}$ under a map $g\in\text{SL}_2(\mathbb{Z})$ that satisfies $g(v_{i-1})=0$ and $g(v_i)=\infty$. Also, recall that a cycle sequence is  obtained by removing the final term from a quiddity sequence of some positive frieze. 

\begin{lemma}\label{lemB}
A finite path $\gamma$ in $\mathscr{F}$ is a clockwise simple closed path if and only if the itinerary of $\gamma$ is a cycle sequence.
\end{lemma}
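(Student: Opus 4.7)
The plan is to use the bridge between clockwise simple closed paths in $\mathscr{F}$ and hyperbolic triangulated polygons established in the text, together with Conway and Coxeter's theorem identifying quiddity cycles of positive friezes with triangle-counting cycles of triangulated polygons.

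For the forward direction, let $\gamma=\langle v_0,v_1,\dots,v_n\rangle$ be a clockwise simple closed path, and let $\mathscr{T}$ denote the hyperbolic triangulated polygon whose vertices (listed in clockwise order) are those of $\gamma$. Write $t_i$ for the number of triangles of $\mathscr{T}$ incident to $v_i$. The heart of the argument is the identification $e_i=t_i$ for $i=1,\dots,n-1$. I would fix such an $i$ and apply $g\in\text{SL}_2(\mathbb{Z})$ with $g(v_{i-1})=0$ and $g(v_i)=\infty$, so that $e_i=g(v_{i+1})$. Since $\text{SL}_2(\mathbb{Z})$ acts by orientation-preserving hyperbolic isometries, it preserves the clockwise cyclic order on $\mathbb{R}_\infty$; hence $0,\infty,e_i$ is in clockwise order, forcing $e_i$ to be a positive integer and placing every other vertex of $g(\gamma)$ in the open real interval $(0,e_i)$. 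The polygon $g(\mathscr{T})$ therefore sits on the positive-real side of the edge $\{0,\infty\}$, and every edge of $g(\mathscr{T})$ issuing from $\infty$ must terminate at a vertex adjacent to $\infty$, namely an integer in $[0,e_i]$. Because two integers are adjacent in $\mathscr{F}$ precisely when they differ by one, these endpoints run through $0,1,2,\dots,e_i$ consecutively, yielding exactly $e_i$ triangles at $\infty$; so $t_i=e_i$. The cycle $(t_0,t_1,\dots,t_{n-1})$ is the triangle-counting cycle of $\mathscr{T}$ and hence of some Euclidean triangulated $n$-gon, and so (by Conway and Coxeter) is the quiddity cycle of a positive frieze of order $n$. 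Removing the entry $t_0$ gives the cycle sequence $[t_1,\dots,t_{n-1}]=[e_1,\dots,e_{n-1}]$, the itinerary of $\gamma$.

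For the converse, suppose the itinerary of $\gamma=\langle v_0,v_1,\dots,v_n\rangle$ is a cycle sequence $[e_1,\dots,e_{n-1}]$, arising by removing the final term of the quiddity cycle $(e_1,\dots,e_{n-1},e_n)$ of a positive frieze. Conway and Coxeter provide a Euclidean triangulated $n$-gon with this triangle-counting cycle, and passage to the hyperbolic setting produces a hyperbolic triangulated polygon $\mathscr{T}$ with the same cycle. Listing its vertices in clockwise order, starting at the vertex whose triangle count is $e_n$, yields a clockwise simple closed path $\gamma'$ whose itinerary, by the forward direction, is $[e_1,\dots,e_{n-1}]$. Since the itinerary of a finite path determines it up to the action of $\text{SL}_2(\mathbb{Z})$ (an easy induction on path length, starting from the normalisation $v_0=0$, $v_1=\infty$), some $h\in\text{SL}_2(\mathbb{Z})$ carries $\gamma$ onto $\gamma'$ vertex by vertex. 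As clockwise cyclic order, closedness, and simplicity of a vertex sequence are all preserved by $\text{SL}_2(\mathbb{Z})$, $\gamma$ is itself a clockwise simple closed path.

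The principal obstacle is the identification $e_i=t_i$. The counting becomes routine once the normalisation $g(v_{i-1})=0$, $g(v_i)=\infty$ is in place, but one needs to invoke carefully the elementary fact that two integers are Farey-adjacent only when they differ by one, and that the clockwise orientation (together with simplicity of $\gamma$) pins every remaining vertex of $g(\gamma)$ into the interval $(0,e_i)$, so that no edge of $g(\mathscr{T})$ from $\infty$ escapes the range $\{0,1,\dots,e_i\}$.
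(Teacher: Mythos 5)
Your proof is correct and takes essentially the same approach as the paper: both directions rest on identifying the itinerary entry $e_i$ with the number of triangles of the associated hyperbolic triangulated polygon incident to $v_i$ after normalising $v_{i-1}\mapsto 0$, $v_i\mapsto\infty$, and the converse recovers $\gamma$ from a Conway--Coxeter polygon via the fact that an itinerary determines a finite path up to the $\textnormal{SL}_2(\mathbb{Z})$-action. The only (cosmetic) difference is in the forward direction's key count: where you read off the fan of Farey triangles $\{\infty,k,k+1\}$ at $\infty$ directly, the paper invokes its separation lemma (Lemma~\ref{lemA}) to show that $\gamma$ must visit every integer $0,1,\dots,e_i$ -- both arguments close the same small gap.
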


\begin{proof}
Let $\gamma=\langle v_0,v_1,\dots,v_n\rangle$, with itinerary $[e_1,e_2,\dots,e_{n-1}]$. 

Suppose first that $\gamma$ is a clockwise simple closed path. Let $\mathscr{T}$ be the hyperbolic triangulated polygon with vertices those of $\gamma$. Let $t_i$ be the number of triangles in $\mathscr{T}$ incident to $v_i$. Then $t_1,t_2,\dots,t_{n-1}$ is a cycle sequence.

Consider any integer $i$ between (and including) 1 and $n-1$. By applying an element of $\text{SL}_2(\mathbb{Z})$, we can assume that $v_{i-1}=0$, $v_i=\infty$ and $v_{i+1}=e_i$. The sequence $v_{i-1},v_i,v_{i+1}$ is in clockwise order, so $e_i$ is a positive integer, and the remaining vertices of $\gamma$ lie between $0$ and $e_i$. By applying Lemma~\ref{lemA} to the path $\langle v_{i+1},v_{i+2},\dots,v_n,v_1,\dots,v_{i-1}\rangle$ with $x=\infty$ and $y$ equal to each of the integers $1,2,\dots,e_i-1$ in turn, we see that $\gamma$ must pass through all the vertices $0,1,\dots,e_i$. It follows that there are precisely $e_i$ triangles in $\mathscr{T}$ that are incident to $v_i$. Hence $t_i=e_i$, and therefore the itinerary of $\gamma$ is a cycle sequence. 

Conversely, suppose that $e_1,e_2,\dots,e_{n-1}$ is a cycle sequence. Then there is some hyperbolic triangulated polygon $\mathscr{T}$ with vertices $w_1,w_2,\dots,w_{n}$ such that $\delta=\langle w_0,w_1,\dots,w_n\rangle$ (where $w_0=w_n$) is a clockwise simple closed path and such that there are $e_i$ triangles in $\mathscr{T}$ incident to $w_i$, for $i=1,2,\dots,n-1$. 

By applying an element of $\text{SL}_2(\mathbb{Z})$ to $\gamma$ we can assume that $v_0=w_0$ and $v_1=w_1$. Now choose $g\in\text{SL}_2(\mathbb{Z})$ such that $g(v_0)=0$ and $g(v_1)=\infty$. Then $g(v_2)=e_1$, by definition of $e_1$, but also $g(w_2)=e_1$, because there are $e_1$ triangles in $\mathscr{T}$ incident to $w_1$. Hence $v_2=w_2$. Repeating this argument we see that $\gamma$ and $\delta$ are equal, so $\gamma$ is a clockwise simple closed path.
\end{proof}

We can now prove Theorem~\ref{thm5}, which says that a bi-infinite sequence of positive integers is the quiddity sequence of a positive infinite frieze if and only if it is acyclic. 

Choose any bi-infinite sequence of positive integers $\dotsc,e_{-1},e_0,e_1,e_2,\dotsc$ and let 
\[
\gamma=\langle\,\dotsc,v_{-1},v_0,v_1,v_2,\dotsc\rangle
\]
 be a bi-infinite path in $\mathscr{F}$ with itinerary $[\,\dotsc,e_{-1},e_0,e_1,e_2,\dotsc]$. For each $j\in\mathbb{Z}$, let $I_j$ denote the open interval of those points $x$ for which $v_{j-1},x,v_{j}$ are in clockwise order in $\mathbb{R}_\infty$. Since $e_j>0$ for $j\in\mathbb{Z}$ we can see (by mapping $v_{j-1}$ to $0$, $v_{j}$ to $\infty$, and $v_{j+1}$ to $e_j$) that $I_j$ and $I_{j+1}$ are disjoint. Two possibilities arise: either all the intervals $I_j$ are disjoint, or they are not. Let us consider each case in turn.

If the intervals $I_j$ are all disjoint, then $\gamma$ is a clockwise bi-infinite path (as defined in the introduction). It follows that $\gamma$ does not contain a clockwise simple closed path as a subpath of contiguous vertices of $\gamma$, so the itinerary of $\gamma$ does not contain a cycle sequence as a subsequence of contiguous terms, by Lemma~\ref{lemB}. Therefore the quiddity sequence of the corresponding frieze is acyclic.

Suppose now that the intervals $I_j$ are not disjoint. Then there are indices $j$ and $k$ with $k>j+1$ for which $v_k$ is either equal to $v_j$, or else it lies in $I_j$. However, by Lemma~\ref{lemA}, the path $\gamma$ cannot enter $I_j$ without first passing through $v_{j-1}$ or $v_{j}$. Thus if $v_k\in I_j$, then there is some index $l$ with $j+1<l<k$ for which $v_l$ is equal to one of $v_{j-1}$ or $v_{j}$. Thus we obtain a clockwise simple closed path as a subpath of contiguous vertices of $\gamma$. We can now apply Lemma~\ref{lemB} to see that the itinerary of $\gamma$ contains a cycle sequence, so it is not acyclic. 

This completes the proof of Theorem~\ref{thm5}.

\section{Tame  friezes}\label{sec7}

Here we prove that $\Phi$ is a bijective map from $\mathscr{C}_n/\text{SL}_2(\mathbb{Z})$ to $\mathbf{FR}_n$ (Theorem~\ref{thm6}). 

Suppose that $\langle v_0,v_1,\dots,v_n\rangle$ is a closed path in $\mathscr{F}$ (so $v_n=v_0$). As stated in the introduction, we identify this closed path with the periodic bi-infinite path $\langle\,\dotsc,v_{-1},v_0,v_1,v_2,\dotsc\rangle$, where $v_i= v_j$ if $i\equiv j\pmod{n}$. Using the usual notation, we write $v_i=a_i/b_i$ with $a_{i}b_{i+1}-a_{i+1}b_{i}=1$, for $i\in\mathbb{Z}$. For each integer $i$, we have $v_{i+n}=v_i$, so $a_{i+n}/b_{i+n}=a_i/b_i$, and hence $m_{i+n,i}=a_ib_{i+n}-b_ia_{i+n}=0$. Since $m_{i,i}=0$ also, we see that $\mathbf{M}$ is a tame frieze of order $n$.

Conversely, given a tame frieze $\mathbf{M}$ of order $n$ we have already seen in the introduction that the preimage of $\pm \mathbf{M}$ under $\Phi$ is the $\text{SL}_2(\mathbb{Z})$-orbit of a periodic bi-infinite path of period $n$. This completes the proof of Theorem~\ref{thm6}.


\section{Positive friezes}\label{sec7a}

We prove here that $\Phi\colon \mathscr{C}_n^+/\text{SL}_2(\mathbb{Z})\longrightarrow\mathbf{FR}_n^+$ is a one-to-one correspondence (Theorem~\ref{thm7}). 

To this end, suppose that $\gamma=\langle v_0,v_1,\dots,v_n\rangle$ is a clockwise simple closed path in $\mathscr{F}$, and consider the periodic bi-infinite path $\langle\,\dotsc,v_{-1},v_0,v_1,v_2,\dotsc\rangle$ defined by the property that $v_i= v_j$ if $i\equiv j\pmod{n}$. As ever, we write $v_i$ as a reduced rational $a_i/b_i$, for $i\in\mathbb{Z}$, where $a_ib_{i+1}-a_{i+1}b_i=1$, and we let $\mathbf{M}$ be the tame frieze of order $n$ with entries $m_{i,j}= a_jb_i-b_ja_i$, for $i,j\in\mathbb{Z}$.

Choose two particular integers $i$ and $j$ with $0<i-j<n$. By applying an element of $\text{SL}_2(\mathbb{Z})$, we can assume that $a_j=1$ and $b_j=0$ (and $v_j=\infty$). Then  $b_{j+1}=a_jb_{j+1}-a_{j+1}b_j=1$. Since $\gamma$ is a clockwise simple closed path of length $n$, we see that $v_{j+1}>v_{j+2}>\dots>v_i$. A simple inductive argument shows that $b_{j+1},b_{j+2},\dots,b_i$ are all positive, so $m_{i,j}=b_i>0$.

In summary, $\mathbf{M}$ is a tame frieze of order $n$ that satisfies $m_{i,j}>0$, for $0<i-j<n$; hence $\mathbf{M}$ is a positive frieze of order $n$.

Conversely, suppose that $\mathbf{M}$ is a positive frieze of order $n$. By  Theorem~\ref{thm6} we can find a closed path  $\gamma=\langle v_0,v_1,\dots,v_n\rangle$, where $v_0=v_n$ and $v_i=a_i/b_i$, for $i=0,1,\dots,n$, with $a_{i}b_{i+1}-a_{i+1}b_{i}=1$, for $i=0,1,\dots,n-1$, such that $m_{i,j}=a_jb_i-b_ja_i$, for $0\leq i,j\leq n$. Furthermore, by applying an element of $\text{SL}_2(\mathbb{Z})$ we can assume that $v_0=v_n=\infty$ and $a_0=1$ and $b_0=0$. Observe that, for $0<i<n$, 
\[
b_i = a_0b_i-b_0a_i=m_{i,0}>0.
\]
Hence
\[
\frac{a_i}{b_i}-\frac{a_{i+1}}{b_{i+1}}=\frac{1}{b_ib_{i+1}}(a_ib_{i+1}-a_{i+1}b_i)=\frac{1}{b_ib_{i+1}}>0,
\]
for $0<i<n-1$. That is, $v_1>v_2>\dots>v_{n-1}$, so $\gamma$ is a clockwise simple closed path, as required. This completes the proof of Theorem~\ref{thm7}.
 
\section{Antiperiodic tilings}\label{sec7c} 

This section has a proof that $\Phi$ is a bijective map from $(\mathscr{C}_r\times\mathscr{C}_s)^+/\text{SL}_2(\mathbb{Z})$ to $\SL^+(r,s)$ (Theorem~\ref{thm8}). 

Before proving this theorem we make a simple observation. Let $S$ be the shift map acting on the collection of bi-infinite paths $\mathscr{P}$ in the Farey graph; $S$ sends the path with vertices $v_i$ to the path with vertices $v_i'=v_{i-1}$. Note that $S$ is a bijective map, so, for integers $p$ and $q$, we can define an action of $(S^p,S^q)$ on $(\mathscr{P}\times\mathscr{P})/\text{SL}_2(\mathbb{Z})$ in the obvious way. 

Next, we define $T^{p,q}$ to be the self-map of $\SL$ that sends $\pm\mathbf{M}$ to $\pm\mathbf{M}'$, where $m'_{i,j}=m_{i-p,j-q}$, for $i,j\in\mathbb{Z}$. It is easily verified that
\[
\Phi\circ (S^p,S^q)= T^{p,q}\circ \Phi.
\]
Since $(\mathscr{C}_r\times\mathscr{C}_s)^+/\text{SL}_2(\mathbb{Z})$ is invariant under $(S^p,S^q)$ and $\SL^+(r,s)$ is invariant under $T^{p,q}$, we can use the displayed equation above to simplify our proof of Theorem~\ref{thm8}.

Let us now prove that theorem. Suppose then that $(\gamma,\delta)\in (\mathscr{C}_r\times\mathscr{C}_s)^+$; that is, suppose that $\gamma\in\mathscr{C}_r^+$, $\delta\in\mathscr{C}_s^+$, and $\gamma$ and $\delta$ do not intersect. We write $\gamma=\langle\,\dotsc,v_{-1},v_0,v_1,v_2,\dotsc\rangle$, where $\langle v_0,v_1,\dots,v_r\rangle$ is a clockwise simple closed path and $v_i=v_j$ for $i\equiv j\pmod{r}$, and $\delta=\langle\,\dotsc,w_{-1},w_0,w_1,w_2,\dotsc\rangle$, where $\langle w_0,w_1,\dots,w_s\rangle$ is a clockwise simple closed path and $w_i=w_j$ for $i\equiv j\pmod{s}$. We use the usual notation $v_i=a_i/b_i$ and $w_j=c_j/d_j$. Since $\gamma$ and $\delta$ do not intersect, we can apply Lemma~\ref{lemA} to see that there are two consecutive vertices $v_{p-1}$ and $v_p$ of $\gamma$ and two consecutive vertices $w_{q-1}$ and $w_q$ of $\delta$ such that $\delta$ is contained in one of the components of $\mathbb{R}_\infty - \{v_{p-1},v_p\}$ and $\gamma$ is contained in one of the components of $\mathbb{R}_\infty - \{w_{q-1},w_q\}$. Then, by applying $(S^p,S^q)$ to $(\gamma,\delta)$, we can assume that $p=q=1$.

Next, by applying an element of $\text{SL}_2(\mathbb{Z})$, we can assume that $w_{0}=\infty$ and $w_1=0$. Then $w_1,w_2,\dots,w_{s-1}$ is a decreasing sequence of negative numbers, and of course $w_{s}=\infty$. Recall that there are two possible choices for the integer sequences $c_j$ and $d_j$ that differ only in sign. We choose the sequence for which $c_{s}=-1$ (and $d_{s}=0$). That $w_1,w_2,\dots,w_{s-1}$ is a decreasing sequence, together with the equations $c_jd_{j+1}-c_{j+1}d_j=1$, then implies that $d_1,d_2,\dots,d_{s-1}$ are all positive. Furthermore, we can see that $c_{j+s}=-c_j$ and $d_{j+s}=-d_j$, for $j\in\mathbb{Z}$ (the sign changes each cycle).

Similarly, it can be shown that $v_1,v_2\dots,v_{r}$ is a decreasing sequence of positive numbers, that $b_1,b_2,\dots,b_{r}$ are  positive numbers, and that $a_{i+r}=-a_i$ and $b_{i+r}=-b_i$, for $i\in\mathbb{Z}$.

Let $\mathbf{M}$ be the tame $\text{SL}_2$-tiling with entries $m_{i,j}=a_id_j-b_ic_j$, for $i,j\in\mathbb{Z}$. Then $m_{i+r,j}=a_{i+r}d_j-b_{i+r}c_j=-a_id_j+b_ic_j=-m_{i,j}$, and likewise
$m_{i,j+s}=-m_{i,j}$, for $i,j\in\mathbb{Z}$. Furthermore, for $i=1,2,\dots, r$ and $j=1,2,\dots,s-1$, we have $v_i>w_j$, so
\[
m_{i,j}=a_id_j-b_ic_j= \frac{1}{b_id_j}\left(\frac{a_i}{b_i}-\frac{c_j}{d_j}\right)>0.
\]
Also, $m_{i,s}=a_id_{s}-b_ic_{s}=b_i>0$. Hence $m_{i,j}>0$, for $i=1,2,\dots,r$ and $j=1,2,\dots,s$, so $\pm\mathbf{M}\in\SL^+(r,s)$.

Conversely, consider a tame $\text{SL}_2$-tiling $\mathbf{M}\in \SL^+(r,s)$. Then $\mathbf{M}$ satisfies the antiperiodicity rules $m_{i+r,j}=m_{i,j+s}=-m_{i,j}$, for $i,j\in\mathbb{Z}$. Also, by applying a suitable shift map $T^{p,q}$ to $\mathbf{M}$, we can assume that $m_{i,j}>0$, for $i=1,2,\dots,r$ and $j=1,2,\dots,s$. Let $\gamma$ and $\delta$ be the bi-infinite paths in $\mathscr{F}$ defined by equations~\eqref{eqn3}, with the usual notation for paths. Then $a_{i+r}=m_{i+r,0}=-m_{i,0}=-a_i$ and $b_{i+r}=m_{i+r,1}=-m_{i,1}=-b_i$, for $i\in\mathbb{Z}$, so $\gamma$ is periodic with period $r$. Similarly, $\delta$ is periodic with period $s$. Observe that $\gamma$ and $\delta$ do not intersect, because $\mathbf{M}$ has no entries $0$.

Now, $a_ib_{i+1}-a_{i+1}b_i=1$, for $i=1,2,\dots,r-1$, and $b_i$ and $b_{i+1}$ are positive, so, after dividing throughout by $b_ib_{i+1}$ we see that 
\[
\frac{a_1}{b_1}>\frac{a_2}{b_2}>\dots>\frac{a_r}{b_r}.
\]
We know that $v_0=v_r$ and hence $\langle v_0,v_1,\dots,v_r\rangle$ is a clockwise simple closed path. In a similar manner it can be shown that $\langle w_0,w_1,\dots,w_s\rangle$ is a clockwise simple closed path. Hence $(\gamma,\delta)\in (\mathscr{C}_r\times \mathscr{C}_s)^+$.

This completes the proof of Theorem~\ref{thm8}.

\section{Tame friezes with positive quiddity sequences}\label{sec7b} 
 
In this section we prove Theorem~\ref{thm9}, which says that  $\Phi$ is a bijective map from $\mathscr{C}_n^0/\text{SL}_2(\mathbb{Z})$ to $\FR_n^0$. We use the following lemma.

\begin{lemma}\label{lemK}
Let $\gamma=\langle\,\dotsc,v_{-1},v_0,v_1,v_2,\dotsc\rangle$ be a bi-infinite path, with $v_i=a_i/b_i$ and $a_ib_{i+1}-a_{i+1}b_i=1$, for $i\in\mathbb{Z}$, and let $\mathbf{M}$ be the corresponding infinite frieze with entries $m_{i,j}=a_jb_i-b_ja_i$, for $i,j\in\mathbb{Z}$. For each integer $i$, the sequence $v_{i-1},v_i,v_{i+1}$ is in clockwise order if and only if $m_{i+1,i-1}>0$.	
\end{lemma}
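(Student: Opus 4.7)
The plan is to reduce to a canonical case via $\textnormal{SL}_2(\mathbb{Z})$ and then check the two properties by direct computation. First I would choose $g \in \textnormal{SL}_2(\mathbb{Z})$ with $g(v_{i-1}) = 0$ and $g(v_i) = \infty$; such a $g$ exists since $\textnormal{SL}_2(\mathbb{Z})$ acts transitively on ordered edges of $\mathscr{F}$. Two facts justify this reduction. First, elements of $\textnormal{SL}_2(\mathbb{Z})$ act on $\mathbb{R}_\infty$ as Möbius transformations arising from orientation-preserving hyperbolic isometries of $\mathbb{H}$; via the modified Cayley transform, this means they preserve the clockwise cyclic order on $\mathbb{R}_\infty$. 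Second, by the $\textnormal{SL}_2(\mathbb{Z})$-invariance $\widetilde{\Phi}(A\gamma) = \widetilde{\Phi}(\gamma)$ established in Section~\ref{sec2}, the entry $m_{i+1,i-1}$ is unchanged by the normalization. Hence it suffices to prove the lemma in the normalized case.

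In that case, the adjacency relations $a_{i-1}b_i - a_ib_{i-1} = 1 = a_ib_{i+1} - a_{i+1}b_i$ pin down the integer representatives up to the global sign change $(a_k,b_k) \leftrightarrow (-a_k,-b_k)$; we may take $(a_{i-1},b_{i-1}) = (0,-1)$ and $(a_i,b_i) = (1,0)$, which forces $b_{i+1}=1$ and hence $v_{i+1} = a_{i+1} \in \mathbb{Z}$. A one-line computation then gives
\[
m_{i+1,i-1} = a_{i-1}b_{i+1} - b_{i-1}a_{i+1} = a_{i+1}.
\]
(The opposite sign choice produces the same result.)

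It remains to read off the clockwise condition. Under the modified Cayley transform $z\mapsto (iz+1)/(z+i)$, one has $0\mapsto -i$ and $\infty\mapsto i$, positive reals map into the right semicircle, and negative reals into the left. Reading clockwise around $\mathbb{S}$ starting from $-i$ one traverses first the right semicircle to $i$ and then the left semicircle back to $-i$, so the triple $(0,\infty,a_{i+1})$ is in clockwise order exactly when $a_{i+1}>0$. Combined with $m_{i+1,i-1} = a_{i+1}$, this proves the lemma.

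The computations are routine; the only point that requires care is the orientation bookkeeping, i.e.\ checking that the convention that makes $(0,\infty,x)$ clockwise for positive $x$ (and not for negative $x$) really does match the modified Cayley transform used to define clockwise order in the paper. Once that is set up correctly, both the $\textnormal{SL}_2(\mathbb{Z})$-normalization and the final sign-reading step become essentially automatic.
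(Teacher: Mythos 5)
Your proposal is essentially the paper's own argument: normalise by an element of $\textnormal{SL}_2(\mathbb{Z})$ (the paper sends only $v_i$ to $\infty$, you send $(v_{i-1},v_i)$ to $(0,\infty)$, which is a cosmetic difference), compute $m_{i+1,i-1}$ in coordinates, and read off the clockwise condition; your identity $m_{i+1,i-1}=a_{i+1}=v_{i+1}$ is the specialisation of the paper's $m_{i+1,i-1}=a_{i-1}+a_{i+1}$ with clockwise $\Leftrightarrow -a_{i-1}<a_{i+1}$.

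One point needs correcting, and it is exactly the orientation bookkeeping you flag as delicate: travelling \emph{clockwise} from $-i$ (the image of $0$) one traverses the \emph{left} semicircle first, reaching $i$ (the image of $\infty$), and only then the right semicircle --- this is consistent with the paper's convention that clockwise paths of rationals are \emph{decreasing} sequences. Your sentence asserts the opposite traversal (right semicircle first), and from that premise the triple $(0,\infty,x)$ would be clockwise precisely when $x<0$, contradicting the criterion $x>0$ that you then state. The criterion $x>0$ is the correct one, so the lemma follows once that sentence is repaired; as written, though, the justification of the key sign is internally inconsistent.
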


\begin{proof}
Choose any integer $i$. By applying an element of $\text{SL}_2(\mathbb{Z})$ to $\gamma$, which will not affect $\mathbf{M}$, we can specify that $v_i=\infty$. Let us also fix signs such that $a_i=1$ and $b_i=0$. Then $b_{i-1}=-(a_{i-1}b_{i}-a_{i}b_{i-1})=-1$ and $b_{i+1}=a_{i}b_{i+1}-a_{i+1}b_{i}=1$. Hence $v_{i-1}=-a_{i-1}$ and $v_{i+1}=a_{i+1}$. The sequence $v_{i-1},v_i,v_{i+1}$ is in clockwise order if and only if $v_{i-1}<v_{i+1}$; that is, if and only if $-a_{i-1}<a_{i+1}$. Since
\[
m_{i+1,i-1}=a_{i-1}b_{i+1}-b_{i-1}a_{i+1}=a_{i-1}+a_{i+1},
\]
we see that $v_{i-1},v_i,v_{i+1}$  is in clockwise order if and only if $m_{i+1,i-1}>0$.
\end{proof}

Let us now prove Theorem~\ref{thm9}. Applying Theorem~\ref{thm6}, we choose a periodic bi-infinite path $\gamma=\langle\,\dotsc,v_{-1},v_0,v_1,v_2,\dotsc\rangle$ of period $n$ and a corresponding tame frieze $\mathbf{M}$ of order $n$ with entries $m_{i,j}=a_jb_i-b_ja_i$. By Lemma~\ref{lemK}, the sequence $v_{i-1},v_i,v_{i+1}$ is in clockwise order for all $i\in\mathbb{Z}$ if and only if $m_{i+1,i-1}>0$ for all $i\in\mathbb{Z}$. That is, $\gamma \in \mathscr{C}_n^0$ if and only if $\mathbf{M}\in \FR_n^0$. This completes the proof of Theorem~\ref{thm9}.

\section{Combinatorics of positive infinite friezes}\label{sec8}

In this section we resume the discussion of the introduction relating our geometric characterisation of positive infinite friezes using the Farey graph to the combinatorial model of such friezes from \cites{BaPaTs2016,BaPaTs2018}. The class of combinatorial objects that we need are triangulations of infinitely-many-sided polygons -- or apeirogons, as they are often called. 

\begin{definition}
A \emph{hyperbolic triangulated apeirogon} comprises the sets of vertices and edges obtained from the union of infinitely many triangles in the Farey graph, where any two of these triangles can be connected to one another by a finite sequence of successively adjacent triangles from the union.  
\end{definition}

We usually omit the adjective `hyperbolic' and simply write `triangulated apeirogon'.

To appreciate the structure of triangulated apeirogons, it helps to consider the dual graph of the Farey graph, which is an infinite trivalent tree. Each vertex of this tree corresponds to a triangle of the Farey graph, and a triangulated apeirogon corresponds to an infinite connected subtree of the dual graph.

Let us explore a method for constructing a triangulated apeirogon $\mathscr{A}$ from a clockwise bi-infinite path $\gamma=\langle\,\dotsc,v_{-1},v_0,v_1,v_2\dotsc\rangle$. For each integer $j$, the open interval $I_j$ in $\mathbb{R}_\infty$ that runs clockwise from $v_{j-1}$ to $v_{j+1}$ contains $v_j$ and all but finitely many neighbours of $v_j$ in the Farey graph $\mathscr{F}$. The set of vertices of $\mathscr{A}$ consists of all the vertices $v_j$, and, for each integer $j$, all the neighbours of $v_j$ in $\mathscr{F}$ that lie outside $I_j$ (some of which may well be other vertices of $\gamma$). The edges of $\mathscr{A}$ are those edges of $\mathscr{F}$ that connect the vertices of $\mathscr{A}$. See Figures~\ref{fig15} and~\ref{fig16} for illustrations of this construction.

It follows quickly from the definition that $\mathscr{A}$ is a triangulated apeirogon. The vertices of $\gamma$ occur in clockwise order around $\mathscr{A}$, with no other vertices of $\mathscr{A}$ in between. The remaining vertices of $\mathscr{A}$ themselves form a path, as Theorem~\ref{thmXYZ}, to follow shortly, confirms. Before stating this theorem, we introduce an elementary lemma.

\begin{lemma}\label{lemT}
Given any irrational number $\alpha$ there is an increasing sequence of rationals $x_1,x_2,\dotsc$ with limit $\alpha$ and a decreasing sequence of rationals $y_1,y_2,\dotsc$ with limit $\alpha$ such that $x_n$ and $y_n$ are adjacent in $\mathscr{F}$, for each $n=1,2,\dotsc$. 
\end{lemma}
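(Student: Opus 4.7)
The plan is a Farey mediant construction. Initialise with $x_1=\lfloor\alpha\rfloor$ and $y_1=\lfloor\alpha\rfloor+1$; these are consecutive integers, hence adjacent in $\mathscr{F}$, and they straddle $\alpha$. Inductively, given an adjacent pair $x_n=a/b$ and $y_n=c/d$ in reduced form (with positive denominators) satisfying $x_n<\alpha<y_n$, the adjacency forces $cb-ad=1$, so $y_n-x_n=1/(bd)$. The Farey mediant $m=(a+c)/(b+d)$ is reduced, lies strictly between $x_n$ and $y_n$, and is adjacent to each of them in $\mathscr{F}$ (the identities $(a+c)b-(b+d)a=cb-ad=1$ and $(a+c)d-(b+d)c=cb-ad=1$ give this at once). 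Since $\alpha$ is irrational, $\alpha\neq m$, and I would set $(x_{n+1},y_{n+1})=(x_n,m)$ if $\alpha<m$ and $(x_{n+1},y_{n+1})=(m,y_n)$ if $\alpha>m$. This preserves adjacency, the straddling property, and the monotonicity conditions $x_{n+1}\geq x_n$ and $y_{n+1}\leq y_n$.

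Next, I would establish convergence via the usual Farey denominator bound. At each step the denominator of the entry that changes equals $b_n+d_n$, so $b_{n+1}+d_{n+1}\geq b_n+d_n+1$; hence $b_n+d_n\to\infty$. Combined with the elementary estimate $b_nd_n\geq b_n+d_n-1$ for positive integers $b_n,d_n$, this gives $y_n-x_n=1/(b_nd_n)\to 0$. Since $(x_n)$ is non-decreasing and bounded above by $\alpha$, and $(y_n)$ is non-increasing and bounded below by $\alpha$, both sequences converge to the common limit $\alpha$.

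If \emph{increasing} and \emph{decreasing} are intended in the strict sense, I would pass to a subsequence by recording only those stages at which both coordinates have changed since the last recorded pair; such stages occur infinitely often because neither sequence can be eventually constant (otherwise its denominator would stabilise, contradicting the denominator-growth step above). The main technical content is the Farey mediant denominator estimate, which is precisely what the tessellation structure of $\mathscr{F}$ makes transparent, and no real obstacle arises beyond this standard computation.
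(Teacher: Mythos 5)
Your proof is correct and complete, but it takes a different (and more self-contained) route than the paper. The paper disposes of this lemma in one sentence: take $(x_n)$ and $(y_n)$ to be the odd- and even-indexed convergents of the regular continued fraction expansion of $\alpha$, and omit the details (adjacency of consecutive convergents follows from the standard identity $p_{n-1}q_n-p_nq_{n-1}=(-1)^n$, and the two subsequences straddle $\alpha$ monotonically). Your Farey-mediant bisection is the ``slow'' version of the same algorithm: instead of jumping directly between convergents you insert one mediant at a time, which costs you an explicit convergence argument (the denominator-sum growth plus $b_nd_n\geq b_n+d_n-1$) but buys a proof that needs no imported facts about continued fractions and stays entirely inside the arithmetic of $\mathscr{F}$ that the paper has already set up. Both are standard; yours is arguably better matched to the paper's stated intent of avoiding continued fractions.

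One small slip: to get strict monotonicity you claim that neither sequence can be eventually constant ``otherwise its denominator would stabilise, contradicting the denominator-growth step.'' That step only shows $b_n+d_n\to\infty$, which is compatible with one of the two denominators stabilising while the other grows, so the parenthetical does not actually yield a contradiction. The fact itself is still immediate from what you have already proved: you show $x_n\to\alpha$ and $y_n\to\alpha$ with $\alpha$ irrational, so neither sequence of rationals can be eventually constant. With that one-line repair the subsequence extraction, and hence the whole argument, goes through.
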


This lemma could be proved, for example, by choosing $(x_n)$ and $(y_n)$ to be the odd and even subsequences of the sequence of convergents of the regular continued fraction expansion of $\alpha$. We omit the details.

In the next theorem we describe a clockwise path as \emph{forward-finite} if it has a final vertex and \emph{forward-infinite} if it has no final vertex and so continues indefinitely in the forward direction.

\begin{theorem}\label{thmXYZ}
Let $\gamma$ be a clockwise bi-infinite path and let $\mathscr{A}$ be the associated triangulated apeirogon. Those vertices of $\mathscr{A}$ that are not in $\gamma$ form a clockwise path $\delta$.
\begin{enumerate}
\item Suppose that $\gamma_{-\infty}=\gamma_\infty$. If $\gamma_{-\infty}$ is rational, then $\delta$ comprises the single vertex $\gamma_{-\infty}$, and if $\gamma_{-\infty}$ is irrational then $\delta$ is the empty set.
\item Suppose that $\gamma_{-\infty}\neq\gamma_\infty$. If $\gamma_{-\infty}$ is rational, then $\delta$ is forward-finite with final vertex $\gamma_{-\infty}$, and if $\gamma_{-\infty}$ is irrational then $\delta$ is forward-infinite with forward limit $\gamma_{-\infty}$.

An analogous statement holds for the backward end of $\delta$.
\end{enumerate}
\end{theorem}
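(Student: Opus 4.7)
The plan is to analyse the apeirogon $\mathscr{A}$ locally at each vertex $v_j$ of $\gamma$ and then glue the local pictures together. Fix $j$. The neighbours of $v_j$ in $\mathscr{F}$ sit in a natural cyclic order on $\mathbb{R}_\infty$, and consecutive neighbours in this cyclic order are themselves adjacent in $\mathscr{F}$ (the standard ``fan'' around a Farey vertex). Because $I_j$ already contains all but finitely many of these neighbours, the outside neighbours form a finite block bounded at its ends by $v_{j-1}$ and $v_{j+1}$. Listing this block in clockwise order around $v_j$ produces a finite path in $\mathscr{F}$ of the form $v_{j+1}=w_{j,0},w_{j,1},\dots,w_{j,k_j}=v_{j-1}$, where the intermediate vertices $w_{j,i}$ may or may not themselves lie in $\gamma$.

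To assemble these local blocks into the global path $\delta$, I will introduce the \emph{outside common neighbour} $c_j$ of $v_{j-1}$ and $v_j$: the third vertex of the Farey triangle sharing the edge $v_{j-1}v_j$ that lies on the side of that edge containing the rest of $\gamma$'s vertices. By construction $c_j$ is the penultimate entry $w_{j,k_j-1}$ of the block at $v_j$ and simultaneously the second entry $w_{j-1,1}$ of the block at $v_{j-1}$, so consecutive blocks interlock at $c_j$. Chaining the blocks and then deleting the $v_k$'s leaves a single sequence $\delta$ whose consecutive entries are still adjacent in $\mathscr{F}$ (they are either consecutive $w$'s within one block or lie on either side of a shared $c_j$). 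The clockwise ordering within each block propagates to $\delta$, establishing that $\delta$ is a clockwise path in $\mathscr{F}$.

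For the endpoint analysis in part~(ii) I focus on the forward end of $\delta$; the backward end is symmetric. As $j\to-\infty$ the vertex $v_j$ tends to $\gamma_{-\infty}$, and so does $c_j$. If $\gamma_{-\infty}$ is rational, it is a Farey vertex, and Lemma~\ref{lemA} (with $x=\gamma_{-\infty}$ and $y$ ranging over the few Farey neighbours of $\gamma_{-\infty}$ on the $\gamma$ side) forces $v_j$ and $v_{j-1}$ to be adjacent to $\gamma_{-\infty}$ for all sufficiently negative $j$; hence $c_j=\gamma_{-\infty}$ for such $j$, and $\delta$ stabilises and terminates at $\gamma_{-\infty}$. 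If $\gamma_{-\infty}$ is irrational, it is not a Farey vertex and cannot appear in $\delta$; applying Lemma~\ref{lemT} to produce adjacent pairs $x_n\uparrow\gamma_{-\infty}\downarrow y_n$ and using Lemma~\ref{lemA} to see that $\gamma$ is eventually squeezed across each edge $x_ny_n$, I would conclude that $\delta$-vertices occur arbitrarily close to $\gamma_{-\infty}$, making $\delta$ forward-infinite with forward limit $\gamma_{-\infty}$. Case~(i) is then the fusion of the two ends: if $\gamma_{-\infty}=\gamma_\infty$ is rational, both ends collapse onto the single Farey vertex $\gamma_{-\infty}$ and $\delta$ consists of that one vertex; if irrational, both ends accumulate at the same non-vertex with no Farey vertex between them, and $\delta$ is empty.

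The hardest step, I expect, will be this endpoint analysis. In the irrational subcases one has to rule out the possibility that $\delta$ terminates prematurely at some unintended Farey vertex, and in the empty subcase of~(i) one has to show that \emph{every} outside common neighbour $c_j$ is absorbed into $\gamma$; both arguments rely on a careful and uniform use of the separation property of Lemma~\ref{lemA} together with the approximation Lemma~\ref{lemT}, and this is where the proof will need the most care.
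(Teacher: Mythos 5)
Your local ``block'' description of $\mathscr{A}$ around each $v_j$ is accurate, and your endpoint analysis follows the same lines as the paper (Lemmas~\ref{lemA} and~\ref{lemT} applied to squeeze the tails of $\gamma$ across edges near $\gamma_{-\infty}$). But the central step --- that after chaining the blocks and deleting the $\gamma$-vertices, consecutive survivors are adjacent --- has a genuine gap. You acknowledge that the intermediate entries $w_{j,i}$, and in particular the linking vertices $c_j$, may themselves be vertices of $\gamma$ (this happens whenever $\gamma$ revisits a neighbour of $v_j$ from the far side, e.g.\ along $\langle\dots,-1,\infty,2,\tfrac32,1,\tfrac23,\dots\rangle$, where the block at $\infty$ is $2,1,0,-1$ with $2,1,-1$ all on $\gamma$). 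In that situation your dichotomy ``consecutive $w$'s within one block, or flanking a shared $c_j$'' does not apply: deleting a $\gamma$-vertex from the interior of a block leaves its two fan-neighbours consecutive in the chained sequence, and two vertices at distance two in the fan around $v_j$ are in general \emph{not} adjacent in $\mathscr{F}$ (consider $2$ and $0$ in the fan around $\infty$); likewise when $c_j\in\gamma$ the blocks at $v_j$ and $v_{j-1}$ no longer interlock at a $\delta$-vertex, and several consecutive blocks may contribute nothing to $\delta$ at all, so one must connect the last $\delta$-vertex before such a stretch to the first one after it by some other means. The theorem is still true in these cases, but only because additional $\delta$-vertices supplied by \emph{other, non-consecutive} $v_k$ fill the gap --- a fact your local-to-global argument never establishes. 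The paper sidesteps all of this with a direct argument: given consecutive non-$\gamma$ vertices $w_{i-1},w_i$ of $\mathscr{A}$, take the \emph{largest} index $j$ with $v_j\sim w_i$, normalise so that $v_j=0$ and $w_i=\infty$, and use Lemma~\ref{lemA} to pin down $w_{i-1}=-1$. (It also proves, which you omit, that the non-$\gamma$ vertices are discretely ordered, i.e.\ accumulate only at $\gamma_{\pm\infty}$, using that only finitely many Farey edges exceed any given Euclidean diameter.)

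A secondary, fixable error: in the rational subcase of (ii) you claim Lemma~\ref{lemA} forces $v_j$ and $v_{j-1}$ to be adjacent to $\gamma_{-\infty}$ for \emph{all} sufficiently negative $j$, whence $c_j=\gamma_{-\infty}$ eventually. That is false: normalising $\gamma_{-\infty}=\infty$, the backward tail must pass through every sufficiently large integer, but it may interpose non-integer vertices (e.g.\ the tail $1,\tfrac32,2,\tfrac52,3,\dots$), infinitely many of which are not adjacent to $\infty$. What Lemma~\ref{lemA} actually gives is that \emph{infinitely many} $v_j$ are adjacent to $\gamma_{-\infty}$, which is all that is needed to conclude $\gamma_{-\infty}\in\mathscr{A}$ and hence that it is the final vertex of the clockwise path $\delta$.
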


\begin{proof}
Let $\gamma=\langle\dotsc,v_{-1},v_0,v_1,v_2,\dotsc\rangle$, and let $\mathscr{B}$ denote the set of those vertices of $\mathscr{A}$ that are not vertices of $\gamma$.

Suppose first that $\gamma_{-\infty}=\gamma_\infty$. Since there are no vertices of $\mathscr{A}$ in-between any two  vertices $v_{i-1}$ and $v_i$ of $\gamma$ (in a clockwise sense), we see that $\mathscr{B}$ comprises at most one point, $\gamma_{-\infty}$ itself. If $\gamma_{-\infty}$ is irrational then it cannot be a vertex of $\mathscr{A}$. If $\gamma_{-\infty}$ is rational, then, after applying an element of $\text{SL}_2(\mathbb{Z})$ we can assume that $\gamma_{-\infty}=\infty$. In this case we see from Lemma~\ref{lemA} (with $x=0$ and $y=\infty$) that $\gamma$ must pass through 0, so $\infty\in\mathscr{B}$, because $\infty$ is adjacent to $0$ in $\mathscr{F}$. This proves statement (i).

Suppose now that $\gamma_{-\infty}\neq\gamma_\infty$. The set $\mathscr{B}$ lies within the closed interval that runs clockwise from $\gamma_\infty$ to $\gamma_{-\infty}$. An argument similar to that just applied shows that $\gamma_{-\infty}\in\mathscr{B}$ if (and only if) it is rational. Next, the set $\mathscr{B}$ cannot accumulate at any points of $\mathbb{R}_\infty$ other than $\gamma_{-\infty}$ and $\gamma_\infty$. This follows from the fact that $\gamma$ accumulates only at $\gamma_{-\infty}$ and $\gamma_\infty$, and, in the disc model of the Farey graph, there are only finitely many edges of the Farey graph of Euclidean diameter greater than any given positive number. As a consequence, we can write the elements of $\mathscr{B}$ as $\dotsc,w_{-1},w_0,w_1,w_2,\dotsc$, where this sequence is in clockwise order and it may be finite, half-infinite or bi-infinite.

Consider now two consecutive terms $w_{i-1}$ and $w_{i}$; we will prove that they are adjacent in $\mathscr{F}$. Let $j$ be the largest index such that $v_j$ is adjacent to $w_i$ in $\mathscr{F}$. After applying an element of $\text{SL}_2(\mathbb{Z})$ we can assume that $v_j=0$ and $w_i=\infty$; in which case, by the clockwise ordering, $v_{j+1}$ is equal to $-1/n$, for some positive integer $n$, and $w_{i-1}$ is a negative integer. Suppose that $\gamma_\infty<-1$. Then, by Lemma~\ref{lemA} (with $x=-1$ and $y=\infty$), we must have $v_k=-1$ for some $k>j$. But $-1$ is adjacent to $\infty$, so this contradicts the definition of $j$. Hence $\gamma_\infty\geq -1$. Now, since $0$ is a vertex of $\gamma$, and $-1$ is adjacent to $0$, we see that $-1$ is a vertex of $\mathscr{B}$. On the other hand, none of the integers $-2,-3,\dotsc$ are adjacent to any vertex of $\gamma$, so they are not vertices of $\mathscr{B}$. Hence $w_{i-1}=-1$, and $w_{i-1}$ and $w_{i}$ are indeed adjacent in $\mathscr{F}$. It follows that $\delta=\langle\,\dotsc,w_{-1},w_0,w_1,w_2,\dotsc\rangle$ is a clockwise path in $\mathscr{F}$.

If $\gamma_{-\infty}$ is rational, then (as we have seen) it is a vertex of $\delta$, and it must be the final vertex, since $\delta$ is a clockwise path. If $\gamma_{-\infty}$ is equal to some irrational $\alpha$, then we can choose increasing and decreasing sequences of rationals $(x_n)$ and $(y_n)$ both with limit $\alpha$ such that $x_n$ and $y_n$ are adjacent in $\mathscr{F}$ for each index $n$. By Lemma~\ref{lemA}, the path $\gamma$ must pass through one of $x_n$ or $y_n$ for all but finitely many indices $n$. In fact, since $x_n<\alpha<y_n$ we see that $\gamma$ must pass through $x_n$ (for large $n$). Hence $y_n$ is a vertex of $\delta$, because it is adjacent to $x_n$ in $\mathscr{F}$. In this way we see that $\delta$ is forward-infinite, with forward limit $\delta_\infty=\gamma_{-\infty}$. This proves statement (ii).
\end{proof}

The path $\delta$ in Theorem~\ref{thmXYZ} is in a sense dual to $\gamma$. This concept of a dual path in this context could be formalised, but we will not do so here.

We illustrate Theorem~\ref{thmXYZ} with two examples, shown in Figure~\ref{fig15}. The first of these two figures is repeated from Figure~\ref{fig11}(a).

\begin{figure}[ht]
\centering
\begin{tikzpicture}[scale=2.5]

\begin{scope}
\discfareygraph[thin,grey]{4};

\node[] at (0:\radplusminus) {$1$};
\node[] at (180:\radplus) {$-1$};

\node[gammacol] at (-90:\radplus) {$0$};
\node[gammacol] at ({2*atan(1/2)-90}:\radplusminus) {$\tfrac12$};
\node[gammacol] at ({-2*atan(1/2)-90}:\radplus) {$-\tfrac12$};
\node[gammacol] at ({2*atan(2/3)-90}:\radplusminus) {$\tfrac23$};
\node[gammacol] at ({-2*atan(2/3)-90}:\radplus) {$-\tfrac23$};

\foreach \n in {1,...,20}{
	\hgline[gammacol,thick](0,0)(2*atan((\n-1)/\n)-90:2*atan(\n/(\n+1))-90:\rad);
	\hgline[gammacol,thick](0,0)(-2*atan(\n/(\n+1))-90:-2*atan((\n-1)/\n)-90:\rad);
}

\dirhgline[gammacol,draw=none,thick](0.5,0.8,204)(-90:2*atan(1/2)-90:\rad);
\dirhgline[gammacol,draw=none,thick](0.5,0.4,240)(2*atan(1/2)-90:2*atan(2/3)-90:\rad);
\dirhgline[gammacol,draw=none,thick](0.48,0.8,155)(-2*atan(1/2)-90:-90:\rad);
\dirhgline[gammacol,draw=none,thick](0.5,0.4,122)(-2*atan(2/3)-90:-2*atan(1/2)-90:\rad);

 \node[gammacol] at (-50:0.56*\rad) {$\gamma$};

\node at (-1.15,-1.15) {(a)}; 
\end{scope}

\begin{scope}[xshift=3cm]
\discfareygraph[thin,grey]{4};

\draw[thin] (-26.565:\rad) -- ++(0.15,0) node[right,xshift=-3pt,yshift=0pt] {{\scriptsize$\tfrac12(\sqrt5-1)$}};
\draw[thin] (206.565:\rad) -- ++(-0.15,0) node[left,xshift=3pt,yshift=0pt] {{\scriptsize$-\tfrac12(\sqrt5-1)$}};

\node[gammacol] at (-90:\radplus) {$0$};
\node[gammacol,xshift=-3pt,yshift=-2pt] at ({2*atan(1/2)-90}:\radplus) {$\tfrac12$};
\node[gammacol,yshift=-2pt] at ({-2*atan(1/2)-90}:\radplus) {$-\tfrac12$};
\node[gammacol,xshift=-3pt,yshift=-2pt] at ({2*atan(3/5)-90}:\radplus) {$\tfrac35$};
\node[gammacol,yshift=-2pt] at ({-2*atan(3/5)-90}:\radplus) {$-\tfrac35$};

\def\fib{{0,1,1,2,3,5,8,13,21,34,55,89,144,233}}
\foreach \n in {0,...,4}{
	\hgline[gammacol,thick](0,0)(2*atan(\fib[2*\n]/\fib[2*\n+1])-90:2*atan(\fib[2*\n+2]/\fib[2*\n+3])-90:\rad);
	\hgline[gammacol,thick](0,0)(-2*atan(\fib[2*\n+2]/\fib[2*\n+3])-90:-2*atan(\fib[2*\n]/\fib[2*\n+1])-90:\rad);
}

\dirhgline[gammacol,draw=none,thick](0.5,0.8,204)(-90:2*atan(1/2)-90:\rad);
\dirhgline[gammacol,draw=none,thick](0.55,0.3,235)(2*atan(1/2)-90:2*atan(3/5)-90:\rad);
\dirhgline[gammacol,draw=none,thick](0.48,0.8,155)(-2*atan(1/2)-90:-90:\rad);
\dirhgline[gammacol,draw=none,thick](0.52,0.3,122)(-2*atan(3/5)-90:-2*atan(1/2)-90:\rad);

 \node[gammacol] at (-50:0.56*\rad) {$\gamma$};

\node at (-1.15,-1.15) {(b)}; 
\end{scope}

\end{tikzpicture}
\caption{Two pairs of clockwise bi-infinite paths in the Farey graph}
\label{fig15}
\end{figure}

Figure~\ref{fig15}(a) illustrates the path
\[
\gamma =\left\langle\,\dotsc ,\tfrac34,\tfrac23,\tfrac12,0,-\tfrac12,-\tfrac23,-\tfrac34,\dotsc \right\rangle 
\]
with vertices $a_i/b_i$, where $a_i=-i$ and $b_i=|i|+1$, for $i\in\mathbb{Z}$. For this path, the limits $\gamma_{-\infty}=1$ and $\gamma_\infty=-1$  are both rational. The triangulated apeirogon constructed from $\gamma$ is shown in Figure~\ref{fig16}(a). The dual path $\delta=\langle -1,\infty,1\rangle$ is a finite path.

Figure~\ref{fig15}(b) illustrates the path
\[
\gamma =\left\langle\,\dotsc ,\tfrac8{13},\tfrac35,\tfrac12,0,-\tfrac12,-\tfrac35,-\tfrac8{13},\dotsc \right\rangle
\]
with vertices $a_i/b_i$, where $a_i=-\sgn(i)F_{2|i|}$ and $b_i=F_{2|i|+1}$, for $i\in\mathbb{Z}$ (here $\sgn(i)$ is $-1$ if $i<0$, 0 if $i=0$, and $1$ if $i>0$, and $F_n$ is the $n$th Fibonacci number). In this case the limits $\gamma_{-\infty} = \tfrac12(\sqrt5-1)$ and $\gamma_\infty = -\tfrac12(\sqrt5-1)$ are both irrational. The triangulated apeirogon constructed from $\gamma$ is shown in Figure~\ref{fig16}(b). The dual path
\(
\delta = \left\langle \dotsc,-\tfrac58,-\tfrac23,-1,\infty,1,\tfrac23,\tfrac58,\dotsc\right\rangle
\)
is a bi-infinite path. 

\begin{figure}[ht]
\centering
\begin{tikzpicture}[scale=2.5]

\begin{scope}
\draw[thin,grey] (0,0) circle(\rad);
\node[] at (0:\radplusminus) {$1$};
\node[] at (180:\radplus) {$-1$};

\hgline[thin,black](0,0)(-90:90:\rad);
\hgline[deltacol,thick](0,0)(0:90:\rad);
\hgline[deltacol,thick](0,0)(90:180:\rad);
\dirhgline[deltacol,draw=none,thick](0.5,0.8,45)(90:180:\rad);
\dirhgline[deltacol,draw=none,thick](0.5,0.8,-45)(0:90:\rad);

\node[deltacol] at (140:0.32*\rad) {$\delta$};

\foreach \n in {0,...,20}{
	\hgline[black](0,0)(2*atan(\n/(\n+1))-90:0:\rad);
	\hgline[black](0,0)(180:2*atan(\n/(\n+1))+90:\rad);
}

\foreach \n in {1,...,20}{
	\hgline[gammacol,thick](0,0)(2*atan((\n-1)/\n)-90:2*atan(\n/(\n+1))-90:\rad);
	\hgline[gammacol,thick](0,0)(-2*atan(\n/(\n+1))-90:-2*atan((\n-1)/\n)-90:\rad);
}

\dirhgline[gammacol,draw=none,thick](0.5,0.8,204)(-90:2*atan(1/2)-90:\rad);
\dirhgline[gammacol,draw=none,thick](0.5,0.4,240)(2*atan(1/2)-90:2*atan(2/3)-90:\rad);
\dirhgline[gammacol,draw=none,thick](0.48,0.8,155)(-2*atan(1/2)-90:-90:\rad);
\dirhgline[gammacol,draw=none,thick](0.5,0.4,122)(-2*atan(2/3)-90:-2*atan(1/2)-90:\rad);

 \node[gammacol] at (-50:0.56*\rad) {$\gamma$};

\node at (-1.15,-1.15) {(a)}; 
\end{scope}
\begin{scope}[xshift=3cm]
\draw[thin,grey] (0,0) circle(\rad);


\def\fib{{0,1,1,2,3,5,8,13,21,34,55,89,144,233}}
\foreach \n in {0,...,4}{
	\hgline[gammacol,thick](0,0)(2*atan(\fib[2*\n]/\fib[2*\n+1])-90:2*atan(\fib[2*\n+2]/\fib[2*\n+3])-90:\rad);
	\hgline[gammacol,thick](0,0)(-2*atan(\fib[2*\n+2]/\fib[2*\n+3])-90:-2*atan(\fib[2*\n]/\fib[2*\n+1])-90:\rad);
}

\foreach \a/\b in {0/270,90/-90,180/270,-36.87/0,-36.87/-22.62,180/216.87,202.62/216.87}{
	\hgline[black](0,0)(\a:\b:\rad);
}
\foreach \a/\b in {-28.07/-25.99,-28.07/-22.62,205.99/208.07,202.62/208.07}{
	\hgline[black](0,0)(\a:\b:\rad);
}

\dirhgline[gammacol,draw=none,thick](0.5,0.8,204)(-90:2*atan(1/2)-90:\rad);
\dirhgline[gammacol,draw=none,thick](0.55,0.3,235)(2*atan(1/2)-90:2*atan(3/5)-90:\rad);
\dirhgline[gammacol,draw=none,thick](0.48,0.8,155)(-2*atan(1/2)-90:-90:\rad);
\dirhgline[gammacol,draw=none,thick](0.52,0.3,122)(-2*atan(3/5)-90:-2*atan(1/2)-90:\rad);

 \node[gammacol] at (-50:0.56*\rad) {$\gamma$};

\hgline[deltacol,thick](0,0)(0:90:\rad);
\hgline[deltacol,thick](0,0)(90:180:\rad);
\hgline[deltacol,thick](0,0)(-22.65:0:\rad);
\hgline[deltacol,thick](0,0)(180:202.65:\rad);
\hgline[deltacol,thick](0,0)(-25.99:-22.62:\rad);
\hgline[deltacol,thick](0,0)(202.62:205.99:\rad);
\hgline[deltacol,thick](0,0)(-26.78:-25.99:\rad);
\hgline[deltacol,thick](0,0)(205.99:206.78:\rad);
\dirhgline[deltacol,draw=none,thick](0.5,0.8,45)(90:180:\rad);
\dirhgline[deltacol,draw=none,thick](0.5,0.8,-45)(0:90:\rad);
\dirhgline[deltacol,draw=none,thick](0.48,0.6,265)(-22.65:0:\rad);
\dirhgline[deltacol,draw=none,thick](0.52,0.6,95)(180:202.65:\rad);

\node[deltacol] at (140:0.32*\rad) {$\delta$};

\draw[thin] (-26.565:\rad) -- ++(0.15,0) node[right,xshift=-3pt,yshift=0pt] {{\scriptsize$\tfrac12(\sqrt5-1)$}};
\draw[thin] (206.565:\rad) -- ++(-0.15,0) node[left,xshift=3pt,yshift=0pt] {{\scriptsize$-\tfrac12(\sqrt5-1)$}};

\node at (-1.15,-1.15) {(b)}; 
\end{scope}

\end{tikzpicture}
\caption{Triangulated apeirogons obtained from the clockwise bi-infinite paths of Figure~\ref{fig15}}
\label{fig16}
\end{figure}

Triangulations of essentially the same type as those considered here appear in \cites{BaPaTs2016,BaPaTs2018}, where they are defined combinatorially; however, in that work the underlying geometrical object is an infinite strip rather than a disc (or half-plane). This is not a significant difference because any triangulated apeirogon  can be transported between a disc and a strip using a conformal map. Indeed, an infinite strip is a particularly suitable space for a triangulation $\mathscr{A}$ constructed from a clockwise bi-infinite path $\gamma$ when the limits $\gamma_{-\infty}$ and $\gamma_\infty$  are distinct, because those limit points can then be chosen to correspond to the two infinite ends of the strips. And in that case, the vertices of $\gamma$ and $\delta$ occupy opposite sides of the strip boundary. From our geometric perspective, the strip model is less desirable when the backward and forward limits coincide.

It was observed in \cite{BaPaTs2016}*{Theorem~5.2} that the quiddity sequence of a positive infinite frieze can be read off from the corresponding triangulation. We can explain this in our terminology as follows. 

Consider a triangulated apeirogon $\mathscr{A}$ constructed from a clockwise bi-infinite path $\gamma$. Let $\mathbf{M}$ be the positive infinite frieze corresponding to $\gamma$. By Theorem~\ref{thmZ}, the itinerary for $\gamma$ is equal to the quiddity sequence of $\mathbf{M}$. But the $i$th term of the itinerary just records the number of triangles in $\mathscr{A}$ that are incident to the $i$th vertex of $\gamma$. Thus we can obtain the quiddity sequence from an infinite form of the triangle-counting procedure originally employed by Conway and Coxeter.

We finish this section with a theorem that characterises clockwise bi-infinite paths for which $\gamma_{-\infty}$ and $\gamma_\infty$ are equal and irrational by using the corresponding positive infinite frieze $\mathbf{M}$. One can obtain similar results to classify the other types of paths that we have considered here (with rational/irrational limits which may or may not coincide) using the corresponding friezes, but to go into this now would draw out this discussion beyond reasonable bounds.

\begin{theorem}\label{thm99}
Let $\gamma$ be a clockwise bi-infinite path and let $\mathbf{M}$ be the positive infinite frieze such that $\widetilde{\Phi}(\gamma)=\pm \mathbf{M}$. The following statements are equivalent:
\begin{enumerate}
\item\label{thm99a} $\gamma_{-\infty}$ and $\gamma_{\infty}$ coincide and are equal to an irrational number
\item\label{thm99b} there are increasing sequences of positive integers $i_1,i_2,\dotsc$ and $j_1, j_2,\dotsc$ with $m_{i_k,-j_k}=1$, for $k=1,2,\dotsc$.
\end{enumerate}
\end{theorem}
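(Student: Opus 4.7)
The plan is to prove the two implications separately. For (i)~$\Rightarrow$~(ii), I would use the Farey convergents of $\alpha:=\gamma_{-\infty}=\gamma_\infty$ together with Lemma~\ref{lemA}. Specifically, Lemma~\ref{lemT} supplies rationals $x_n\nearrow\alpha$ and $y_n\searrow\alpha$ with $x_n\sim y_n$ in $\mathscr{F}$. For each large $n$, one component of $\mathbb{R}_\infty\setminus\{x_n,y_n\}$ is a short open arc containing $\alpha$; since $v_i\to\alpha$ as $|i|\to\infty$ and $v_0$ is rational hence distinct from the irrational $\alpha$, both tails of $\gamma$ eventually lie in this arc whereas $v_0$ does not. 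Applying Lemma~\ref{lemA} to a finite subpath from $v_0$ to $v_i$ for large $i>0$, and to a subpath from $v_{-j}$ to $v_0$ for large $j>0$, forces $\gamma$ to visit $\{x_n,y_n\}$ in both the forward and backward directions. Because the vertices of a clockwise bi-infinite path are pairwise distinct, the two visits occur at \emph{different} elements of $\{x_n,y_n\}$, so there exist indices $i_n^+>0$ and $j_n^->0$ with $\{v_{i_n^+},v_{-j_n^-}\}=\{x_n,y_n\}$. Adjacency of these vertices in $\mathscr{F}$, combined with the positivity of $\mathbf{M}$ strictly below the diagonal, yields $m_{i_n^+,-j_n^-}=1$; and since $x_n,y_n\to\alpha$ while the vertices of $\gamma$ are distinct, $i_n^+$ and $j_n^-$ are forced to infinity, so a subsequence supplies the required strictly increasing sequences.

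For (ii)~$\Rightarrow$~(i) I would argue in two stages. First, if $\gamma_{-\infty}\neq\gamma_\infty$ then the Euclidean distance on $\mathbb{S}$ between $v_{i_k}$ and $v_{-j_k}$ is bounded below by some $\varepsilon>0$ for all large $k$, so the pairwise distinct edges $v_{i_k}v_{-j_k}$ of $\mathscr{F}$ all have Euclidean diameter at least $\varepsilon$, contradicting the fact noted in the proof of Theorem~\ref{thmXYZ} that only finitely many edges of $\mathscr{F}$ have Euclidean diameter exceeding any fixed positive bound. Hence $\alpha:=\gamma_{-\infty}=\gamma_\infty$. Next, suppose for contradiction that $\alpha$ is rational; after applying an element of $\text{SL}_2(\mathbb{Z})$ (which leaves $\mathbf{M}$ unchanged) we may take $\alpha=\infty$. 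The clockwise hypothesis, under the modified Cayley transform identifying $\mathbb{R}_\infty$ with $\mathbb{S}$ via $\infty\mapsto i$, then forces $\gamma$ to wrap once around $\mathbb{R}_\infty$ traversing real values in locally decreasing order, so that $v_{-j}\to+\infty$ along $\mathbb{R}$ as $j\to+\infty$ while $v_i\to-\infty$ along $\mathbb{R}$ as $i\to+\infty$. Writing $v_{i_k}=p_k/q_k$ and $v_{-j_k}=r_k/s_k$ as reduced rationals with $q_k,s_k>0$, one finds
\[
|p_ks_k-q_kr_k|=q_ks_k\left|\frac{p_k}{q_k}-\frac{r_k}{s_k}\right|\geq\left|\frac{p_k}{q_k}-\frac{r_k}{s_k}\right|\longrightarrow\infty,
\]
contradicting $m_{i_k,-j_k}=|p_ks_k-q_kr_k|=1$. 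Therefore $\alpha$ must be irrational, as required.

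The main obstacle is the last step: deducing from the clockwise hypothesis that, once $\alpha$ is normalised to $\infty$, the two tails of $\gamma$ approach $\infty$ through real values of \emph{opposite} signs. Everything else in the argument is routine once Lemma~\ref{lemA}, Lemma~\ref{lemT}, and the finite-Euclidean-diameter observation from the proof of Theorem~\ref{thmXYZ} are invoked, but the clockwise--decreasing correspondence near the wrap-around point on $\mathbb{S}$ is easy to orient backwards; once it is handled correctly, the arithmetic contradiction in the rational case is immediate.
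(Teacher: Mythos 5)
Your proposal is correct and follows essentially the same route as the paper's proof: the forward implication via Lemma~\ref{lemT} and Lemma~\ref{lemA}, and the converse via the finite-Euclidean-diameter observation followed by normalising a putative rational limit to $\infty$ and noting that the two tails then diverge to $+\infty$ and $-\infty$ respectively, which is incompatible with adjacency. The only cosmetic differences are that you identify the forward and backward visits to $\{x_n,y_n\}$ by distinctness of vertices rather than by the clockwise ordering, and that you make explicit the arithmetic estimate behind the paper's remark that far-separated rationals cannot be adjacent; the orientation worry you raise is settled by the paper's trichotomy for clockwise paths in Section~\ref{sec3}, which forces type~(i) when both limits equal $\infty$.
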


\begin{proof}
We use the usual notation $\gamma=\langle\,\dotsc,v_{-1},v_0,v_1,v_2\dotsc\rangle$, with $v_i=a_i/b_i$, for $i\in\mathbb{Z}$, where $a_ib_{i+1}-a_{i+1}b_i=1$.

Suppose first that $\gamma_{-\infty}$ and $\gamma_{\infty}$ coincide and are equal to an irrational number $\alpha$. By Lemma~\ref{lemT} we can choose increasing and decreasing sequences of rationals $(x_n)$ and $(y_n)$ both with limit $\alpha$ such that $x_n$ and $y_n$ are adjacent in $\mathscr{F}$ for each index $n$. Now, the sequence $v_0,v_{-1},v_{-2},\dotsc$ is eventually increasing, with limit $\alpha$, and the sequence $v_0,v_1,v_2,\dotsc$ is eventually decreasing, again with limit $\alpha$. By Lemma~\ref{lemA}, both sequences must pass through one of the points $x_n$ and $y_n$, for all but finitely many indices $n$, and, evidently, because of the ordering, the first sequence passes through $x_n$ and the second through $y_n$. Given any sufficiently large positive integer $n$, then, there are positive integers $i$ and $j$ for which $v_{i}=y_n$ and $v_{-j}=x_n$, so $v_{i}$ and $v_{-j}$ are adjacent and
\[
m_{i,-j}=a_{-j}b_i-b_{-j}a_i=1.
\]
Since this is so for all but finitely many pairs $x_n$ and $y_n$, we see that there are infinitely many positive integers $i$ and $j$ (all distinct) with $m_{i,-j}=1$.

For the converse,  suppose there are increasing sequences of positive integers $i_1,i_2,\dotsc$ and $j_1, j_2,\dotsc$ with $m_{i_k,-j_k}=1$, for $k=1,2,\dotsc$. It follows that $v_{i_k}$ is adjacent to $v_{-j_k}$, for each $k$. Now, for any positive constant $\varepsilon$ there are only finitely many edges of the disc model of the Farey graph of Euclidean diameter greater than $\varepsilon$, so we see that the convergent sequences $v_{i_1},v_{i_2},\dotsc$ and $v_{-j_1},v_{-j_2},\dotsc$ converge to the same limit $\alpha$. Hence $\gamma_{-\infty}=\gamma_{\infty}=\alpha$.

Suppose $\alpha\in\mathbb{Q}\cup\{\infty\}$. By applying an element of $\text{SL}_2(\mathbb{Z})$, we can assume that $\alpha=\infty$. Then $v_{i_1},v_{i_2},\dotsc$ is a decreasing sequence, unbounded below, and $v_{-j_1},v_{-j_2},\dotsc$ is an  increasing sequence, unbounded above. However, this is impossible, because $v_{i_k}$ cannot be adjacent to $v_{-j_k}$ for all $k=1,2,\dotsc$ under these circumstances. Therefore $\alpha$ is irrational.
\end{proof}

For an example of Theorem~\ref{thm99} in action, refer back to the path $\gamma$ illustrated in Figure~\ref{fig11}(b) and the corresponding positive infinite frieze $\mathbf{M}$ of Figure~\ref{fig12}(b). This frieze satisfies
\[
m_{i,-i} = F_{2i-1}F_{2i+1}-F_{2i}^2=1,
\]
for $i=1,2,\dotsc$, by Cassini's identity for Fibonacci numbers, so there is a 1 in every row of the frieze in standard form.


\section{Combinatorics of positive $\text{SL}_2$-tilings}\label{sec9}

A widely-used tool in the theory of $\text{SL}_2$-tilings is \emph{Conway--Coxeter counting}, which, for a hyperbolic triangulated apeirogon $\mathscr{A}$, can be described as follows. Choose any vertex $u$ of $\mathscr{A}$. We assign the value $0$ to that vertex. To each neighbour of $u$ in $\mathscr{A}$ we assign the value $1$. Next we carry out the following recursive process to assign a value to any particular vertex $v$ of $\mathscr{A}$. If $v$ is adjacent to two vertices of $\mathscr{A}$ that have already been assigned the values $b$ and $d$, then we assign the value $b+d$ to $v$. We denote the value at $v$ from this counting procedure by  $\kappa(u,v)$. 

Every vertex $v$ of $\mathscr{A}$ can be assigned a value $\kappa(u,v)$ that is uniquely specified by this process. We do not prove this fact, although it is not difficult. Actually, it can be established using similar ideas to those used to prove the next lemma. This lemma demonstrates how the arithmetic of the Farey graph neatly encapsulates Conway--Coxeter counting. We recall from the introduction that for reduced rationals $a/b$ and $c/d$, $\Delta(a/b,c/d)=|ad-bc|$.

\begin{lemma}\label{lemS}
Let $u$ and $v$ be vertices of a triangulated apeirogon. Then $\kappa(u,v)=\Delta(u,v)$.
\end{lemma}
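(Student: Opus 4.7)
The plan is to reduce to the case $u=\infty$ via the $\textnormal{SL}_2(\mathbb{Z})$-action and then induct along the dual tree of $\mathscr{A}$. Every $A\in\textnormal{SL}_2(\mathbb{Z})$ acts as a graph automorphism of $\mathscr{F}$ and preserves $\Delta$, because if $u=a/b$ and $v=c/d$ then the determinant $ad-bc$ is unchanged under the simultaneous column transformation $\binom{a}{b}\mapsto A\binom{a}{b}$, $\binom{c}{d}\mapsto A\binom{c}{d}$; hence I may normalise $u=\infty=1/0$. With that choice $\Delta(\infty,v)=|d|$ for $v=c/d$ in reduced form, and I fix the representation with $d\geq 0$ throughout. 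The base case of the induction is a starting triangle $T_0=\{u,n_1,n_2\}$ containing $u$, whose non-$u$ vertices are consecutive integers; the procedure directly assigns $\kappa(u,u)=0$ and $\kappa(u,n_i)=1$, both agreeing with $|d|$.

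For the inductive step a new triangle $T'=\{v_2,v_3,v\}$ is added across an edge $\{v_2,v_3\}$ of a previously-handled triangle $T=\{v_1,v_2,v_3\}$; the rule assigns $\kappa(u,v)=\kappa(u,v_2)+\kappa(u,v_3)=d_2+d_3$ by the hypothesis, and I must show $|d_v|=d_2+d_3$. Applying Lemma~\ref{lem2} to the length-two path $v_2,v,v_3$ (after adjusting signs so $c_2 d-c d_2=1$ and $c d_3-c_3 d=1$) produces an integer $k$ with $c_2+c_3=kc$ and $d_2+d_3=kd$; substituting into the Farey-adjacency relation $c_2 d_3-c_3 d_2=\pm 1$ gives $k=\pm 1$. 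Hence $|d_v|\in\{d_2+d_3,\,|d_2-d_3|\}$, and the same analysis applied in the old triangle $T$ --- using the path $v_2,v_1,v_3$ --- shows $|d_{v_1}|$ belongs to the same two-element set; since $v\neq v_1$, these two $|d|$-values exhaust it.

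What remains is the sign decision $|d_{v_1}|=|d_2-d_3|$, which forces $|d_v|=d_2+d_3$ and closes the induction. I would establish it by a secondary induction on dual-tree distance from $T_0$: at $T_0$ the vertex $|d|$-values $0,1,1$ verify the identity directly for any of the three possible edges to cross; for a later triangle $T$, the vertex of $T$ added when $T$ was first reached inherits the ``sum'' $|d|$-value from the preceding step of the main induction, so the remaining (older) vertex of $T$ automatically plays the ``difference'' role --- which is exactly the role of $v_1$ on the next edge crossing. The main obstacle is this sign bookkeeping: once the dual-tree ordering forces $v_1$ into the ``minus mediant'' slot for $\{v_2,v_3\}$, the new vertex $v$ occupies the ``plus mediant'' slot and the required identity $|d_v|=d_2+d_3=\kappa(u,v)=\Delta(u,v)$ follows.
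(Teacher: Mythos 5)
Your proof is correct, and it takes a more local route than the paper's. Both arguments begin identically: normalise $u=\infty$ using the $\textnormal{SL}_2(\mathbb{Z})$-invariance of $\kappa$ and $\Delta$, reducing the claim to $\kappa(\infty,c/d)=|d|$. The paper then finishes in one stroke by appealing to the global fact that every vertex of $\mathscr{F}$ between two neighbours $p/q$ and $r/s$ arises by iterated Farey addition, whose mediant $(p+r)/(q+s)$ has denominator $q+s$, and asserting that Conway--Coxeter counting records exactly these denominators. You instead run an induction along the dual tree of the apeirogon, using Lemma~\ref{lem2} to identify the two triangles flanking an edge as having third vertices with denominators $D_2+D_3$ and $|D_2-D_3|$, and a secondary induction to show the already-present vertex always sits in the difference slot, so the newly attached vertex is the mediant. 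That sign bookkeeping is precisely the point the paper's one-line justification glosses over (why the counting never lands on the co-mediant), so your argument is more self-contained; it also comes close to supplying the well-definedness of $\kappa$ that the paper asserts without proof. Two small points you handle implicitly but correctly: when one endpoint of the crossed edge is $\infty$ the sum and difference coincide and the step is automatic (your $T_0$ check covers this), and the third vertex of each newly attached triangle really is new, since by Lemma~\ref{lemA} it lies in the arc of $\mathbb{R}_\infty$ cut off by the crossed edge, which contains no previously reached vertices.
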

\begin{proof}
Observe first that both $\kappa$ and $\Delta$ are invariant under $\text{SL}_2(\mathbb{Z})$, in the sense that 
\[
\kappa(g(u),g(v))=\kappa(u,v)\quad\text{and}\quad \Delta(g(u),g(v))=\Delta(u,v),
\]
for $g\in\text{SL}_2(\mathbb{Z})$, where $u$ and $v$ are vertices of the triangulated apeirogon $\mathscr{A}$. Hence we can assume that $u=\infty$.

Let us write $v=a/b$ as a reduced rational with $b>0$. Then $\Delta(\infty,v)=b$. We will prove that $\kappa(\infty,v)=b$ also.

All vertices adjacent to $u=\infty$ have the form $m/1$, for $m\in\mathbb{Z}$. Reduced rationals for other vertices in the Farey graph can be obtained by Farey addition: if $p/q$ and $r/s$ are neighbours in $\mathscr{F}$, then the unique vertex $w$ of $\mathscr{F}$ between $p/q$ and $r/s$ on the real line for which $w$, $p/q$ and $r/s$ form a triangle in $\mathscr{F}$ is $w=(p+r)/(q+s)$. It is well known (and reasonably straightforward to prove) that all vertices of the Farey graph can be obtained in this way. Conway--Coxeter counting simply records the denominators of this repeated process of Farey addition, so $\kappa(\infty,v)$ is equal to the denominator of $a/b$, namely $b$.
\end{proof}

In the previous section we described how to construct a triangulated apeirogon from a clockwise bi-infinite path. Now we discuss how to obtain a triangulated apeirogon from an element $(\gamma,\delta)$ of $(\mathscr{P}\times\mathscr{P})^+$ -- a pair of clockwise bi-infinite paths. First we carry out the procedure from the  previous section to give two triangulated apeirogons $\mathscr{A}_\gamma$ and $\mathscr{A}_\delta$, the first for $\gamma$ and the second for $\delta$. It will help to think of these apeirogons as infinite connected subtrees of the dual graph of the Farey graph. Consider the unique minimal path in the dual graph connecting $\mathscr{A}_\gamma$ to $\mathscr{A}_\delta$ (which is empty if $\mathscr{A}_\gamma$ and $\mathscr{A}_\delta$ share a common triangle). We define a new triangulated apeirogon $\mathscr{A}$ to consist of the vertices and edges from the collection of all triangles in $\mathscr{A}_\gamma$, $\mathscr{A}_\delta$ and this minimal path. One can verify that $\mathscr{A}$ is indeed a triangulated apeirogon, and none of the vertices of $\mathscr{A}$ lie in-between any two consecutive vertices of $\gamma$ or any two consecutive vertices of $\delta$ (in the clockwise sense). 

Using a result similar to Theorem~\ref{thmXYZ} it can be shown that the vertices of the triangulated apeirogon $\mathscr{A}$ can be split into four clockwise paths: $\gamma$ and $\delta$, and two other paths $\alpha$ and $\beta$, each of which may be empty, finite, half-infinite or bi-infinite. The paths themselves occur in the order $\gamma, \alpha,\delta,\beta$ clockwise in $\mathbb{R}_\infty$, and the only accumulation points of vertices of $\mathscr{A}$ are the four limit points $\gamma_{-\infty}$, $\gamma_\infty$, $\delta_{-\infty}$ and $\delta_\infty$. Precisely which configuration occurs depends on which if any of the four limit points are equal, and whether each one is rational or irrational.

There are many cases, and rather than discussing them all in detail, we supply a single illustrative example. (See \cite{BeHoJo2017} for a more-detailed look at the combinatorial configurations, from a different perspective.) Consider the paths
\[
\gamma =\left\langle\,\dotsc ,-\tfrac4{3},-\tfrac32,-2,\infty,2,\tfrac32,\tfrac4{3},\dotsc \right\rangle
\quad\text{and}\quad 
\delta = \left\langle\,\dotsc ,\tfrac8{13},\tfrac35,\tfrac12,0,-\tfrac12,-\tfrac35,-\tfrac8{13},\dotsc \right\rangle
\]
in Figure~\ref{fig17}(a). Here $\gamma_{-\infty}=-1$ and $\gamma_\infty=1$ are both rational, and $\delta_{-\infty}=\tfrac12(\sqrt5-1)$ and $\delta_\infty=-\tfrac12(\sqrt5-1)$ are both irrational. Figure~\ref{fig17}(b) shows the corresponding triangulated apeirogon $\mathscr{A}$. Since $\delta_\infty$ is irrational and $\gamma_{-\infty}$ is rational, the vertices of $\mathscr{A}$ in between $\delta_\infty$ and $\gamma_{-\infty}$ form a half-infinite path with backward limit $\delta_\infty$ and (forward) final vertex $\gamma_{-\infty}$.  Likewise, the vertices of $\mathscr{A}$ in-between $\gamma_\infty$ and $\delta_{-\infty}$  form a half-infinite path with initial vertex $\gamma_\infty$ and forward limit~$\delta_{-\infty}$.

\begin{figure}[ht]
\centering
\begin{tikzpicture}[scale=2.5]

\begin{scope}
\discfareygraph[thin,grey]{4};

\node[xshift=-2pt] at (0:\radplus) {$1$};
\node[] at (180:\radplus) {$-1$};
\node[gammacol,yshift=-3pt] at (90:\radplus) {$\infty$};

\node[gammacol] at ({2*atan(2)-90}:\radplus) {$2$};
\node[gammacol] at ({-2*atan(2)-90}:\radplus) {$-2$};
\node[gammacol] at ({2*atan(3/2)-90}:\radplus) {$\tfrac32$};
\node[gammacol] at ({-2*atan(3/2)-90}:\radplus) {$-\tfrac32$};

\foreach \n in {1,...,20}{
	\hgline[gammacol,thick](0,0)(2*atan((\n-1)/\n)+90:2*atan(\n/(\n+1))+90:\rad);
	\hgline[gammacol,thick](0,0)(-2*atan(\n/(\n+1))+90:-2*atan((\n-1)/\n)+90:\rad);
}

\dirhgline[gammacol,draw=none,thick](0.5,0.8,24)(90:2*atan(1/2)+90:\rad);
\dirhgline[gammacol,draw=none,thick](0.5,0.4,60)(2*atan(1/2)+90:2*atan(2/3)+90:\rad);
\dirhgline[gammacol,draw=none,thick](0.48,0.8,-25)(-2*atan(1/2)+90:90:\rad);
\dirhgline[gammacol,draw=none,thick](0.5,0.4,-58)(-2*atan(2/3)+90:-2*atan(1/2)+90:\rad);

 \node[gammacol] at (130:0.56*\rad) {$\gamma$};

\draw[thin] (-26.565:\rad) -- ++(0.15,0) node[right,xshift=-3pt,yshift=0pt] {{\scriptsize$\tfrac12(\sqrt5-1)$}};
\draw[thin] (206.565:\rad) -- ++(-0.15,0) node[left,xshift=3pt,yshift=0pt] {{\scriptsize$-\tfrac12(\sqrt5-1)$}};

\node[deltacol,yshift=1pt] at (-90:\radplus) {$0$};
\node[deltacol,xshift=-3pt,yshift=-2pt] at ({2*atan(1/2)-90}:\radplus) {$\tfrac12$};
\node[deltacol,yshift=-2pt] at ({-2*atan(1/2)-90}:\radplus) {$-\tfrac12$};
\node[deltacol,xshift=-3pt,yshift=-2pt] at ({2*atan(3/5)-90}:\radplus) {$\tfrac35$};
\node[deltacol,yshift=-2pt] at ({-2*atan(3/5)-90}:\radplus) {$-\tfrac35$};

\def\fib{{0,1,1,2,3,5,8,13,21,34,55,89,144,233}}
\foreach \n in {0,...,4}{
	\hgline[deltacol,thick](0,0)(2*atan(\fib[2*\n]/\fib[2*\n+1])-90:2*atan(\fib[2*\n+2]/\fib[2*\n+3])-90:\rad);
	\hgline[deltacol,thick](0,0)(-2*atan(\fib[2*\n+2]/\fib[2*\n+3])-90:-2*atan(\fib[2*\n]/\fib[2*\n+1])-90:\rad);
}

\dirhgline[deltacol,draw=none,thick](0.5,0.8,204)(-90:2*atan(1/2)-90:\rad);
\dirhgline[deltacol,draw=none,thick](0.55,0.3,235)(2*atan(1/2)-90:2*atan(3/5)-90:\rad);
\dirhgline[deltacol,draw=none,thick](0.48,0.8,155)(-2*atan(1/2)-90:-90:\rad);
\dirhgline[deltacol,draw=none,thick](0.52,0.3,122)(-2*atan(3/5)-90:-2*atan(1/2)-90:\rad);

\node[deltacol] at (-50:0.56*\rad) {$\delta$};

\node at (-1.45,-1.15) {(a)}; 
\end{scope}

\begin{scope}[xshift=2.9cm]
\draw[thin,grey] (0,0) circle(\rad);

\node[xshift=-2pt] at (0:\radplus) {$1$};
\node[] at (180:\radplus) {$-1$};

\foreach \n in {1,...,20}{
	\hgline[black](0,0)(2*atan((\n-1)/\n)+90:180:\rad);
	\hgline[black](0,0)(0:-2*atan((\n-1)/\n)+90:\rad);
}

\foreach \a/\b in {0/90,0/270,90/-90,90/180,180/270,-36.87/0,-36.87/-22.62,-22.62/0,-28.07/-22.62,180/216.87,180/202.62,202.62/216.87,202.62/208.07}{
	\hgline[black](0,0)(\a:\b:\rad);
}
\foreach \a/\b in {-28.07/-25.99,-26.78/-25.99,-25.99/-22.62,205.99/208.07,205.99/206.78,202.62/205.99}{
	\hgline[black](0,0)(\a:\b:\rad);
}

\foreach \n in {1,...,20}{
	\hgline[gammacol,thick](0,0)(2*atan((\n-1)/\n)+90:2*atan(\n/(\n+1))+90:\rad);
	\hgline[gammacol,thick](0,0)(-2*atan(\n/(\n+1))+90:-2*atan((\n-1)/\n)+90:\rad);
}
\dirhgline[gammacol,draw=none,thick](0.5,0.8,24)(90:2*atan(1/2)+90:\rad);
\dirhgline[gammacol,draw=none,thick](0.5,0.4,60)(2*atan(1/2)+90:2*atan(2/3)+90:\rad);
\dirhgline[gammacol,draw=none,thick](0.48,0.8,-25)(-2*atan(1/2)+90:90:\rad);
\dirhgline[gammacol,draw=none,thick](0.5,0.4,-58)(-2*atan(2/3)+90:-2*atan(1/2)+90:\rad);

\def\fib{{0,1,1,2,3,5,8,13,21,34,55,89,144,233}}
\foreach \n in {0,...,4}{
	\hgline[deltacol,thick](0,0)(2*atan(\fib[2*\n]/\fib[2*\n+1])-90:2*atan(\fib[2*\n+2]/\fib[2*\n+3])-90:\rad);
	\hgline[deltacol,thick](0,0)(-2*atan(\fib[2*\n+2]/\fib[2*\n+3])-90:-2*atan(\fib[2*\n]/\fib[2*\n+1])-90:\rad);
}
\dirhgline[deltacol,draw=none,thick](0.5,0.8,204)(-90:2*atan(1/2)-90:\rad);
\dirhgline[deltacol,draw=none,thick](0.55,0.3,235)(2*atan(1/2)-90:2*atan(3/5)-90:\rad);
\dirhgline[deltacol,draw=none,thick](0.48,0.8,155)(-2*atan(1/2)-90:-90:\rad);
\dirhgline[deltacol,draw=none,thick](0.52,0.3,122)(-2*atan(3/5)-90:-2*atan(1/2)-90:\rad);

\draw[thin] (-26.565:\rad) -- ++(0.05,-0.2) node[below,xshift=6pt,yshift=2pt] {{\scriptsize$\tfrac12(\sqrt5-1)$}};
\draw[thin] (206.565:\rad) -- ++(-0.05,-0.2) node[below,xshift=-8pt,yshift=2pt] {{\scriptsize$-\tfrac12(\sqrt5-1)$}};

\hgline[alphacol,thick](0,0)(-22.65:0:\rad);
\hgline[betacol,thick](0,0)(180:202.65:\rad);
\hgline[alphacol,thick](0,0)(-25.99:-22.62:\rad);
\hgline[betacol,thick](0,0)(202.62:205.99:\rad);
\hgline[alphacol,thick](0,0)(-26.78:-25.99:\rad);
\hgline[betacol,thick](0,0)(205.99:206.78:\rad);
\dirhgline[alphacol,draw=none,thick](0.48,0.6,265)(-22.65:0:\rad);
\dirhgline[betacol,draw=none,thick](0.52,0.6,95)(180:202.65:\rad);

 \node[gammacol] at (130:0.56*\rad) {$\gamma$};
\node[deltacol] at (-50:0.56*\rad) {$\delta$};
\node at (-1.25,-1.15) {(b)}; 
\end{scope}

\end{tikzpicture}
\caption{A pair of clockwise bi-infinite paths in $(\mathscr{P}\times\mathscr{P})^+$ and the corresponding triangulated apeirogon}
\label{fig17}
\end{figure}

These triangulated apeirogons constructed from pairs of clockwise bi-infinite paths are of the same type as the triangulations considered by Bessenrodt, Holm and J{\o}rgensen in \cite{BeHoJo2017}. Those authors proved that every positive $\text{SL}_2$-tiling can be obtained by choosing such a triangulation and applying Conway--Coxeter counting. Using Lemma~\ref{lemS} and Theorem~\ref{thm2} we can see (in essence, if not with full rigour) that the method for classifying positive  $\text{SL}_2$-tilings using the Farey graph is equivalent to that of \cite{BeHoJo2017}; the geometry and numerics of the Farey graph provide a short cut for simplifying combinatorial arguments.

It was observed in \cite{BeHoJo2017} that any positive $\text{SL}_2$-tiling without any entries equal to $1$ has a unique smallest entry. We reprove that result here using the Farey graph. The following lemma is at the heart of the proof.

\begin{lemma}\label{lemZ}
Let $x$ and $y$ be adjacent vertices of $\mathscr{F}$, and let $u$ and $v$ be vertices that lie in distinct components of $\mathbb{R}_\infty-\{x,y\}$. Then $\Delta(u,x)<\Delta(u,v)$.
\end{lemma}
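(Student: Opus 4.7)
The plan is to reduce the lemma, via the $\textnormal{SL}_2(\mathbb{Z})$-invariance of $\Delta$, to the case where $x=\infty$ and $y=0$, and then verify the inequality directly.

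First I would recall (or re-derive in one line) that $\Delta(A(u),A(v))=\Delta(u,v)$ for every $A\in\textnormal{SL}_2(\mathbb{Z})$; this was noted already inside the proof of Lemma~\ref{lemS}, and follows by writing $u=p/q$, $v=r/s$, $A=\begin{pmatrix}a&b\\c&d\end{pmatrix}$, and observing that $(ap+bq)(cr+ds)-(cp+dq)(ar+bs)=(ad-bc)(ps-qr)=ps-qr$. Since $x$ and $y$ are adjacent in $\mathscr{F}$ we have $\Delta(x,y)=1$, and a standard computation produces an element of $\textnormal{SL}_2(\mathbb{Z})$ sending $x$ to $\infty$ and $y$ to $0$. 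Both the hypotheses and the conclusion of the lemma are preserved by this change of coordinates, so I would assume from the outset that $x=\infty$ and $y=0$.

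After this reduction the two components of $\mathbb{R}_\infty-\{\infty,0\}$ are the positive and negative open rays, so $u$ and $v$ are nonzero rationals of opposite sign. Writing $u=a/b$ and $v=c/d$ as reduced rationals with $b,d>0$, one has $|a|,|c|,b,d\geq 1$, and $ad$ and $-bc$ share a common sign because $a$ and $c$ have opposite signs. Then $\Delta(u,x)=|a\cdot 0-b\cdot 1|=b$, while
\[
\Delta(u,v)=|ad-bc|=|a|d+b|c|\geq d+b\geq 1+b>b=\Delta(u,x),
\]
which gives the required strict inequality.

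No real obstacle is anticipated; the argument is dominated by the reduction step, after which the inequality is immediate from the bounds $|a|,|c|,b,d\geq 1$.
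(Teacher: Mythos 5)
Your proof is correct, but it takes a genuinely different route from the paper's. Both arguments begin by invoking the $\textnormal{SL}_2(\mathbb{Z})$-invariance of $\Delta$, but you normalise the \emph{edge}, sending $x\mapsto\infty$ and $y\mapsto 0$, whereas the paper normalises the \emph{observation point}, sending $u\mapsto\infty$. After the paper's normalisation, $\Delta(u,\cdot)$ becomes the denominator function, and the inequality is deduced from the structural fact that every rational strictly between two Farey neighbours arises by iterated Farey (mediant) addition starting from that pair, each step of which strictly increases denominators; this ties the lemma to the Conway--Coxeter counting theme of the section but leans on a nontrivial fact about the Farey tessellation that the paper only cites as ``remarked earlier.'' Your normalisation instead reduces the hypothesis to the statement that $u=a/b$ and $v=c/d$ are nonzero rationals of opposite sign, after which the key inequality is the purely arithmetic observation $|ad-bc|=|a|d+b|c|\geq d+b>b=\Delta(u,x)$, valid because $ad$ and $-bc$ share a sign. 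This is entirely self-contained and arguably more elementary, at the cost of obscuring the mediant picture. Your sign analysis and the verification that the hypotheses and conclusion are preserved under the coordinate change are both sound, so the argument stands as written.
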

\begin{proof}
By applying an element of $\text{SL}_2(\mathbb{Z})$ we can assume that $u=\infty$. Then $x$ and $y$ are rationals, and let us suppose that $x<y$, so $v$ lies inside the interval $(x,y)$ (the argument for $y<x$ is much the same). We remarked earlier that every rational inside $(x,y)$ can be obtained by repeatedly applying Farey addition, starting from the pair $x$ and $y$. Observe that the Farey sum of two rationals with positive denominators is a rational with a larger denominator. Thus, writing $x=a/b$ and $v=c/d$ (in reduced form, with $b,d>0$) we see that $b<d$. The result follows, since $\Delta(u,x)=b$ and $\Delta(u,v)=d$. 
\end{proof}

\begin{theorem}\label{thmJ}
Let $\mathbf{M}$ be a positive $\textnormal{SL}_2$-tiling without any entries equal to 1. Then $\mathbf{M}$ has a unique smallest entry.
\end{theorem}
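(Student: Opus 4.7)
The plan is to translate the problem into the Farey-graph picture of Theorem~\ref{thm2}: choose clockwise bi-infinite paths $\gamma,\delta$ in $\mathscr{F}$ with vertices $v_i=a_i/b_i$ and $w_j=c_j/d_j$, normalised so that $a_ib_{i+1}-a_{i+1}b_i=1$ and $c_jd_{j+1}-c_{j+1}d_j=1$, so that $m_{i,j}=\Delta(v_i,w_j)$. The hypothesis that $\mathbf{M}$ has no entry equal to $1$ says that no vertex of $\gamma$ equals or is adjacent in $\mathscr{F}$ to any vertex of $\delta$, so the minimum $m$ of $\mathbf{M}$ satisfies $m\geq 2$.

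The central ingredient is a row-and-column uniqueness lemma: for each fixed vertex $w$ of $\delta$, the function $v\mapsto\Delta(v,w)$ on vertices of $\gamma$ attains its minimum at a unique vertex (and symmetrically, interchanging $\gamma$ and $\delta$). I would prove this by applying an element of $\textnormal{SL}_2(\mathbb{Z})$ to send $w$ to $\infty$, so that $\Delta(v_i,\infty)=|b_i|$, and normalising signs so that $\gamma$ is a strictly decreasing sequence of positive-denominator fractions with each $b_i\geq 2$. If the minimum $m$ were attained at two indices $i_0<i_1$, the finite decreasing subpath $v_{i_0},\dots,v_{i_1}$ would run from $a_{i_0}/m$ to $a_{i_1}/m$ through vertices of denominator $\geq m$. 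But the distinct reduced fractions $a_{i_0}/m,a_{i_1}/m$ are not Farey-adjacent when $m\geq 2$, so standard Farey theory produces a pair of $F_{m-1}$-neighbours $\{x,y\}$ (both of denominator $<m$, hence Farey-adjacent in $\mathscr{F}$) separating them in $\mathbb{R}_\infty$; Lemma~\ref{lemA} then forces the subpath to pass through $x$ or $y$, a contradiction.

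Granted this, if two distinct positions $(i_0,j_0)$ and $(i_1,j_1)$ both achieve $m$, neither index can coincide, so both $i_0\neq i_1$ and $j_0\neq j_1$. Writing $m_{i,j}=(a_i\;b_i)J(c_j\;d_j)^T$ and using $\det J=1$, the (possibly non-consecutive) $2\times 2$ submatrix at rows $i_0,i_1$ and columns $j_0,j_1$ has determinant
\[
\bigl(a_{i_0}b_{i_1}-a_{i_1}b_{i_0}\bigr)\bigl(c_{j_0}d_{j_1}-c_{j_1}d_{j_0}\bigr)=\pm\Delta(v_{i_0},v_{i_1})\Delta(w_{j_0},w_{j_1})=\pm pq
\]
with $p,q\geq 1$. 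Setting $a=m_{i_0,j_1}$, $b=m_{i_1,j_0}$, both $\geq m+1$ by the uniqueness lemma, we obtain $ab-m^2=pq\geq 2m+1$. In the smallest case $|i_1-i_0|=|j_1-j_0|=1$ the submatrix is consecutive, so $pq=1$, immediately contradicting $2m+1\geq 5$. For larger rectangles I would choose a counterexample with $|i_1-i_0|+|j_1-j_0|$ minimal and combine the consecutive unimodular relations along two adjacent bounding rows; for instance when $|i_1-i_0|=1$ and $|j_1-j_0|=2$ with inner entries $a',c'$, the relations $mc'-a'b=1$ and $a'm-ac'=1$ yield $(ab-m^2)a'c'=-m(a'+c')+1$, whose left side is positive while its right side is negative under the uniform bound $a,b,a',c'\geq m+1$.

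The hard part will be packaging the general mixed case uniformly: the row-and-column lemma is a clean application of Lemma~\ref{lemA} and standard Farey theory, but the mixed case requires either a careful minimality induction together with iterated consecutive-determinant identities of the above type, or a slicker geometric argument using the triangulated apeirogon of $(\gamma,\delta)$ from Section~\ref{sec9}, in which the assumption $m\geq 2$ forces a nonempty bridge of triangles between $\mathscr{A}_\gamma$ and $\mathscr{A}_\delta$ in the dual tree of $\mathscr{F}$, letting Conway--Coxeter counting via Lemma~\ref{lemS} together with the tree structure locate a unique closest pair of vertices.
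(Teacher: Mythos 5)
Your reduction to the Farey-graph picture and your row-and-column uniqueness lemma are both sound: sending a fixed vertex $w$ of $\delta$ to $\infty$, noting that every vertex of $\gamma$ then has denominator at least the column minimum $m\geq 2$, and separating two hypothetical minimizers by the Farey parents of one of them (which are adjacent to each other and have denominators less than $m$) is a correct application of Lemma~\ref{lemA}. The consecutive $2\times 2$ determinant also correctly kills the case $|i_1-i_0|=|j_1-j_0|=1$. But the proof is not complete, and the missing piece is exactly where the content of the theorem lies: the mixed case in which the minimum is attained at $(i_0,j_0)$ and $(i_1,j_1)$ with $i_0\neq i_1$, $j_0\neq j_1$ and the rectangle large. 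There your identity only gives $\Delta(v_{i_0},v_{i_1})\,\Delta(w_{j_0},w_{j_1})=m_{i_0,j_1}m_{i_1,j_0}-m^2\geq 2m+1$, which is no contradiction since the left-hand factors are unbounded; the proposed ``minimality induction with iterated consecutive-determinant identities'' is not carried out, and it is not clear it closes, because shrinking the rectangle does not preserve the hypothesis that its corners are global (or even row/column) minima, so the bounds $a',c'\geq m+1$ used in your $1\times 2$ computation are not available at intermediate stages. The apeirogon alternative is likewise only a gesture.

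For comparison, the paper closes this in one stroke with a local separation argument that subsumes your lemma: normalise so the minimum is at $m_{0,0}$ and $w_0=\infty$; since $m_{0,0}>1$ the vertex $v_0$ is not an integer, so it has Farey parents $x<v_0<y$, adjacent to each other and to $v_0$, with smaller denominators. Minimality forbids either path from visiting $x$ or $y$ (for $\gamma$ this would lower $\Delta(\cdot,w_0)$, for $\delta$ it would produce an entry $1$ in row $0$), so by Lemma~\ref{lemA} the whole of $\gamma$ lies in $(x,y)$ and the whole of $\delta$ lies in its complement. Then for any $i\neq 0$ and any $j$, the adjacent pair $\{x,v_0\}$ or $\{v_0,y\}$ separates $v_i$ from $w_j$, and Lemma~\ref{lemZ} gives $\Delta(v_i,w_j)>\Delta(v_0,w_j)\geq m_{0,0}$; the symmetric argument in $j$ finishes. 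You would do well to replace your case analysis by this single use of the Farey parents of the minimizing vertex.
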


\begin{proof}
By shifting the indices of entries of $\mathbf{M}$ we can assume that $\mathbf{M}$ assumes its least value at $m_{0,0}$. We use the usual notation for paths $\gamma$ and $\delta$ with $\widetilde{\Phi}(\gamma,\delta)=\pm\mathbf{M}$. By applying an element of $\text{SL}_2(\mathbb{Z})$ we can assume that the zeroth vertex $w_0$ of $\delta$ is $\infty$.

Let $v_0$ be the zeroth vertex of $\gamma$. Since $m_{0,0}>1$, it follows that $v_0$ and $w_0$ are not adjacent in $\mathscr{F}$, so $v_0$ is not an integer. Hence $v_0$ has precisely two neighbours $x$ and $y$ in $\mathscr{F}$ with denominators of smaller magnitude than $v_0$. These two rationals are called the \emph{Farey parents} of $v_0$ in some sources, such as \cite{BeHoSh2012}. They lie on either side of $v_0$ -- let us say that $x<v_0<y$ -- and they are themselves neighbours in $\mathscr{F}$.
 
Neither of the paths $\gamma$ nor $\delta$ can pass through $x$ or $y$, because if (say) $\gamma$ did then there would be a vertex $v_i$ with $\Delta(v_i,w_0)<\Delta(v_0,w_0)$, by Lemma~\ref{lemZ}. It follows from Lemma~\ref{lemA} that $\gamma$ lies within the interval $(x,y)$ and $\delta$ lies in the complement in $\mathbb{R}_\infty$ of this interval.

Consider now any vertex $v_i$ of $\gamma$ other than $v_0$ and any vertex $w_j$ of $\delta$. Then $v_i$ lies in either  $(x,v_0)$ or $(v_0,y)$. In both cases we can apply Lemma~\ref{lemZ} to see that $\Delta(v_i,w_j)>\Delta(v_0,w_j)$. This shows that the minimum value of $\Delta(v_i,w_j)$ is only achieved when $i=0$. Applying the same argument with the roles of $v_i$ and $w_j$ reversed shows that the minimum is only achieved at $i=j=0$, as required.
\end{proof}

In contrast to Theorem~\ref{thmJ}, a positive $\text{SL}_2$-tiling can have infinitely many entries 1. We characterise the $\text{SL}_2$-tilings of that type using the Farey graph in the next two results.

\begin{lemma}\label{lemU}
Let $\mathbf{M}$ be a positive $\textnormal{SL}_2$-tiling, and suppose that $m_{r,s}=1$ for some integers $r$ and $s$. If $i>r$ and $j>s$, or $i<r$ and $j<s$, then $m_{i,j}\neq 1$.
\end{lemma}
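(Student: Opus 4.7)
The plan is to pass through Theorem~\ref{thm2}: represent $\pm\mathbf{M}$ as $\widetilde{\Phi}(\gamma,\delta)$ for some $(\gamma,\delta)\in(\PATHP\times\PATHP)^+$, with vertices $v_i=a_i/b_i$ of $\gamma$ and $w_j=c_j/d_j$ of $\delta$ chosen so that $m_{i,j}=a_id_j-b_ic_j$. Positivity of $\mathbf{M}$ gives $m_{a,b}=\Delta(v_a,w_b)$ (as in Section~\ref{sec3}), so $m_{a,b}=1$ is equivalent to $v_a\sim w_b$ in $\mathscr{F}$, while $m_{a,b}>0$ forces $v_a\neq w_b$ for every $a,b$. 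I will prove the case $i>r$, $j>s$; the other case is symmetric.

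Assume, for contradiction, that $m_{i,j}=1$ for some $i>r$, $j>s$, so that $\{v_i,w_j\}$ is an edge of $\mathscr{F}$. The four vertices $v_r,v_i,w_s,w_j$ are pairwise distinct: distinctness within each path is built into the definition of a clockwise bi-infinite path, and no $v$ equals any $w$ by positivity. Apply Lemma~\ref{lemA} with $x=v_r$, $y=w_s$ to the one-edge path $\langle v_i,w_j\rangle$, whose vertices are distinct from both $v_r$ and $w_s$. The lemma forces $v_i$ and $w_j$ to lie in the same component of $\mathbb{R}_\infty-\{v_r,w_s\}$.

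The contradiction comes from the clockwise structure of $(\PATHP\times\PATHP)^+$. The four limits $\gamma_{-\infty},\gamma_\infty,\delta_{-\infty},\delta_\infty$ appear in clockwise order on $\mathbb{R}_\infty$, and the vertices of each path are clockwise-ordered between its respective limits. Traversing $\mathbb{R}_\infty$ clockwise starting at $v_r$, one therefore encounters $v_{r+1},v_{r+2},\ldots$ converging to $\gamma_\infty$, then the arc past $\delta_{-\infty}$, then $\ldots,w_{s-2},w_{s-1}$, before arriving at $w_s$. Hence the open arc $A_+$ of $\mathbb{R}_\infty-\{v_r,w_s\}$ running clockwise from $v_r$ to $w_s$ contains every $v_i$ with $i>r$. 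The analogous analysis of the complementary arc $A_-$ running clockwise from $w_s$ to $v_r$ shows that every $w_j$ with $j>s$ lies in $A_-$. So $v_i\in A_+$ and $w_j\in A_-$ are in different components, contradicting the previous paragraph.

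The main obstacle is the clockwise bookkeeping in the third paragraph: one must convert the order-of-limits hypothesis into a concrete statement about which arc contains which tail of $\gamma$ and $\delta$, handling cleanly the possible coincidences $\gamma_\infty=\delta_{-\infty}$ or $\gamma_{-\infty}=\delta_\infty$. Once this is done, the non-crossing of Farey edges, encoded by Lemma~\ref{lemA}, closes the argument immediately.
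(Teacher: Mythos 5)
Your proof is correct and follows essentially the same route as the paper: both represent $\pm\mathbf{M}$ by a pair of clockwise paths, observe that $v_r,v_i,w_s,w_j$ occur in clockwise order, and conclude that the edge $v_iw_j$ would have to cross the edge $v_rw_s$, which is impossible. The only cosmetic difference is that you formalise the non-crossing of Farey edges via Lemma~\ref{lemA}, whereas the paper invokes it directly.
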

\begin{proof}
Let $(\gamma,\delta)$ be a pair of bi-infinite clockwise paths in $(\mathscr{P}\times\mathscr{P})^+$ (with the usual notation) such that $m_{i,j}=a_id_j-b_ic_j$. Then $v_r$ and $w_s$ are adjacent in $\mathscr{F}$. Suppose that $i>r$ and $j>s$. Observe that $v_r$, $v_i$, $w_s$, $w_j$ are in clockwise order in $\mathbb{R}_\infty$. It follows that $v_i$ and $w_j$ cannot be adjacent in $\mathscr{F}$, for if they were then the edge of $\mathscr{F}$ between those two vertices and  the edge between $v_r$ and $w_s$ would intersect. Hence $m_{i,j}\neq 1$. For similar reasons we see that $m_{i,j}\neq 1$ when $i<r$ and $j<s$.
\end{proof} 

Lemma~\ref{lemU} says that if there is an entry 1 at position $(r,s)$, then there are no other entries 1 either upwards and leftwards, or downwards and rightwards, of that position. We also observe  that there are only finitely many entries 1 in any row or column of $\mathbf{M}$, for if there were infinitely many then the vertices of one of the paths $\gamma$ and $\delta$ would have to accumulate at a single vertex of the other. 

A consequence of these observations is that a positive $\textnormal{SL}_2$-tiling $\mathbf{M}$ has infinitely many entries 1 if and only if 
 there are increasing sequences of positive integers $i_1,i_2,\dotsc$ and $j_1,j_2,\dotsc$ such that either $v_{i_k}\sim w_{-j_k}$ for  $k=1,2,\dotsc$, or $v_{-i_k}\sim w_{j_k}$ for $k=1,2,\dotsc$. The class of $\text{SL}_2$-tilings satisfying both these conditions was studied in \cite{HoJo2013}.

\begin{theorem}\label{thm88}
Let $\mathbf{M}$ be a positive $\textnormal{SL}_2$-tiling. The tiling $\mathbf{M}$ has infinitely many entries $1$ if and only if either $\gamma_\infty$ and $\delta_{-\infty}$ are equal and irrational, or $\gamma_{-\infty}$ and $\delta_\infty$ are equal and irrational.
\end{theorem}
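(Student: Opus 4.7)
The key observation is that for a positive $\text{SL}_2$-tiling, $m_{i,j}=1$ holds precisely when $v_i$ and $w_j$ are adjacent in $\mathscr{F}$; indeed $|m_{i,j}|=|a_id_j-b_ic_j|=1$ iff $v_i\sim w_j$, and positivity of $\mathbf{M}$ fixes the sign. So the theorem translates to a statement about how often the paths $\gamma$ and $\delta$ come within Farey distance $1$ of one another.

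For the forward direction, Lemma~\ref{lemU} and the remark following it imply that $S=\{(i,j):m_{i,j}=1\}$ is an antichain in $\mathbb{Z}^2$ under the componentwise order, with only finitely many members in each row and column. If $S$ is infinite then a routine sub-sequence argument produces $(i_k,j_k)\in S$ with either $i_k\to+\infty,\ j_k\to-\infty$ or $i_k\to-\infty,\ j_k\to+\infty$. In the first case $v_{i_k}\sim w_{j_k}$ with $v_{i_k}\to\gamma_\infty$ and $w_{j_k}\to\delta_{-\infty}$; the disc-model fact invoked in Theorems~\ref{thmXYZ} and~\ref{thm99} (only finitely many edges of $\mathscr{F}$ exceed any prescribed Euclidean diameter) then forces $\gamma_\infty=\delta_{-\infty}=:\alpha$. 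The second case gives $\gamma_{-\infty}=\delta_\infty$ analogously. To rule out $\alpha$ being rational I adapt the final step of the proof of Theorem~\ref{thm99}: send $\alpha$ to $\infty$ by an element of $\text{SL}_2(\mathbb{Z})$; the clockwise ordering of the four limit points then forces $v_{i_k}$ and $w_{j_k}$ to approach $\infty$ from opposite sides of the real line. Writing the adjacent pair in reduced form as $p_k/q_k$ and $r_k/s_k$ with positive denominators, $|p_ks_k-q_kr_k|=1$ gives $|v_{i_k}-w_{j_k}|=1/(q_ks_k)\leq 1$, contradicting divergence to $\pm\infty$.

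For the converse, suppose $\gamma_\infty=\delta_{-\infty}=\alpha$ is irrational, the other case being symmetric. Lemma~\ref{lemT} furnishes rationals $x_n\uparrow\alpha$ and $y_n\downarrow\alpha$ with $x_n\sim y_n$ in $\mathscr{F}$. Clockwise orientation forces the forward tail of $\gamma$ to approach $\alpha$ from one side and the backward tail of $\delta$ from the other; I take $v_i\in(\alpha,y_n)$ for all sufficiently large $i$ and $w_j\in(x_n,\alpha)$ for all sufficiently negative $j$. For each large $n$, a finite sub-path of $\gamma$ runs from a vertex outside $(x_n,y_n)$ (one close to $\gamma_{-\infty}$) into $(\alpha,y_n)\subset(x_n,y_n)$, so by Lemma~\ref{lemA} it must cross $\{x_n,y_n\}$; since every vertex of $\gamma$ lies on the $y_n$-side of $\alpha$, this crossing must be at $y_n$. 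Symmetrically $\delta$ passes through $x_n$, yielding indices $i_n\to\infty$ and $j_n\to\infty$ with $v_{i_n}=y_n\sim x_n=w_{-j_n}$, hence $m_{i_n,-j_n}=1$ for infinitely many $n$.

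The main obstacle will be the orientation bookkeeping: extracting from the clockwise-order hypothesis on $\gamma_{-\infty},\gamma_\infty,\delta_{-\infty},\delta_\infty$ the precise statement that the two sub-sequences (or the two tails) sit on opposite sides of the common limit, both before and after any $\text{SL}_2(\mathbb{Z})$-normalisation. Once this is unpacked, the proof closes via the diophantine bound $|p/q-r/s|=1/(qs)$ for adjacent Farey vertices together with the Farey-graph geometry already established in the paper.
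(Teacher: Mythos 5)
Your proposal is correct and follows essentially the same route as the paper: it converts entries $1$ into Farey adjacency of vertices of $\gamma$ and $\delta$, uses Lemma~\ref{lemU} together with the finiteness of $1$s in each row and column to extract index sequences diverging in opposite directions, equates the limits via the finitely-many-large-edges fact, rules out rationality by normalising the common limit to $\infty$ exactly as in the proof of Theorem~\ref{thm99}, and proves the converse via Lemmas~\ref{lemT} and~\ref{lemA}. Your explicit bound $|p/q-r/s|=1/(qs)\leq 1$ for adjacent vertices just makes precise the step the paper dismisses as "impossible", so the two arguments coincide in substance.
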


The proof is similar to that of Theorem~\ref{thm99}, so we hasten through some of the details.

\begin{proof}
Suppose first that $\mathbf{M}$ has infinitely many entries 1. By Lemma~\ref{lemU} (and the fact that there can be only finitely many 1s in any row or column) we see that there are increasing sequences of positive integers $i_1,i_2,\dotsc$ and $j_1,j_2,\dotsc$ such that either $v_{i_k}\sim w_{-j_k}$ for $k=1,2,\dotsc$, or $v_{-i_k}\sim w_{j_k}$ for $k=1,2,\dotsc$. In the former case the sequences $(v_{i_k})$ and $(w_{-j_k})$ must converge to the same irrational limit, so $\gamma_{\infty}$ and $\delta_{-\infty}$ are equal and irrational. In the latter case $\gamma_{-\infty}$ and $\delta_\infty$ are equal and irrational.

For the converse, suppose that $\gamma_\infty$ and $\delta_{-\infty}$ are equal to some irrational $\alpha$ (the other case can be dealt with in a similar manner). Since $\alpha$ is irrational, we can find infinitely many pairs of neighbouring vertices of $\mathscr{F}$ that lie on either side of $\alpha$ and accumulate at $\alpha$. The paths $\gamma$ and $\delta$ must pass through almost every one of these vertices, so we can find increasing sequences of positive integers $i_1,i_2,\dotsc$ and $j_1,j_2,\dotsc$ with $v_{i_k}\sim w_{-j_k}$, for $k=1,2,\dotsc$. Then $m_{i_k,-j_k}=\Delta(v_{i_k},w_{-j_k})=1$, for each positive integer $k$, so $\mathbf{M}$ has infinitely many entries $1$.
\end{proof}






\begin{bibdiv}
\begin{biblist}

\bib{AsReSm2010}{article}{
   author={Assem, Ibrahim},
   author={Reutenauer, Christophe},
   author={Smith, David},
   title={Friezes},
   journal={Adv. Math.},
   volume={225},
   date={2010},
   number={6},
   pages={3134--3165},
   issn={0001-8708}
}

\bib{BaPaTs2016}{article}{
   author={Baur, Karin},
   author={Parsons, Mark J.},
   author={Tschabold, Manuela},
   title={Infinite friezes},
   journal={European J. Combin.},
   volume={54},
   date={2016},
   pages={220--237},
}

\bib{BaPaTs2018}{article}{
   author={Baur, Karin},
   author={Parsons, Mark J.},
   author={Tschabold, Manuela},
   title={Corrigendum to `Infinite friezes' [European J. Combin. 54 (2016)
   220--237]},
   journal={European J. Combin.},
   volume={69},
   date={2018},
   pages={276--280},
}

\bib{BeHoSh2012}{article}{
   author={Beardon, A. F.},
   author={Hockman, M.},
   author={Short, I.},
   title={Geodesic continued fractions},
   journal={Michigan Math. J.},
   volume={61},
   date={2012},
   number={1},
   pages={133--150},
}

\bib{BeRe2010}{article}{
   author={Bergeron, Fran\c{c}ois},
   author={Reutenauer, Christophe},
   title={$SL_k$-tilings of the plane},
   journal={Illinois J. Math.},
   volume={54},
   date={2010},
   number={1},
   pages={263--300},
}

\bib{BeHoJo2017}{article}{
   author={Bessenrodt, Christine},
   author={Holm, Thorsten},
   author={J\o rgensen, Peter},
   title={All ${\rm SL}_2$-tilings come from infinite triangulations},
   journal={Adv. Math.},
   volume={315},
   date={2017},
   pages={194--245},
}

\bib{CaCh2006}{article}{
   author={Caldero, Philippe},
   author={Chapoton, Fr\'{e}d\'{e}ric},
   title={Cluster algebras as Hall algebras of quiver representations},
   journal={Comment. Math. Helv.},
   volume={81},
   date={2006},
   number={3},
   pages={595--616},
}

\bib{CoCo1973}{article}{
   author={Conway, J. H.},
   author={Coxeter, H. S. M.},
   title={Triangulated polygons and frieze patterns},
   journal={Math. Gaz.},
   volume={57},
   date={1973},
   number={401},
   pages={175--183},
}

\bib{Co1971}{article}{
   author={Coxeter, H. S. M.},
   title={Frieze patterns},
   journal={Acta Arith.},
   volume={18},
   date={1971},
   pages={297--310},
}

\bib{CuHo2019}{article}{
	author={Cuntz, Michael},
	author={Holm, Thorsten},
	title={Frieze patterns over integers and other subsets of the complex
		numbers},
	journal={J. Comb. Algebra},
	volume={3},
	date={2019},
	number={2},
	pages={153--188},
}

\bib{HoJo2013}{article}{
   author={Holm, Thorsten},
   author={J\o rgensen, Peter},
   title={${\rm SL}_2$-tilings and triangulations of the strip},
   journal={J. Combin. Theory Ser. A},
   volume={120},
   date={2013},
   number={7},
   pages={1817--1834},
}

\bib{Mo2015}{article}{
   author={Morier-Genoud, Sophie},
   title={Coxeter's frieze patterns at the crossroads of algebra, geometry
   and combinatorics},
   journal={Bull. Lond. Math. Soc.},
   volume={47},
   date={2015},
   number={6},
   pages={895--938},
}

\bib{MoOvTa2015}{article}{
   author={Morier-Genoud, Sophie},
   author={Ovsienko, Valentin},
   author={Tabachnikov, Serge},
   title={${\rm SL}_2(\mathbb{Z})$-tilings of the torus, Coxeter-Conway friezes
   and Farey triangulations},
   journal={Enseign. Math.},
   volume={61},
   date={2015},
   number={1-2},
   pages={71--92},
}

\bib{MoOv2019}{article}{
	author={Morier-Genoud, Sophie},
	author={Ovsienko, Valentin},
	title={Farey boat: continued fractions and triangulations, modular group
		and polygon dissections},
	journal={Jahresber. Dtsch. Math.-Ver.},
	volume={121},
	date={2019},
	number={2},
	pages={91--136},
}

\bib{Ov2018}{article}{
	author={Ovsienko, Valentin},
	title={Partitions of unity in $\textnormal{SL}(2,\mathbb{Z})$, negative continued fractions, and dissections of polygons},
	journal={Res. Math. Sci.},
	volume={5},
	date={2018},
	number={2},
	pages={Paper No. 21, 25},
}

\bib{Ts2019}{article}{
   author={Tschabold, Manuela},
   title={Arithmetic infinite friezes from punctured discs},
   journal={arXiv:1503.04352}
}

\end{biblist}
\end{bibdiv}

\end{document}